\documentclass[12pt]{article}
\usepackage{setspace}


\usepackage{inputenc}
\usepackage{graphicx,graphics}
\usepackage{supertabular}
\usepackage{amsmath,amscd,amsfonts,amssymb}
\usepackage{fancyhdr}
\usepackage{color}
\usepackage{verbatim}

\usepackage{array,supertabular}
\usepackage{float}
\usepackage{rotating}
\usepackage{lscape}

\addtolength{\hoffset}{-2cm}
\addtolength{\textwidth}{4cm}

\addtolength{\voffset}{-2cm}
\addtolength{\textheight}{90pt}
\addtolength{\topmargin}{-10pt}
\addtolength{\headsep}{20pt}
\addtolength{\headheight}{0pt}

\addtolength{\footskip}{10pt}
%

\usepackage{booktabs,caption,fixltx2e}
\usepackage[flushleft]{threeparttable}
\usepackage{amsthm}

\usepackage{dcolumn}
\newcolumntype{L}{D{.}{.}{2,5}}

\usepackage{chngpage} 

\newtheorem{theorem}{Theorem}[section]
\newtheorem{lemma}[theorem]{Lemma}

\newtheorem{proposition}[theorem]{Proposition}
\newtheorem{corollary}[theorem]{Corollary}

\newtheorem{assumption}{Assumption}

\newenvironment{definition}[1][Definition.]{\begin{trivlist}
\item[\hskip \labelsep {\bfseries #1}]}{\end{trivlist}}

\newenvironment{remark}[1][Remark.]{\begin{trivlist}
\item[\hskip \labelsep {\bfseries #1}]}{\end{trivlist}}

\def\mds{\medskip}
\def\Rb{{\mathbb R}}
\def\Pc{{\mathcal P}}
\def\Fc{{\mathcal P}}
\def\Nc{{\mathcal N}}

\long\def\symbolfootnote[#1]#2{\begingroup%
\def\thefootnote{\fnsymbol{footnote}}\footnotetext[#1]{#2}\footnotemark[#1]\endgroup}



\jot=7pt
\textheight 9in
\textwidth 6.3in
\topmargin -1.2cm
\hoffset -1.3cm

\makeatletter
\@addtoreset{equation}{section}

\makeatother

\newcounter{Fig}[figure]

\newcounter{Tab}[table]

  {\refstepcounter{Tab}
   \addtocounter{table}{1}
   \samepage\vspace{0.2cm}
   \centerline{#1} \nobreak
   \begin{verse}{Table \thesection.\arabic{Tab}:}
  }%
  {\end{verse} \vspace{0.2cm}}

\relax

\newcommand{\ZZ}{\mbox{\boldmath $Z$}}

\newcommand{\BB}{\mbox{\boldmath $B$}}

\newcommand{\MM}{\mbox{\boldmath $M$}}
\newcommand{\xx}{\mbox{\boldmath $x$}}

\newcommand{\cc}{\mbox{\boldmath $c$}}

\newcommand{\pp}{\mbox{\boldmath $p$}}
\newcommand{\uu}{\mbox{\boldmath $u$}}

\newcommand{\ww}{\mbox{\boldmath $w$}}

\newcommand{\zz}{\mbox{\boldmath $z$}}

\def \Cb{{\mathbb C}}

\def \Eb{{\mathbb E}}
\def \Fb{{\mathbb F}}
\def \Gb{{\mathbb G}}
\def \Hb{{\mathbb H}}
\def \Kb{{\mathbb K}}

\def \Mb{{\mathbb M}}

\def \Pb{{\mathbb P}}
\def \Rb{{\mathbb R}}

\def \Vb{{\mathbb V}}

\def \Ac{{\mathcal A}}
\def \Rc{{\mathcal R}}

\def \Bc{{\mathcal B}}
\def \Gc{{\mathcal G}}

\def \Uc{{\mathcal U}}

\def \Fc{{\mathcal F}}
\def \Cc{{\mathcal C}}
\def \Sc{{\mathcal S}}

\def \Mc{{\mathcal M}}
\def \Nc{{\mathcal N}}
\def \Pc{{\mathcal P}}

\newcommand{\bqa}{\begin{eqnarray*}}
\newcommand{\eqa}{\end{eqnarray*}}
\newcommand{\bqan}{\begin{eqnarray}}
\newcommand{\eqan}{\end{eqnarray}}
\newcommand{\bqt}{\begin{quote}}
\newcommand{\eqt}{\end{quote}}
\newcommand{\bt}{\begin{tabbing}}
\newcommand{\et}{\end{tabbing}}
\newcommand{\bit}{\begin{itemize}}
\newcommand{\eit}{\end{itemize}}
\newcommand{\ben}{\begin{enumerate}}
\newcommand{\een}{\end{enumerate}}
\newcommand{\beq}{\begin{equation}}
\newcommand{\eeq}{\end{equation}}
\newcommand{\beqw}{\begin{equation*}}
\newcommand{\eeqw}{\end{equation*}}
\newcommand{\bdefi}{\begin{definition}}
\newcommand{\edefi}{\end{definition}}
\newcommand{\bpro}{\begin{proposition}}
\newcommand{\epro}{\end{proposition}}
\newcommand{\blem}{\begin{lemma}}
\newcommand{\elem}{\end{lemma}}
\newcommand{\bco}{\begin{corollary}}
\newcommand{\eco}{\end{corollary}}
\newcommand{\bdes}{\begin{description}}
\newcommand{\edes}{\end{description}}
\newcommand{\bre}{\begin{remark}}
\newcommand{\ere}{\end{remark}}

\newcommand{\eps}{\epsilon}

\def\mds{\medskip}
\def\1{{\mathbf 1}}

\begin{document}

\title{Asymptotic Theory of the Sparse Group LASSO}
\author{Benjamin Poignard\footnote{ENSAE - CREST \& PSL - Paris Dauphine (CEREMADE). E-mail: benjamin.poignard@ensae.fr}}

\date{\today}
\maketitle

\abstract{This paper proposes a general framework for penalized convex empirical criteria and a new version of the Sparse-Group LASSO (SGL, Simon and al., 2013), called the adaptive SGL, where both penalties of the SGL are weighted by preliminary random coefficients. We explore extensively its asymptotic properties and prove that this estimator satisfies the so-called oracle property (Fan and Li, 2001), that is the sparsity based estimator recovers the true underlying sparse model and is asymptotically normally distributed. Then we study its asymptotic properties in a double-asymptotic framework, where the number of parameters diverges with the sample size. We show by simulations that the adaptive SGL outperforms other oracle-like methods in terms of estimation precision and variable selection.}

\mds

\noindent {\it Keywords}: Asymptotic Normality, Consistency, Model Selection, Oracle Property.

\newpage

\section{Introduction}

Model complexity is an obstacle when one models richly parameterized dynamics such as multivariate nonlinear dynamic systems. For instance, dynamic variance correlation processes of size $N$ have an $O(N^2)$ complexity as in the dynamic conditional correlation parametrization (DCC, Ding and Engle, 2001). Another issue arises when the sample size, say $T$, is comparable to $N$, which may reduce the estimation performances. This is typically a high-dimensional statistical framework.

\mds

A significant literature developed on model penalization, which consists of reducing the number of parameters and performing variable selection. For instance, the Akaike's or Bayesian information criteria aim at selecting the size of a model. However, these methods are unstable, computationally complex and their sampling properties are difficult to study as Fan and Li (2001) pointed out mainly because they are stepwise and subset selection procedures.

\mds

The LASSO procedure of Tibshirani (1996) overcomes these drawbacks as it simultaneously performs variable selection and model estimation. It then fosters sparsity and allows for continuity of the selected models. Other penalties were proposed such as the smoothly clipped absolute deviation (SCAD) of Fan, which modifies the LASSO to shrink large coefficients less severely. The elastic net regularization procedure of Zou and Hastie (2005) was developed to overcome the collinearity between the variables, which hampers the LASSO to perform well. Their idea consists of mixing a $l^1$ penalty, which performs variable selection, with a $l^2$ penalty, which stabilizes the solution paths. The Group LASSO of Yuan and Lin (2006) fosters sparsity and variable selection in a group of variables. Simon and al. (2013) designed the Sparse-Group LASSO (SGL) to foster sparsity both at a a group level and within a group. Their penalization involves a $l^1$ LASSO type penalty and a mixed $l^1/l^2$ penalty for group selection. All these procedures, together with the algorithms designed for performing selection and estimation, were developed within a linear framework. The penalized Ordinary Least Square (OLS) loss function is typically used for linear models as it is convex, which makes the computation easier, and allows for closed form solutions, such as the soft-thresholding operator for the LASSO penalty. Furthermore, linear modeling allows for deriving non asymptotic oracle inequalities straightforwardly: see B\"uhlmann and van de Geer (2011) on this non-asymptotic framework.

\mds

Knight and Fu (2000) explored the asymptotic properties of the LASSO penalty for OLS loss functions. Fan and Li (2001) proposed a penalization framework for general convex functions and studied the asymptotic properties of the SCAD penalty. They proved that the SCAD estimator satisfies the oracle property, that is the sparsity based estimator recovers the true underlying sparse model and is asymptotically normally distributed. The LASSO as proposed by Tibshirani cannot enjoy the oracle property. To fix this drawback, Zou (2006) proposed the adaptive LASSO within an OLS framework, where adaptive weights are used to penalize different coefficients in the penalty. Nardi and Rinaldo (2008) applied the same methodology for the Group LASSO estimator within an OLS framework and studied its oracle property.

\mds

These theoretical studies were developed for fixed dimensional models with i.i.d. data, a case where $N$ does not depend on the sample size, and for least square type loss functions, except Fan and Li (2001). Fan and Peng (2004) considered the general penalized convex likelihood framework when the number of parameters grows with the sample size and focused on the oracle property for general penalties. Zou and Zhang (2009) also focused on the oracle property of the adaptive elastic-net within the double-asymptotic framework. Their work highlights that adaptive weights penalizing different coefficients are key quantities to enjoy the oracle property as one can modify the convergence rate of the penalty terms. Nardi and Rinaldo (2008) also proposed within the double-asymptotic setting selection consistency results, which states that asymptotically the right set of relevant variables is selected.

\mds

In this paper, we develop the asymptotic theory of penalized M-estimators for convex criteria and dependent variables and consider the asymptotic properties of the Sparse-Group LASSO estimator. This penalty is relevant for problems where one would like to foster sparsity for selecting active groups, that is a group for which some of the corresponding coefficients are non zero, and active coefficients within an active group, a situation where a coefficient is non zero within an active group. Hence this is somehow a two step approach as first the active groups are selected, and then the active variables within an active group are selected. We prove that the SGL as proposed by Simon and al. (2013) does not enjoy the oracle property. Then we propose a new version of the SGL, the adaptive SGL using the same methodology of Zou (2006), which consists of penalizing different coefficients and group of coefficients using random weights that are positive functions of a first step estimator. This enables to alter the rate of convergence of the penalties such that the adaptive SGL satisfies the oracle property. Our work is influenced by Fan and Peng (2004) concerning the oracle property for general penalized convex loss functions and by Zou and Zhang (2009) regarding the modeling of random weights penalizing the coefficients differently. They both considered a double-asymptotic framework. We also prove that the adaptive SGL enjoys the oracle property in a double-asymptotic framework, a situation where the model complexity grows with the sample size.

\mds

The rest of the paper is organized as follows. In section 2, we describe our general framework for penalized convex empirical criteria and the SGL penalty. In section 3, we derive the optimality conditions of the statistical criterion. In section 4, we derive the asymptotic properties of both the SGL and adaptive SGL when the number of parameters is fixed. In section 5, we prove the oracle property of the adaptive SGL in a double-asymptotic setting.

\section{Framework and notations}

We consider a dynamic system in which the criterion is written as an empirical criterion, that is
\beq \label{emp_criteria_unpen}
\theta \mapsto \Gb_T l(\theta) = \cfrac{1}{T}\overset{T}{\underset{t=1}{\sum}} l(\eps_t;\theta),
\eeq
such that $l(.)$ is "a general" known loss function on the sample space such that for any process $(\eps_t)$, $\theta \mapsto l(\eps_t;\theta)$ is convex. This framework encompasses for instance the maximum likelihood method, where the $l(.)$ function corresponds to $l(\eps_t;\theta) = -\log f(\eps_t;\theta)$, where $f(\eps_t;\theta)$ is the density of the observation $(\eps_t)$ under $\Pb_{\theta}$. Alternatively, a linear model would imply $l(\eps_t;\theta) = \|\eps^{(1)}_t - \theta' \eps^{(2)}_t\|_p$, where $(\eps^{(1)}_t,\eps^{(2)}_t) = \eps_t$. We denote the empirical score and Hessian of the empirical criterion respectively as
\beqw
\dot{\Gb}_T l(\theta) = \cfrac{1}{T}\overset{T}{\underset{t=1}{\sum}} \nabla_{\theta} l(\eps_t;\theta), \; \ddot{\Gb}_T l(\theta) = \cfrac{1}{T}\overset{T}{\underset{t=1}{\sum}} \nabla^2_{\theta \theta'} l(\eps_t;\theta).
\eeqw
The dependent nature of our framework requires the use of particular probabilistic tools to study the asymptotic properties of M-estimators. We extensively use the ergodic theorem and central limit theorem (Billingsley, 1961, 1995) to obtain convergence in probability of empirical quantities to their theoretical counterparts and central limit theorems. To do so, we assume the stationarity and the ergodicity of the underlying process $(\eps_t)$: see assumption \ref{H1} in section \ref{Section4}.

\mds

In this setting, $\epsilon_t \in \Rb^N$ and $\theta \in \Rb^d$, a vector that can be split into $m$ groups $\Gc_k, k = 1,\cdots,m$, such that $\text{card}(\Gc_k) = \cc_k$ and $\overset{m}{\underset{k=1}{\sum}} \cc_k = d$. We suppose no overlap between these groups. We use the notation $\theta^{(l)}$ as the subvector of $\theta$, that is the set $\{\theta_k: k \in \Gc_l\}$. Hence the vector $\theta = (\theta_j,j=1,\cdots,d)$ can be written as $\theta = (\theta^{(k)}_i, k \in \{1,\cdots,m\}, i = 1,\cdots,\cc_k)$ \footnote{Formally, there is a one-to-one mapping between two ways for writing $\theta$:
\beqw
\begin{array}{llll}
\psi: \{1,\cdots,d\} \rightarrow \{(k,i), k= 1,\dots,m; i = 1,\cdots,\cc_k\}, &&\\
j \mapsto \psi(j) = (k_j,i_j).&&
\end{array}
\eeqw
In the rest of this paper, this mapping is implicit such that we allow such writings as $j = (k,i)$ or $j = i_k$ where $k$ is clear.
}. We denote by $\theta_0$ the true parameter vector of interest. Moreover, $\theta \rightarrow \Eb[l(\eps_t;\theta)]$ is supposed to be a one-to-one mapping and is minimized uniquely at $\theta = \theta_0$.

\mds

We denote by $\Sc:=\{k: \theta^{(k)} \neq 0\}$ the set of indices for which the groups are active. Let $\Ac := \{j: \theta_{0,j} \neq 0\}$ be the true subset model, which can be decomposed into sub-groups of active sets as $l \in \Sc$, $\Ac_l = \{(l,i): \theta^{(l)}_{0,i} \neq 0\}$. Besides, there are inactive indices $ \Gc_l \setminus \Ac_l = \Ac^c_l = \{(l,i): \theta^{(l)}_{0,i} = 0 \}$. We have $\{l \notin \Sc\} \Leftrightarrow \{ \forall i = 1,\cdots,\cc_l, \theta^{(l)}_{0,i} = 0\}$. In this setting, $\Ac = \underset{l \in \Sc}{\cup}\Ac_l$ such that for $k \neq l, \, \Ac_k \cap \Ac_l = \emptyset$. Furthermore, $\Ac^c = \overset{m}{\underset{l = 1}{\cup}} \Ac^c_l$ such that for $k \neq l, \, \Ac^c_k \cap \Ac^c_l = \emptyset$.

\mds

Finally, we need the following notations: $\dot{\Gb}_T l(\theta)_{(k)} \in \Rb^{\cc_k}$ is the "score" vector of the empirical criterion taken over group $k$ of size $\cc_k$, $\dot{\Gb}_T l(\theta)_{(k),i} \in \Rb$ is the $i$-th component of this score, and $\dot{\Gb}_T l(\theta)_{\Ac} \in \Rb^{\text{card}(\Ac)}$ is the score over the set of active indices. $\ddot{\Gb}_T l(\theta)_{(k)(k)} \in \Mc_{\cc_k \times \cc_k}(\Rb)$ (resp. $\Hb_{(k)(k)}$) is the empirical (resp. theoretical) Hessian taken over the block representing group $k$, and $\ddot{\Gb}_T l(\theta)_{\Ac \Ac} \in \Mc_{\text{card}(\Ac) \times \text{card}(\Ac)}(\Rb)$ is the Hessian over the set of active indices.

\mds

The statistical problem consists of minimizing over the parameter space $\Theta$ a penalized criterion of the form
\beq \label{1stcrit}
\hat{\theta} = \underset{\theta \in \Theta}{\arg \, \min} \, \{\Gb_T \varphi(\theta)\},
\eeq
where
\beqw
\begin{array}{llll}
\theta \mapsto \Gb_T \varphi(\theta) & = & \cfrac{1}{T}\overset{T}{\underset{t=1}{\sum}} \{ l(\eps_t;\theta) + \pp_1(\lambda_T,\theta) +  \pp_2(\gamma_T,\theta)\} \\
& = & \Gb_T l(\theta) + \pp_1(\lambda_T,\theta) + \pp_2(\gamma_T,\theta).
\end{array}
\eeqw
and both penalties are specified as
\beqw
\left\{\begin{array}{llll}
\pp_1 : \Rb_+ \times \Rb^m_+ \times \Theta \rightarrow \Rb_+, & & \pp_2 : \Rb_+ \times \Rb^m_+ \times \Theta \rightarrow \Rb_+, \\
(\lambda_T,\alpha,\theta) \mapsto \pp_1(\lambda_T,\theta) = \lambda_T T^{-1} \overset{m}{\underset{k = 1}{\sum}} \alpha_k \|\theta^{(k)}\|_1, & & (\gamma_T,\xi,\theta) \mapsto \pp_2(\gamma_T,\theta) = \gamma_T T^{-1} \overset{m}{\underset{l = 1}{\sum}} \xi_l \|\theta^{(l)}\|_2.
\end{array} \right.
\eeqw
Both $\alpha_k$ and $\xi_l$ are non negative scalar quantities for each group and the tuning parameters $\lambda_T$ and $\gamma_T$ vary with $T$.

\mds

The estimator $\hat{\theta}$ obtained in (\ref{1stcrit}) is not the minimum of the empirical unpenalized criterion $\Gb_T l(.)$. Our main interest is to analyze the bias generated by the penalties and how the oracle property can be achieved in the sense of Fan and Li (2001). More precisely, the sparsity based estimator must satisfy
\beqw
\begin{array}{llll}
(i) \hat{\Ac} = \{i: \hat{\theta}_i \neq 0 \} = \Ac \; \text{asymptotically, that is "model selection consistency"}. & & \\
(ii) \sqrt{T}(\hat{\theta}_{\Ac} - \theta_{0,\Ac}) \overset{d}{\rightarrow} \Nc(0,\Vb_0) \; \text{with} \, \Vb_0 \, \text{a covariance matrix related to the criterion of interest}. &&
\end{array}
\eeqw

We highlight in Proposition \ref{proposition1}, section \ref{Section4} that actually the SGL as proposed by Simon and al. (2013) cannot perform the oracle property. Hence in section \ref{Section4}, we propose a new estimator based on the same idea as Zou (2006), the adaptive Sparse Group LASSO, for which the oracle property is obtained when the weights are randomized, as proved in Theorem \ref{oracle1}.

\mds

This framework can be adapted to a broad range of problem. For instance, one can penalize a subset of groups with a $l^1$ penalty only, and the other groups with a $l^1/l^2$ penalty only. This framework encompasses the SGL, the LASSO and the group LASSO for proper choices of $\alpha$'s and $\xi$'s.

\mds

Let us motivate the interets of the SGL approach and illustrate our notations through a simple linear example. In finance, finding the right set of explanatory variables to predict future asset returns is a significant issue. For instance, one may use Japanese companies indices, the Japanese GDP or the Japanese aggregated dividend-price ratio to explain the Nikkei index return through a linear projection. But one should also consider some foreign variables, such as the S\&P 500 index or the US yield curve. Consequently, some groups of variables naturally arise: group of financial companies, tech companies, and the like; group of foreign components such as American financial companies, and the like. Hence the set $\Gc_k$ may represent the $k$-th ($k \leq m$) group of Japanese financial companies, composed (as a shortcoming) with Nomura (index $1$), MUFG-Bank of Tokyo (index $2$) and Sumitomo (index 3) represented by the parameter vector $\theta^{(k)}=(\theta^{(k)}_1,\theta^{(k)}_2,\theta^{(k)}_3)$; then $k \in \Sc$ if the whole group has a statistically significant effect on the Nikkei index. Suppose the $l^1/l^2$ penalty selects this group as active. Then $\Ac_k$ represents the set of active components in $\Gc_k$ such that $\cc_{\Ac_k} = \text{card}(\Ac_k) \leq \text{card}(\Gc_k) = \cc_k$. The $l^1$ penalty fosters sparsity within this selected group. If Nomura is the only variable that is expelled, then $1 \in \Ac^c_k = \Gc_k \setminus \Ac_k$, whereas $\{2,3\} \in \Ac_k$ and $\cc_{\Ac_k} = 2$.

\section{Optimality conditions}

The statistical problem consists of solving (\ref{1stcrit}). Both $\Gb_T l(.)$, $\pp_1(\lambda_T,\alpha,.)$ and $\pp_2(\gamma_T,\xi,.)$ are convex functions and there are no inequality constraints. Consequently, by the Karush-Kuhn-Tucker optimality conditions, which are necessary and sufficient, the estimator $\hat{\theta}$ satisfies for a group $k$
\beq \label{opt1}
\dot{\Gb}_T l(\hat{\theta})_{(k)} + \lambda_T T^{-1} \alpha_k \hat{\ww}^{(k)} + \gamma_T T^{-1} \xi_k \hat{\zz}^{(k)} = 0,
\eeq
for some vectors $\ww^{(k)}$ and $\zz^{(k)}$ satisfying
\beqw
\hat{\ww}^{(k)} = \begin{cases}
\text{sgn}(\hat{\theta}^{(k)}_i) & \text{if} \, \hat{\theta}^{(k)}_i \neq 0, i = 1,\cdots,\cc_k, \\
\in \{\hat{\ww}^{(k)}_i : |\hat{\ww}^{(k)}_i| \leq 1\} & \text{if} \, \hat{\theta}^{(k)}_i = 0, i = 1,\cdots,\cc_k.
\end{cases}
\,
\hat{\zz}^{(k)} = \begin{cases}
\hat{\theta}^{(k)} / \|\hat{\theta}^{(k)}\|_2 & \text{if} \, \hat{\theta}^{(k)} \neq 0,\\
\in \{\hat{\zz}^{(k)} : \|\hat{\zz}^{(k)}\|_2 \leq 1\} & \text{if} \, \hat{\theta}^{(k)} = 0.
\end{cases}
\eeqw
If $\hat{\theta}^{(k)} = \bf{0}$, we have $\|\hat{\zz}^{(k)}\|_2 \leq 1$. Then, from (\ref{opt1}), we obtain for such a $k \notin \Sc$
\beqw
\overset{\cc_k}{\underset{i=1}{\sum}}(\dot{\Gb}_T l(\hat{\theta})_{(k),i} + \lambda_T T^{-1} \alpha_k \hat{\ww}^{(k)}_i )^2 = \overset{\cc_k}{\underset{i=1}{\sum}} (\gamma_T T^{-1} \xi_k \hat{\zz}^{(k)}_i)^2 \leq \gamma^2_T T^{-2} \xi^2_k \|\zz^{(k)} \|^2_2.
\eeqw
Consequently, if the subgradient equations are satisfied for $\hat{\theta}^{(k)}$, then $\hat{\theta}^{(k)} = \mathbf{0}$ if
\beqw
\|\dot{\Gb}_T l(\hat{\theta})_{(k)}+\lambda_T T^{-1}\alpha_k \hat{\ww}^{(k)}\|_2 \leq \gamma_T T^{-1} \xi_k.
\eeqw
On the contrary, if this condition is not satisfied, then $\hat{\theta}^{(k)} \neq \mathbf{0}$. In this case, sparsity is fostered by the $l^1$ penalty as follows: using the optimality condition of (\ref{opt1}), we have for $\hat{\theta}^{(k)} \neq \bf{0}$
\beqw
\forall i = 1,\cdots,\cc_k, -\dot{\Gb}_T l(\hat{\theta})_{(k),i} = \lambda_T T^{-1} \alpha_k \hat{\ww}^{(k)}_i + \gamma_T T^{-1} \xi_k \cfrac{\hat{\theta}^{(k)}_i}{\|\hat{\theta}^{(k)}\|_2}.
\eeqw
If $\hat{\theta}^{(k)}_i = 0$, then $|\hat{\ww}^{(k)}_i| \leq 1$ and we obtain straightforwardly
\beqw
|\dot{\Gb}_T l(\hat{\theta})_{(k),i}| \leq \lambda_T T^{-1} \alpha_k.
\eeqw
Bertsekas (1995) proposed the use of subdifferential calculus to characterize necessary and sufficient solutions for problems such as (\ref{1stcrit}). The conditions we derived are close to those of Simon and al. (2013) (obtained for a least square loss function). They will be extensively used in the rest of the paper.

\section{Asymptotic properties} \label{Section4}

To prove the asymptotic results, we make the following assumptions.
\begin{assumption}\label{H1}
$(\eps_t)$ is a strictly stationary, ergodic and nonanticipative process.
\end{assumption}
\begin{assumption}\label{H2}
The parameter set $\Theta \subset \Rb^d$ is convex and not necessarily compact.
\end{assumption}
\begin{assumption}\label{H3}
For any $(\eps_t)$, the function $\theta \mapsto l(\eps_t;\theta)$ is strictly convex and $C^{\infty}(\Rb,\Theta)$.
\end{assumption}
\begin{assumption} \label{H4}
The function $\theta \mapsto \Eb[l(\eps_t;\theta)]$ is uniquely minimized in $\theta_0$. Moreover, $(\nabla l(\eps_t;\theta_0))$ is a square integrable martingale difference.
\end{assumption}
\begin{assumption}\label{H5}
$\Hb := \Eb[ \nabla^2_{\theta \theta'} l(\epsilon_t;\theta_0)]$ and $\Mb := \Eb[ \nabla_{\theta} l(\epsilon_t;\theta_0) \nabla_{\theta'} l(\epsilon_t;\theta_0)]$ exist and are positive definite.
\end{assumption}
\begin{assumption}\label{third_or}
Let $\upsilon_t(C) = \underset{k,l,m=1,\cdots,d}{\sup} \{ \underset{\theta:\|\theta-\theta_0\|_2 \leq \nu_T C}{\sup} |\partial^3_{\theta_k \theta_l \theta_m} l(\eps_t;\theta_0)|\}$, where $C > 0$ is a fixed constant and $\nu_T \underset{T \rightarrow \infty}{\longrightarrow} 0$, a quantity that will be made explicit. Then
\beqw
\eta(C) := \cfrac{1}{T^2} \overset{T}{\underset{t,t'=1}{\sum}} \Eb[\upsilon_t(C) \upsilon_{t'}(C)] < \infty.
\eeqw
\end{assumption}
\bre Assumptions \ref{H1} and \ref{H4} allows for using the central limit theorem of Billingsley (1961). We remind this result stated as a corollary in Billingsley (1961).
\begin{corollary}\emph{(Billingsley, 1982)} \\
If $(x_t,\Fc_t)$ is a stationary and ergodic sequence of square integrable martingal increments such that $\sigma^2_x = \text{Var}(x_t) \neq 0$, then
\beqw
\cfrac{1}{\sqrt{T}} \overset{T}{\underset{t=1}{\sum}} x_t \overset{d}{\rightarrow} \Nc(0,\sigma^2_x).
\eeqw
\end{corollary}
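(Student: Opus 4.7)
The plan is to deduce the statement from the martingale central limit theorem for triangular arrays of martingale differences, and to verify its two canonical hypotheses by means of the Birkhoff ergodic theorem and dominated convergence. Writing $S_T := \sum_{t=1}^T x_t$, the sequence $(S_n,\Fc_n)_{n \geq 1}$ is a square integrable martingale; rescaling the increments by $1/\sqrt{T}$ reduces the claim to checking (a) the normalized conditional quadratic variation $V_T := T^{-1}\sum_{t=1}^T \Eb[x_t^2 \mid \Fc_{t-1}]$ converges in probability to $\sigma_x^2$, and (b) a Lindeberg condition on the rescaled summands.

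For (a), I would observe that $(\Eb[x_t^2 \mid \Fc_{t-1}])_t$ is a measurable functional of the underlying shift-invariant process, and hence itself strictly stationary and ergodic. Birkhoff's ergodic theorem then delivers $V_T \longrightarrow \Eb[x_1^2] = \sigma_x^2$ almost surely, and in particular in probability.

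For (b), fix $\eps > 0$ and set $L_T(\eps) := T^{-1}\sum_{t=1}^T \Eb[x_t^2 \1_{\{|x_t| > \sqrt{T}\eps\}} \mid \Fc_{t-1}]$. By strict stationarity, $\Eb[L_T(\eps)] = \Eb[x_1^2 \1_{\{|x_1| > \sqrt{T}\eps\}}]$, which tends to zero by dominated convergence since $x_1 \in L^2$; the non-negative random variable $L_T(\eps)$ therefore converges to zero in $L^1$ and a fortiori in probability. Conditions (a) and (b), together with the martingale CLT of Billingsley (1961) (equivalently Hall and Heyde, 1980, Theorem 3.2), yield $T^{-1/2} S_T \overset{d}{\longrightarrow} \Nc(0,\sigma_x^2)$. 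The only potentially delicate step is the inheritance of ergodicity by the conditional-second-moment sequence, which follows from the general principle that every measurable factor of a stationary ergodic dynamical system remains stationary and ergodic; the remaining steps are essentially mechanical once this is in hand.
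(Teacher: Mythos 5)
Your argument is correct. Note first that the paper does not prove this statement at all: it is quoted verbatim as a known result of Billingsley (1961), so there is no internal proof to compare against. What you give is the standard modern derivation via the martingale CLT for arrays (Hall--Heyde, Theorem 3.2 / Corollary 3.1): the conditional variance condition from Birkhoff applied to the factor process $\bigl(\Eb[x_t^2 \mid \Fc_{t-1}]\bigr)_t$, and the conditional Lindeberg condition from stationarity plus dominated convergence in $L^1$. Both verifications are sound, and since the limiting conditional variance $\sigma_x^2$ is deterministic the mixture-of-normals conclusion of the array CLT collapses to $\Nc(0,\sigma_x^2)$ as required. The one point you correctly flag as delicate deserves a slightly sharper statement: the strict stationarity of $\Eb[x_t^2 \mid \Fc_{t-1}]$ holds exactly only when $\Fc_t$ is the (two-sided) natural filtration $\sigma(x_s, s \leq t)$ of the stationary extension of the process, so that the conditional expectation is a fixed measurable function of the shifted past; with the one-sided filtration $\sigma(x_1,\dots,x_{t-1})$ the sequence is only asymptotically stationary and one must pass through the two-sided extension (or invoke L\'evy's upward theorem) before applying Birkhoff. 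This is the setting Billingsley works in, and it is consistent with the paper's Assumption 4.1, so the gap is one of bookkeeping rather than substance.
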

Note that the square martingale difference condition can be relaxed by $\alpha$-mixing and moment conditions. For instance, Rio (2013) provides a central limit theorem for strongly mixing and stationary sequences.
\ere

\begin{theorem} \label{consistency}
Under assumptions \ref{H1}-\ref{H3}, if $\lambda_T/T \rightarrow \lambda_0 \geq 0$ and $\gamma_T/T \rightarrow \gamma_0 \geq 0$, then for any compact set $\BB \subset \Theta$ such that $\theta_0 \in \BB$,
\beqw
\hat{\theta} \overset{\Pb}{\longrightarrow} \underset{\xx \in \BB}{\arg \, \min} \{\Gb_{\infty}\varphi(\xx)\},
\eeqw
with
\beqw
\Gb_{\infty}\varphi(\xx) = \Gb_{\infty}l(\xx) + \lambda_0 \overset{m}{\underset{k = 1}{\sum}} \alpha_k \|\xx^{(k)}\|_1 + \gamma_0 \overset{m}{\underset{l = 1}{\sum}} \xi_l \|\xx^{(l)}\|_2,
\eeqw
where $\theta^*_0 = \underset{\xx \in \BB}{\arg \, \min} \{\Gb_{\infty}\varphi(\xx)\}$ is supposed to be a unique minimum, and $\Gb_{\infty}l(.)$ is the limit in probability of $\Gb_T l(.)$.
\end{theorem}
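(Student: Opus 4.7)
}

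The strategy I would follow is the standard convex-M-estimation consistency argument à la Pollard (1991) and Hjort--Pollard (1993): establish pointwise convergence in probability of the penalized empirical criterion to its limit, upgrade this to uniform convergence on the compact $\BB$ via the convexity lemma, and then deduce convergence of the argmin using the assumed uniqueness of $\theta_0^* = \arg\min_{\xx \in \BB} \Gb_\infty \varphi(\xx)$.

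First I would handle the penalty terms, which are the easy piece. Since $\pp_1(\lambda_T,\theta) = (\lambda_T/T)\sum_{k=1}^m \alpha_k \|\theta^{(k)}\|_1$ and $\pp_2(\gamma_T,\theta) = (\gamma_T/T)\sum_{l=1}^m \xi_l \|\theta^{(l)}\|_2$ are deterministic, and $\lambda_T/T\to\lambda_0$, $\gamma_T/T\to\gamma_0$, the penalties converge pointwise (in fact uniformly on the compact $\BB$, by continuity of the norms) to $\lambda_0 \sum_k \alpha_k \|\theta^{(k)}\|_1 + \gamma_0 \sum_l \xi_l \|\theta^{(l)}\|_2$. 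Next, for the data-dependent part, I would fix $\theta \in \BB$ and invoke Assumption \ref{H1}: since $(\eps_t)$ is strictly stationary and ergodic and $\theta \mapsto l(\eps_t;\theta)$ is measurable, Birkhoff's ergodic theorem gives $\Gb_T l(\theta) \overset{\Pb}{\longrightarrow} \Eb[l(\eps_t;\theta)] =: \Gb_\infty l(\theta)$ at every $\theta \in \BB$. Combining these two facts yields pointwise convergence in probability $\Gb_T \varphi(\theta) \overset{\Pb}{\longrightarrow} \Gb_\infty \varphi(\theta)$ on $\BB$.

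The key structural step is then to promote pointwise convergence to uniform convergence on $\BB$. By Assumption \ref{H3} each map $\theta \mapsto l(\eps_t;\theta)$ is convex, hence $\theta \mapsto \Gb_T l(\theta)$ is convex, and adding the convex penalties preserves convexity, so $\Gb_T \varphi$ is convex; the same holds for $\Gb_\infty \varphi$. The convexity lemma (see Pollard, 1991; Andersen and Gill, 1982) then states that pointwise convergence in probability of a sequence of convex random functions to a (possibly random) limit, which is finite on an open convex set containing $\BB$, implies uniform convergence in probability on $\BB$. This is precisely where Assumption \ref{H3} is crucial; without convexity I would instead need a stochastic equicontinuity / bracketing argument.

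Finally, to conclude $\hat\theta \overset{\Pb}{\longrightarrow} \theta_0^*$, I would apply the standard argmin continuous mapping result: given uniform convergence on $\BB$ and a unique minimizer $\theta_0^*$ of the continuous convex limit $\Gb_\infty \varphi$ on the compact set $\BB$, any minimizing sequence must concentrate near $\theta_0^*$. Concretely, for every open neighborhood $U$ of $\theta_0^*$ one has $\inf_{\BB\setminus U}\Gb_\infty\varphi > \Gb_\infty\varphi(\theta_0^*)$ by compactness and uniqueness, and uniform convergence transfers this strict inequality to $\Gb_T\varphi$ with probability tending to one, forcing $\hat\theta \in U$ eventually. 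The main obstacle I expect is the uniform-convergence step, which leans entirely on the convexity lemma and on the fact that $\Gb_\infty\varphi$ is finite on a neighborhood of $\BB$; a secondary point is to ensure $\hat\theta$ indeed lies in $\BB$ with probability tending to one (which follows from $\theta_0 \in \BB$, strict convexity, and the limiting criterion being coercive around $\theta_0^*$).
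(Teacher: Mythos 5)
Your proposal is correct and follows essentially the same route as the paper: pointwise convergence from the ergodic theorem plus the deterministic penalty limits, the Andersen--Gill convexity lemma to upgrade to uniform convergence on compacts, and the argmin convergence step for a convex limit with a unique minimizer (the paper invokes Andersen and Gill's corollary where you spell out the well-separated-minimum argument, but these are the same idea). You also correctly flag the need to localize $\hat{\theta}$ in a compact set with probability tending to one via strict convexity, which is exactly how the paper handles it.
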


\mds

To prove this theorem, we remind of Theorem II.1 of Andersen and Gill (1982) which proves that pointwise convergence in probability of random concave functions implies uniform convergence on compact subspaces.
\begin{lemma}\label{andersen_gill_lemma}\emph{(Andersen and Gill, 1982)} \\
Let $E$ be an open convex subset of $\Rb^p$, and let $F_1, F_2,\ldots,$ be a sequence of random concave functions on $E$ such that $F_n(x) \overset{\Pb}{\underset{n \rightarrow \infty}{\longrightarrow}} f(x)$ for every $x \in E$ where $f$ is some real function on $E$. Then $f$ is also concave, and for all compact $A \subset E$,
\beqw
\underset{x \in A}{\sup} |F_n(x) - f(x)| \overset{\Pb}{\underset{n \rightarrow \infty}{\longrightarrow}} 0.
\eeqw
\end{lemma}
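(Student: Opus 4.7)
The plan is to establish the lemma in two stages: first prove that concavity is preserved under pointwise convergence in probability, then upgrade pointwise to uniform convergence on compacts by combining the deterministic Rockafellar-style result with a subsequence argument.

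\textbf{Step 1: Concavity of the limit.} Fix arbitrary $x,y\in E$ and $\lambda\in[0,1]$, and set $z=\lambda x+(1-\lambda)y\in E$. Since each $F_n$ is concave,
\beqw
F_n(z)\ \geq\ \lambda F_n(x)+(1-\lambda)F_n(y).
\eeqw
The sequences $F_n(x)$, $F_n(y)$, $F_n(z)$ converge in probability to $f(x)$, $f(y)$, $f(z)$ respectively, so by the Riesz subsequence principle we can extract a common subsequence along which all three converge almost surely; passing to the limit preserves the inequality and yields $f(z)\geq \lambda f(x)+(1-\lambda)f(y)$. Hence $f$ is concave on the open convex set $E$, and in particular continuous there.

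\textbf{Step 2: Deterministic uniform convergence on compacts.} I would next invoke the following classical fact (Theorem~10.8 in Rockafellar's \emph{Convex Analysis}): if $\{g_n\}$ is a sequence of finite concave functions on an open convex set $E\subset\Rb^p$ converging pointwise on a dense subset $D\subset E$ to a finite function $g$, then $g_n\to g$ uniformly on every compact $A\subset E$. The argument goes by covering $A$ by finitely many closed balls whose slightly enlarged versions still lie in $E$; on each such ball the concave functions $g_n$ are uniformly Lipschitz with constant controlled by the oscillation of $g_n$ at the vertices of a simplex enclosing the ball, and pointwise convergence at these finitely many vertices, combined with equicontinuity, yields uniform control.

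\textbf{Step 3: Transfer to convergence in probability.} To lift Step~2 to the stochastic setting, I would use the characterization that $X_n\overset{\Pb}{\to}0$ iff every subsequence has a sub-subsequence converging to $0$ almost surely. Given an arbitrary subsequence $\{F_{n_k}\}$, pick a countable dense subset $D=\{d_j\}_{j\geq 1}\subset E$; by a diagonal extraction from the assumed pointwise convergence in probability, I can find a sub-subsequence $\{F_{m_k}\}$ with $F_{m_k}(d_j)\to f(d_j)$ almost surely for every $j$. On the almost sure event where this holds for all $j$ simultaneously (a countable intersection of probability-one sets), the deterministic result from Step~2 applies to the sample paths $F_{m_k}(\cdot,\omega)$, so $\sup_{x\in A}|F_{m_k}(x)-f(x)|\to 0$ a.s. The subsequence principle then delivers $\sup_{x\in A}|F_n(x)-f(x)|\overset{\Pb}{\to}0$.

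\textbf{Main obstacle.} The genuinely nontrivial ingredient is the deterministic Rockafellar result in Step~2; the stochastic wrapping in Step~3 is essentially a routine subsequence extraction once Step~1 guarantees the limit is concave and hence continuous on $E$. I would either cite Rockafellar directly or sketch the Lipschitz-on-compacts estimate for concave functions, since this is where the geometric content of the lemma truly sits.
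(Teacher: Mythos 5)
Your proposal is correct and follows essentially the same route the paper indicates for this lemma: a diagonal extraction over a countable dense subset combined with Rockafellar's Theorem 10.8, wrapped in the subsequence characterization of convergence in probability. The paper only sketches this argument in one sentence, and your three steps (including the observation that $f$ is concave, hence continuous, so the limits on the dense set pin it down everywhere) fill in that sketch faithfully.
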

The proof of this theorem is based on a diagonal argument and theorem 10.8 of Rockafeller (1970), that is the pointwise convergence of concave random functions on a dense and countable subset of an open set implies uniform convergence on any compact subset of the open set. Then the following corollary is stated.
\begin{corollary}\label{corollary_Andersen_Gill}\emph{(Andersen and Gill, 1982)} \\
Assume $F_n(x) \overset{\Pb}{\underset{n \rightarrow \infty}{\longrightarrow}} f(x)$,for every $x \in E$, an open convex subset of $\Rb^p$. Suppose $f$ has a unique maximum at $x_0 \in E$. Let $\hat{X}_n$ maximize $F_n$. Then $\hat{X}_n \overset{\Pb}{\underset{n \rightarrow \infty}{\longrightarrow}} x_0$.
\end{corollary}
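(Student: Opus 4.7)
The plan is to leverage the uniform-convergence conclusion of the preceding lemma together with the concavity of the $F_n$ (which is implicit in the setup inherited from Lemma \ref{andersen_gill_lemma}, where concavity of each $F_n$ and of $f$ is obtained) to pin $\hat{X}_n$ into an arbitrarily small neighborhood of $x_0$ with probability tending to one. Fix $\varepsilon > 0$ small enough that the closed ball $A_\varepsilon := \bar{B}(x_0,\varepsilon) \subset E$. Since $f$ is concave on the open set $E$, it is continuous there, and since $x_0$ is its unique maximum, the quantity
\[
\delta_\varepsilon := f(x_0) - \sup_{x \in \partial A_\varepsilon} f(x)
\]
is strictly positive (the sup is attained on the compact boundary, and equality with $f(x_0)$ would contradict uniqueness). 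Applying Lemma \ref{andersen_gill_lemma} on the compact set $A_\varepsilon$ yields $\sup_{x \in A_\varepsilon}|F_n(x)-f(x)| \overset{\Pb}{\to} 0$, so the event
\[
\Omega_n := \Bigl\{ \sup_{x \in A_\varepsilon} |F_n(x) - f(x)| < \delta_\varepsilon/3 \Bigr\}
\]
satisfies $\Pb(\Omega_n) \to 1$.

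On $\Omega_n$, simultaneously $F_n(x_0) > f(x_0) - \delta_\varepsilon/3$ and, for every $y \in \partial A_\varepsilon$, $F_n(y) < f(y) + \delta_\varepsilon/3 \leq f(x_0) - 2\delta_\varepsilon/3$. Hence $F_n(x_0) > F_n(y)$ for all $y \in \partial A_\varepsilon$. I now propagate this strict inequality outside the ball using concavity: for any $y \in E \setminus A_\varepsilon$, the segment from $x_0$ to $y$ crosses $\partial A_\varepsilon$ at a point $z = \alpha x_0 + (1-\alpha)y$ with $\alpha \in (0,1)$. Concavity of $F_n$ gives $F_n(z) \geq \alpha F_n(x_0) + (1-\alpha) F_n(y)$, which rearranges to
\[
F_n(y) \leq \frac{F_n(z) - \alpha F_n(x_0)}{1 - \alpha} < \frac{F_n(x_0) - \alpha F_n(x_0)}{1 - \alpha} = F_n(x_0),
\]
where the strict inequality uses $F_n(z) < F_n(x_0)$. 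Therefore, on $\Omega_n$, every maximizer $\hat{X}_n$ of $F_n$ over $E$ must lie in $A_\varepsilon$, giving $\Pb(\|\hat{X}_n - x_0\| \leq \varepsilon) \geq \Pb(\Omega_n) \to 1$. Since $\varepsilon > 0$ was arbitrary, $\hat{X}_n \overset{\Pb}{\to} x_0$.

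The main obstacle is the step ruling out that $\hat{X}_n$ escapes the neighborhood: uniform convergence of $F_n$ to $f$ is only invoked on the compact set $A_\varepsilon$, yet $\hat{X}_n$ maximizes $F_n$ over the entire open set $E$, so one needs an extra structural argument to prevent the maximizer from drifting to the complement of $A_\varepsilon$. Concavity is exactly what supplies this: it allows us to extrapolate the boundary dominance $F_n(x_0) > F_n(y)$ on $\partial A_\varepsilon$ to global dominance on $E \setminus A_\varepsilon$ via a one-dimensional convex-combination argument, without needing any control of $F_n$ far from $x_0$.
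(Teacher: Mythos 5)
Your proof is correct. The paper itself states this corollary without proof, simply citing Andersen and Gill (1982), so there is no in-paper argument to compare against; your write-up supplies exactly the standard argument that the citation points to (and that Hjort and Pollard, also cited in the paper, use). The two key steps are both handled properly: the gap $\delta_\varepsilon>0$ on the boundary sphere follows from continuity of the concave limit $f$ on the open set $E$, compactness of $\partial A_\varepsilon$, and uniqueness of the maximizer; and the extrapolation of the boundary dominance $F_n(x_0)>F_n(y)$ to all of $E\setminus A_\varepsilon$ via the convex-combination inequality is the essential use of concavity that prevents the maximizer from escaping the compact set on which uniform convergence (Lemma \ref{andersen_gill_lemma}) is available. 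The only point worth flagging is the one you already flag yourself: concavity of the $F_n$ is not restated in the corollary's hypotheses and must be understood as inherited from the lemma's setting, which is how the source intends it.
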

Newey and Powell (1987) use a similar theorem to prove the consistency of asymmetric least squares estimators without any compacity assumption on $\Theta$. We apply these results in our framework, where the parameter set $\Theta$ is supposed to be convex.

\begin{proof} {\it of Theorem \ref{consistency}.} \\
By definition, $\hat{\theta} = \underset{\theta \in \Theta}{\arg \, \min} \, \{\Gb_T \varphi(\theta)\}$. In a first step, we prove the uniform convergence of $\Gb_T \varphi(.)$ to the limit quantity $\Gb_{\infty}\varphi(.)$ on any compact set $\BB \subset \Theta$, idest
\beq \label{Anderson_Gill}
\underset{\xx \in \BB}{\sup} |\Gb_T \varphi(\xx) - \Gb_{\infty}\varphi(\xx) | \overset{\Pb}{\underset{T \rightarrow \infty}{\longrightarrow}} 0.
\eeq
We define $\Cc \subset \Theta$ an open convex set and pick $\xx \in \Cc$. Then by Assumption \ref{H1}, the law of large number implies
\beqw
\Gb_T l(\xx) \overset{\Pb}{\underset{T \rightarrow \infty}{\longrightarrow}} \Gb_{\infty} l(\xx).
\eeqw
Consequently, if $\lambda_T / T \rightarrow \lambda_0 \geq 0$ and $\gamma_T / T \rightarrow \gamma_0 \geq 0$, we obtain the pointwise convergence
\beqw
|\Gb_T \varphi(\xx) - \Gb_{\infty}\varphi(\xx)| \overset{\Pb}{\underset{T \rightarrow \infty}{\longrightarrow}} 0.
\eeqw
By Lemma \ref{andersen_gill_lemma} of Andersen and Gill, $\Gb_{\infty} \varphi(.)$ is a convex function and we deduce the desired uniform convergence over any compact subset of $\Theta$, that is (\ref{Anderson_Gill}).

\mds

Now we would like that $\arg \, \min \, \{\Gb_T \varphi(.)\}\overset{\Pb}{\longrightarrow} \arg \, \min \, \{\Gb_{\infty} \varphi(.)\}$. By assumption \ref{H3}, $\varphi(.)$ is strictly convex, which implies
\beqw
|\Gb_T \varphi(\theta)| \overset{\Pb}{\underset{\|\theta\| \rightarrow \infty}{\longrightarrow}} \infty.
\eeqw
Consequently, $\arg \, \min \{\Gb_T \varphi(\xx)\} = O(1)$, such that $\hat{\theta} \in \Bc_o(\theta_0,C)$ with probability approaching one for $C$ large enough, with $\Bc_o(\theta_0,C)$ an open ball centered at $\theta_0$ and of radius $C$. Furthermore, as $\Gb_{\infty} \varphi(.)$ is strictly convex, continuous, then $\underset{\xx \in B}{\arg \, \min} \, \{\Gb_{\infty} \varphi(\xx)\}$ exists and is unique. Then by Corollary \ref{corollary_Andersen_Gill} of Andersen and Gill, we obtain
\beqw
\underset{\xx \in \BB}{\arg \, \min} \{\Gb_T \varphi(\xx)\} \overset{\Pb}{\rightarrow} \underset{\xx \in \BB}{\arg \, \min} \{\Gb_{\infty} \varphi(\xx)\},
\eeqw
that is $\hat{\theta} \overset{\Pb}{\longrightarrow} \theta^*_0$.
\end{proof}

\begin{theorem}\label{bound_prob}
Under Assumptions \ref{H1}-\ref{H3} and \ref{third_or}, the sequence of penalized estimators $\hat{\theta}$ satisfies
\beqw
\|\hat{\theta} - \theta_0\| = O_p(T^{-1/2} + \lambda_T T^{-1} a_T + \gamma_T T^{-1} b_T),
\eeqw
when $\lambda_T = o(T)$ and $\gamma_T = o(T)$, and $a_T := \text{card}(\Ac).\{\underset{k}{\max} \; \alpha_k\}$ and $b_T := \text{card}(\Ac).\{\underset{l}{\max} \; \xi_l\}$ satisfy $\lambda_T T^{-1} a_T \rightarrow 0$ and $\gamma_T T^{-1} b_T \rightarrow 0$.
\end{theorem}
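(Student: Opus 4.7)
The plan is to adapt the Fan and Li (2001) / Fan and Peng (2004) M-estimation strategy to the sparse-group penalty. Set $r_T := T^{-1/2} + \lambda_T T^{-1} a_T + \gamma_T T^{-1} b_T$ and aim to show that for every $\epsilon > 0$ there exists $C > 0$ such that
\[
\Pb\Bigl(\inf_{\|\uu\|_2 = C}\Gb_T \varphi(\theta_0 + r_T \uu) > \Gb_T \varphi(\theta_0)\Bigr) \geq 1 - \epsilon.
\]
Because $\Gb_T \varphi$ is strictly convex (Assumption \ref{H3}), such a barrier on the sphere of radius $C r_T$ confines the global minimiser $\hat{\theta}$ to the ball of radius $C r_T$ around $\theta_0$, which is exactly the claimed rate $\|\hat{\theta} - \theta_0\| = O_p(r_T)$.

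To produce that barrier I would decompose $D_T(\uu) := \Gb_T \varphi(\theta_0 + r_T \uu) - \Gb_T \varphi(\theta_0)$ into a loss increment plus two penalty increments and lower-bound each. For the loss, Assumption \ref{H3} legitimates a third-order Taylor expansion
\[
\Gb_T l(\theta_0 + r_T \uu) - \Gb_T l(\theta_0) = r_T\, \dot{\Gb}_T l(\theta_0)' \uu + \tfrac{r_T^2}{2}\uu' \ddot{\Gb}_T l(\theta_0)\uu + \tfrac{r_T^3}{6} R_T(\uu),
\]
with $|R_T(\uu)| \leq \|\uu\|_2^3 \cdot T^{-1}\sum_t \upsilon_t(C)$. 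Billingsley's CLT (Assumptions \ref{H1}, \ref{H4}) yields $\dot{\Gb}_T l(\theta_0) = O_p(T^{-1/2})$, so the linear term is $O_p(r_T T^{-1/2})\|\uu\|_2 \leq O_p(r_T^2)\|\uu\|_2$. The ergodic theorem together with Assumption \ref{H5} provides $\ddot{\Gb}_T l(\theta_0) \overset{\Pb}{\to} \Hb \succ 0$, bounding the quadratic term below by $c r_T^2 \|\uu\|_2^2$ for $c$ equal to half the smallest eigenvalue of $\Hb$ on an event of probability tending to one. Finally, Assumption \ref{third_or} combined with Markov's inequality applied to $(T^{-1}\sum_t \upsilon_t(C))^2$, whose expectation is $\eta(C) < \infty$, gives $T^{-1}\sum_t \upsilon_t(C) = O_p(1)$; hence the remainder is $O_p(r_T^3 \|\uu\|_2^3) = o_p(r_T^2)$ as $r_T \to 0$ and $\|\uu\|_2$ stays bounded.

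For the penalty increments I would exploit that on inactive coordinates the $l^1$ norm grows and on inactive groups the $l^1/l^2$ norm grows, so both contribute nonnegatively and may be dropped from a lower bound; only active indices and active groups can contribute negatively. The reverse triangle inequality then gives
\[
\pp_1(\lambda_T, \theta_0 + r_T\uu) - \pp_1(\lambda_T,\theta_0) \geq - r_T \lambda_T T^{-1} \max_k \alpha_k \sum_{j \in \Ac} |u_j| \geq - r_T (\lambda_T T^{-1} a_T) \|\uu\|_2,
\]
using $\sum_{j \in \Ac}|u_j| \leq \text{card}(\Ac)\|\uu\|_2$ and the definition $a_T = \text{card}(\Ac) \max_k \alpha_k$. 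The analogous computation, now indexed over active groups $l \in \Sc$ with $|\Sc| \leq \text{card}(\Ac)$, delivers $\pp_2(\gamma_T,\theta_0+r_T\uu) - \pp_2(\gamma_T,\theta_0) \geq -r_T(\gamma_T T^{-1} b_T)\|\uu\|_2$. By the very definition of $r_T$ both $\lambda_T T^{-1} a_T$ and $\gamma_T T^{-1} b_T$ are $\leq r_T$, so each penalty increment is $\geq -r_T^2 \|\uu\|_2$.

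Collecting everything at $\|\uu\|_2 = C$,
\[
D_T(\uu) \geq r_T^2 \bigl( c C^2 - O_p(1)\,C - o_p(1) \bigr),
\]
and choosing $C$ large enough makes the right-hand side strictly positive with arbitrarily high probability, which closes the argument. The step I expect to be the most delicate is the bookkeeping of the penalty-difference bound: one must recognise that only the at most $\text{card}(\Ac)$ active coefficients contribute negatively to the $l^1$ increment and only the at most $\text{card}(\Ac)$ active groups to the $l^1/l^2$ increment, which is precisely what makes the two penalty terms scale with $\lambda_T T^{-1} a_T$ and $\gamma_T T^{-1} b_T$ rather than with $\lambda_T T^{-1}$ and $\gamma_T T^{-1}$ times the full dimension $d$.
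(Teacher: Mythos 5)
Your proposal is correct and follows essentially the same route as the paper's own proof: the convexity-based reduction to a sphere of radius $C r_T$, the third-order Taylor expansion with the score controlled by Billingsley's CLT, the Hessian by the ergodic theorem and Assumption \ref{H5}, the remainder by Markov's inequality via Assumption \ref{third_or}, and the penalty increments by the reverse triangle inequality restricted to active coefficients and active groups. The only difference is presentational (you collect all terms into a single lower bound on $D_T(\uu)$ rather than splitting the failure event into five pieces each of probability at most $\eps/5$), which does not change the argument.
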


\begin{proof}
We denote $\nu_T = T^{-1/2} + \lambda_T T^{-1} a_T + \gamma_T T^{-1} b_T$, with $a_T = \text{card}(\Ac).\{\underset{k}{\max} \; \alpha_k\}$ and $b_T = \text{card}(\Ac).\{\underset{l}{\max} \; \xi_l\}$. We would like to prove that for any $\eps > 0$, there exists $C_{\eps} > 0$ such that
\beq \label{bound_prob_prove}
\Pb(\cfrac{1}{\nu_T}\|\hat{\theta} - \theta_0\| > C_{\eps}) < \eps.
\eeq
We have
\beqw
\Pb(\cfrac{1}{\nu_T} \|\hat{\theta} - \theta_0\| > C_{\eps}) \leq \Pb(\exists \uu \in \Rb^d, \|\uu\|_2 \geq C_{\eps}: \Gb_T \varphi(\theta_0 + \nu_T \uu) \leq \Gb_T \varphi(\theta_0)).
\eeqw
Furthermore, $\|\uu\|_2$ can potentially be large as it represents the discrepancy $\hat{\theta}-\theta_0$ normalized by $\nu_T$. Now based on the convexity of the objective function, we have
\beq \label{subset_G}
\{\exists \uu^*, \|\uu^*\|_2 \geq C_{\eps}, \Gb_T \varphi(\theta_0 + \nu_T \uu^*) \leq \Gb_T \varphi(\theta_0)\} \subset \{\exists \bar{\uu}, \|\bar{\uu}\|_2 = C_{\eps}, \Gb_T \varphi(\theta_0 + \nu_T \bar{\uu}) \leq \Gb_T \varphi(\theta_0)\},
\eeq
a relationship that allows us to work with a fixed $\|\uu\|_2$. Let us define $\theta_1 = \theta_0 + \nu_T \uu^*$ such that $\Gb_T \varphi(\theta_1) \leq \Gb_T \varphi(\theta_0)$. Let $\alpha \in (0,1)$ and $\theta= \alpha \theta_1 + (1-\alpha) \theta_0$. Then by convexity of $\Gb_T \varphi(.)$, we obtain
\beqw
\begin{array}{llll}
\Gb_T \varphi(\theta) & \leq & \alpha \Gb_T \varphi(\theta_1) + (1-\alpha) \Gb_T \varphi(\theta_0) \\
& \leq & \Gb_T \varphi(\theta_0).
\end{array}
\eeqw
We pick $\alpha$ such that $\|\bar{\uu}\| = C_{\eps}$ with $\bar{\uu} := \alpha \theta_1 + (1-\alpha) \theta_0$. Hence (\ref{subset_G}) holds, which implies
\beqw
\begin{array}{llll}
\Pb(\|\hat{\theta} - \theta_0 \| > C_{\eps} \nu_T) & \leq & \Pb(\exists \uu \in \Rb^d, \|\uu\|_2 \geq C_{\eps}: \Gb_T \varphi(\theta_0 + \nu_T \uu) \leq \Gb_T \varphi(\theta_0)) \\
& \leq & \Pb(\exists \uu, \|\uu\|_2 = C_{\eps}: \Gb_T \varphi(\theta_0 + \nu_T \bar{\uu}) \leq \Gb_T \varphi(\theta_0)).
\end{array}
\eeqw
Hence, we pick a $\uu$ such that $\|\uu\|_2 = C_{\eps}$. Using $\pp_1(\lambda_T,\alpha,0) = 0$ and $\pp_2(\gamma_T,\xi,0) = 0$, by a Taylor expansion to $\Gb_T l(\theta_0 + \nu_T \uu)$, under assumption \ref{H4}, we obtain
\beqw
\begin{array}{llll}
\Gb_T \varphi(\theta_0 + \nu_T \uu) - \Gb_T \varphi(\theta_0)& = & \nu_T \dot{\Gb}_T l(\theta_0) \uu + \cfrac{\nu^2_T}{2} \uu' \ddot{\Gb}_T l(\bar{\theta}) \uu + \cfrac{\nu^3_T}{6} \nabla'\{\uu' \ddot{\Gb}_T l(\bar{\theta}) \uu\} \uu \\
& + & \pp_1(\lambda_T,\alpha,\theta_T)-\pp_1(\lambda_T,\alpha,\theta_0) + \pp_2(\gamma_T,\xi,\theta_T)-\pp_2(\gamma_T,\xi,\theta_0),
\end{array}
\eeqw
where $\bar{\theta}$ is defined as $\|\bar{\theta} - \theta_0\| \leq \|\theta_T - \theta_0\|$.  We want to prove
\beq \label{boundprobaproof}
\begin{array}{llll}
\Pb(\exists \uu, \|\uu\|_2 = C_{\eps}: \dot{\Gb}_T l(\theta_0) \uu + \cfrac{\nu_T}{2} \Eb[\uu' \ddot{\Gb}_T l(\theta_0) \uu] + \cfrac{\nu_T}{2} \Rc_T(\theta_0) + \cfrac{\nu^2_T}{6} \nabla'\{\uu' \ddot{\Gb}_T l(\bar{\theta}) \uu\} \uu && \\
+ \nu^{-1}_T\{\pp_1(\lambda_T,\alpha,\theta_T)-\pp_1(\lambda_T,\alpha,\theta_0) + \pp_2(\gamma_T,\xi,\theta_T)-\pp_2(\gamma_T,\xi,\theta_0)\} \leq 0) < \eps, &&
\end{array}
\eeq
where $\Rc_T(\theta_0) = \overset{d}{\underset{k,l=1}{\sum}} \uu_k \uu_l \{\partial^2_{\theta_k \theta_l} \Gb_T l(\theta_0) - \Eb[\partial^2_{\theta_k \theta_l} \Gb_T l(\theta_0)]\}$. By assumption \ref{H1}, $(\epsilon_t)$ is a non anticipative stationary solution and is ergodic. As a square integrable martingale difference,
\beqw
\sqrt{T} \dot{\Gb}_T l(\theta_0) \uu \overset{d}{\longrightarrow} \Nc(0, \uu' \Mb \uu),
\eeqw
by the central limit theorem of Billingsley (1961), which implies $\dot{\Gb}_T l(\theta_0) \uu = O_p(T^{-1/2} \uu' \Mb \uu)$. By the ergodic theorem of Billingsley (1995), we have
\beqw
\ddot{\Gb}_T l(\theta_0) \overset{\Pb}{\underset{T \rightarrow \infty}{\longrightarrow}} \Hb.
\eeqw
This implies $\Rc_T(\theta_0) = o_p(1)$.
\mds

Furthermore, we have by the Markov inequality and for $b > 0$ that
\beqw
\begin{array}{llll}
\Pb(\exists \uu, \|\uu\|_2 = C_{\eps}: \underset{\bar{\theta}:\|\theta-\theta_0\|_2 \leq \nu_T C_{\eps}}{\sup}|\cfrac{\nu^2_T}{6} \nabla'\{\uu' \ddot{\Gb}_T l(\bar{\theta}) \uu\} \uu| > b) & \leq & \cfrac{\nu^4_T C^6_{\eps}}{36 b^2} \eta(C_{\eps}),
\end{array}
\eeqw
where $\eta(C_{\eps})$ is defined in assumption \ref{third_or}. We now focus on the penalty terms. As $\pp_1(\lambda_T,\alpha,0)=0$, for the $l^1$ norm penalty, we have
\bqan\label{BoundL1}
\pp_1(\lambda_T,\alpha,\theta_T) - \pp_1(\lambda_T,\alpha,\theta_0) & = & \lambda_T T^{-1} \underset{k \in \Sc}{\sum} \alpha_k \{\|\theta^{(k)}_0 + \nu_T \uu^{(k)}\|_1  - \|\theta^{(k)}_0\|_1 \}, \nonumber  \\
\text{and} \; |\pp_1(\lambda_T,\alpha,\theta_T) - \pp_1(\lambda_T,\alpha,\theta_0)| & \leq & \text{card}(\Sc) \{ \underset{k \in \Sc}{\max} \; \alpha_k \} \lambda_T T^{-1} \nu_T  \|\uu\|_1.
\eqan
As for the $l^1/l^2$ norm, we obtain
\bqan\label{BoundL2}
\pp_2(\gamma_T,\xi,\theta_T) - \pp_2(\gamma_T,\xi,\theta_0) & = & \gamma_T T^{-1} \underset{l \in \Sc}{\sum} \xi_l \{ \|\theta^{(l)}_T\|_2 - \|\theta^{(l)}_0\|_2\}, \nonumber \\
\text{and} \; |\pp_2(\gamma_T,\xi,\theta_T) - \pp_2(\gamma_T,\xi,\theta_0) |& \leq & \gamma_T T^{-1} \underset{l \in \Sc}{\sum} \xi_l \nu_T \|\uu^{(l)}\|_2 \nonumber \\
& \leq & \text{card}(\Sc) \{ \underset{l\in \Sc}{\max} \; \xi_l \} \gamma_T T^{-1} \nu_T \|\uu\|_2.
\eqan

\mds

Then denoting by $\delta_T = \lambda_{\min}(\Hb) C^2_{\eps} \nu_T$, and using $\cfrac{\nu_T}{2} \Eb[\uu' \ddot{\Gb}_T l(\theta_0) \uu] \geq \delta_T$, we deduce that (\ref{boundprobaproof}) can be bounded as
\beqw
\begin{array}{llll}
\Pb(\exists \uu, \|\uu\|_2 = C_{\eps}: \dot{\Gb}_T l(\theta_0) \uu + \cfrac{\nu_T}{2} \uu' \ddot{\Gb}_T l(\theta_0) \uu + \cfrac{\nu^2_T}{6} \nabla'\{\uu' \ddot{\Gb}_T l(\bar{\theta}) \uu\} \uu && \\
+ \nu^{-1}_T \{\pp_1(\lambda_T,\alpha,\theta_T)-\pp_1(\lambda_T,\alpha,\theta_0) + \pp_2(\gamma_T,\xi,\theta_T)-\pp_2(\gamma_T,\xi,\theta_0) \} \leq 0)  && \\
\leq \Pb(\exists \uu, \|\uu\|_2 = C_{\eps}: |\dot{\Gb}_T l(\theta_0) \uu| > \delta_T/8) + \Pb(\exists \uu, \|\uu\|_2 = C_{\eps}: \cfrac{\nu_T}{2} |\Rc_T(\theta_0)| > \delta_T/8) && \\
+ \Pb(\exists \uu, \|\uu\|_2 = C_{\eps}:| \cfrac{\nu^2_T}{6} \nabla'\{\uu' \ddot{\Gb}_T l(\bar{\theta}) \uu\} \uu |> \delta_T/8) && \\
+ \Pb(\exists \uu, \|\uu\|_2 = C_{\eps}: |\pp_1(\lambda_T,\alpha,\theta_T)-\pp_1(\lambda_T,\alpha,\theta_0)| > \nu_T \delta_T/8) && \\
+ \Pb(\exists \uu, \|\uu\|_2 = C_{\eps}: |\pp_2(\gamma_T,\xi,\theta_T)-\pp_2(\gamma_T,\xi,\theta_0)| > \nu_T \delta_T/8). &&
\end{array}
\eeqw
We also have for $C_{\eps}$ and $T$ large enough, and using norm equivalences that
\beqw
\begin{array}{llll}
\Pb(\exists \uu, \|\uu\|_2 = C_{\eps}: |\pp_1(\lambda_T,\alpha,\theta_T)-\pp_1(\lambda_T,\alpha,\theta_0)| > \nu_T \delta_T/8) && \\
\leq \Pb(\exists \uu, \|\uu\|_2 = C_{\eps}: \text{card}(\Sc) \{ \underset{k \in \Sc}{\max} \; \alpha_k \} \lambda_T T^{-1} \nu_T  \|\uu\|_1  > \nu_T \delta_T/8) < \eps/5, &&\\
\Pb(\exists \uu, \|\uu\|_2 = C_{\eps}: |\pp_2(\gamma_T,\xi,\theta_T)-\pp_2(\gamma_T,\xi,\theta_0)| > \nu_T \delta_T/8)  && \\
\leq \Pb(\exists \uu, \|\uu\|_2 = C_{\eps}: \text{card}(\Sc) \{ \underset{l\in \Sc}{\max} \; \xi_l \} \gamma_T T^{-1} \nu_T \|\uu\|_2 > \nu_T \delta_T/8) < \eps/5.&&
\end{array}
\eeqw

\mds

Moreover, if $\nu_T = T^{-1/2} + \lambda_T T^{-1} a_T + \gamma_T T^{-1} b_T$, then for $C_{\eps}$ large enough
\beqw
\begin{array}{llll}
\Pb(\exists \uu, \|\uu\|_2 = C_{\eps}: |\dot{\Gb}_T l(\theta_0) \uu| > \delta_T/8) & \leq & \cfrac{C^2_{\eps} C_{st}}{T \delta^2_T} \\
& \leq &  \cfrac{C_{st}}{C^4_{\eps}} < \eps/5.
\end{array}
\eeqw
Moreover
\beqw
\begin{array}{llll}
\Pb(\exists \uu, \|\uu\|_2 = C_{\eps}: \underset{\bar{\theta}:\|\bar{\theta}-\theta_0\|_2 < \nu_T C_{\eps}}{\sup}|\cfrac{\nu^2_T}{6} \nabla'\{\uu' \ddot{\Gb}_T l(\bar{\theta}) \uu\} \uu |> \delta_T/8) & \leq & \cfrac{C_{st} \nu^4_T \eta(C_{\eps})}{\delta^2_T} \\
& \leq & C_{st} \nu^2_T C^2_{\eps} \eta(C_{\eps})
\end{array}
\eeqw
where $C_{st} > 0$ is a generic constant. Consequently, we obtain, for $T$ and $C_{\eps}$ large enough, we obtain
\beqw
\begin{array}{llll}
\Pb(\exists \uu, \|\uu\|_2 = C_{\eps}: |\dot{\Gb}_T l(\theta_0) \uu| > \delta_T/8) + \Pb(\exists \uu, \|\uu\|_2 = C_{\eps}: \cfrac{\nu_T}{2} |\Rc_T(\theta_0)| > \delta_T/8) && \\
+ \Pb(\exists \uu, \|\uu\|_2 = C_{\eps}:| \cfrac{\nu^2_T}{6} \nabla'\{\uu' \ddot{\Gb}_T l(\bar{\theta}) \uu\} \uu |> \delta_T/8) &&\\
+ \Pb(\exists \uu, \|\uu\|_2 = C_{\eps}: |\pp_1(\lambda_T,\alpha,\theta_0)-\pp_1(\lambda_T,\alpha,\theta_T)| > \nu_T \delta_T/8) && \\
+ \Pb(\exists \uu, \|\uu\|_2 = C_{\eps}: |\pp_2(\gamma_T,\xi,\theta_0)-\pp_2(\gamma_T,\xi,\theta_T)| > \nu_T \delta_T/8) + 0 && \\
\leq \cfrac{C_{st}}{C^4_{\eps}} + \nu^2_T C^2_{\eps} \eta(C_{\eps}) C_{st} + 3 \eps/5 && \\
\leq \eps,
\end{array}
\eeqw
for $C_{\eps}$ sufficiently large, and $T$ large enough. We then deduce
\beqw
\|\hat{\theta} - \theta_0\| = O_p(\nu_T) = O_p(\lambda_T T^{-1} a_T + \gamma_T T^{-1} b_T + T^{-1/2}).
\eeqw
\end{proof}

\bre
We would like to highlight the use of the convexity property of $\Gb_T \varphi(.)$. It allowed us to obtain the upper bound (\ref{boundprobaproof}). Otherwise, the inequality would have been uniform over $\|\uu\|_2 \geq C_{\eps}$. A consequence is that $\|\uu\|_2$ can take significantly large values, which would have made the control of the random part in the Taylor expansion hard. This issue is overcome thanks to the convexity that allows for working with fixed $\|\uu\|_2$, as Fan and Li (2001), Fan and Peng (2004) or Nardi and Rinaldo (2008) do.
\ere

\mds

We now focus on the distribution of the SGL estimator. Deriving the asymptotic distribution for M-estimators is standard in the case the objective function is differentiable. It consists of characterizing the estimator by the orthogonality conditions and derive a linear representation by Taylor expansions of the estimator. But these techniques do not apply when the objective function is not differentiable. In our case, $\varphi(.)$ is not differentiable at $0$ due to the penalty terms. In some specific context, it may be possible to treat the non-differentiability of $\Gb_T \varphi(.)$ by applying the expectation operator $\Eb[.]$ to $\varphi(.)$, which then becomes differentiable in $\theta_0$. Then Taylor expansions are feasible and one obtains the distribution, provided some regularity conditions of the empirical criterion, such as stochastic equi-continuity: see Andrews (1994, a,b). This approach works for specific loss functions, such as the LAD. But in our setting, the expectation operator fails at regularizing $\varphi(.)$ due to the penalty functionals.

\mds

Another approach to obtain the asymptotic distribution relies on the convexity property of $\varphi(.)$, and hence of $\Gb_T \varphi(.)$, without assuming strong regularity conditions on $\varphi(.)$. The intuition behind this rather strong statement is as follows. Let $\Fb_T(\uu)$ and $\Fb_{\infty}(\uu)$, $\uu  \in \Rb^d$, be random convex functions such that their minimum are respectively $\uu_T$ and $\uu_{\infty}$. Then if $\Fb_T(.)$ converges in finite distribution to $\Fb_{\infty}(.)$, and $\uu_{\infty}$ is the unique minimum of $\Fb_{\infty}$ with probability one, then $\uu_T$ converges weakly to $\uu_{\infty}$. This method to prove the convergence of $\arg \, \min$ processes is called the \emph{convexity argument}. It was developed by Davis and al. (1992), Hjort, Pollard (1993), Geyer (1996a, 1996b) or Kato (2009). Chernozhukov and Huong (2004), Chernozhukhov (2005) use this convexity argument to obtain the asymptotic distribution of quantile regression type estimators. The convexity argument only requires the lower-semicontinuity and convexity of the empirical criterion. The convexity Lemma, as in Chernozhukov (2005), can be stated as follows.
\begin{lemma} \label{convex_lemma} ( Chernozhukov, 2005) \\
Suppose \\
(i) a sequence of convex lower-semicontinuous $\Fb_T: \Rb^d \rightarrow \bar{\Rb}$ marginally converges to $\Fb_{\infty}: \Rb^d \rightarrow \bar{\Rb}$ over a dense subset of $\Rb^d$; \\
(ii) $\Fb_{\infty}$ is finite over a nonempty open set $E \subset \Rb^d$; \\
(iii) $\Fb_{\infty}$ is uniquely minimized at a random vector $\uu_{\infty}$. \\
Then
\beqw
\underset{\zz \in \Rb^d}{\arg \, \min} \, \Fb_T(\zz) \overset{d}{\longrightarrow} \underset{\zz \in \Rb^d}{\arg \, \min} \, \Fb_{\infty}(\zz), \, \text{that is} \; \uu_T \overset{d}{\longrightarrow} \uu_{\infty}.
\eeqw
\end{lemma}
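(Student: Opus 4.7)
\noindent
\textbf{Proof proposal for Lemma \ref{convex_lemma}.}
The plan is to reduce (i) to almost-sure pointwise convergence via Skorokhod representation, upgrade this to uniform convergence on compact subsets of $E$ by Rockafellar-type convex-function arguments (the same engine behind Lemma \ref{andersen_gill_lemma}), and then exploit convexity together with the uniqueness of the limiting minimizer to establish tightness of $\uu_T := \underset{\zz \in \Rb^d}{\arg \min}\, \Fb_T(\zz)$; the weak convergence $\uu_T \overset{d}{\longrightarrow} \uu_\infty$ then follows by a standard argmin subsequence argument.

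Concretely, I would first apply Skorokhod's representation theorem to the finite-dimensional convergence in (i): on some enlarged probability space there exist versions $\tilde{\Fb}_T$ and $\tilde{\Fb}_\infty$ with the same marginal laws such that $\tilde{\Fb}_T(\zz) \rightarrow \tilde{\Fb}_\infty(\zz)$ almost surely for every $\zz$ in the countable dense set $D \subset \Rb^d$ appearing in (i). Each $\tilde{\Fb}_T$ remains convex and lower-semicontinuous, so Theorem 10.8 of Rockafellar (1970) (the same ingredient used to prove Lemma \ref{andersen_gill_lemma}) delivers uniform convergence on every compact $K \subset E$:
\beqw
\underset{\zz \in K}{\sup}|\tilde{\Fb}_T(\zz) - \tilde{\Fb}_\infty(\zz)| \overset{\text{a.s.}}{\underset{T \rightarrow \infty}{\longrightarrow}} 0.
\eeqw
Openness of $E$ and finiteness of $\Fb_\infty$ on $E$ guarantee that $\Fb_\infty$ is continuous on $E$, which is what legitimizes this compact-set control.

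The delicate step is tightness of $\uu_T$. Pick $R > 0$ large enough that $\overline{B}(\uu_\infty,R) \subset E$ and define $\eta(R) := \underset{\|\zz - \uu_\infty\| = R}{\inf} \Fb_\infty(\zz) - \Fb_\infty(\uu_\infty)$, which is strictly positive by uniqueness in (iii) combined with the continuity of $\Fb_\infty$ on the compact sphere. The uniform convergence just established then gives, with probability approaching one, $\Fb_T(\zz) > \Fb_T(\uu_\infty) + \eta(R)/2$ for every $\zz$ with $\|\zz - \uu_\infty\| = R$. Convexity of $\Fb_T$ now rules out any minimizer outside $\overline{B}(\uu_\infty,R)$: if $\|\uu_T - \uu_\infty\| > R$, the segment joining $\uu_T$ to $\uu_\infty$ crosses the sphere at a point $\tilde{\zz}$ with $\Fb_T(\tilde{\zz}) \leq \max(\Fb_T(\uu_T),\Fb_T(\uu_\infty)) = \Fb_T(\uu_\infty)$, contradicting the strict inequality. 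Hence $\uu_T = O_p(1)$. With tightness in hand, every subsequence of $\uu_T$ admits a further subsequence converging weakly to some $\uu^\star$; the uniform convergence forces $\Fb_\infty(\uu^\star) = \underset{\zz \in \Rb^d}{\inf}\Fb_\infty(\zz)$, and uniqueness in (iii) gives $\uu^\star = \uu_\infty$, so that the whole sequence converges weakly to $\uu_\infty$. I expect the tightness step to be the main obstacle: convexity is indispensable there, since without the segment-crossing argument a minimizer of $\Fb_T$ could drift to infinity despite uniform convergence on compacts and uniqueness of the limiting minimum.
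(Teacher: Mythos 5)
You are reconstructing a result that the paper itself does not prove: Lemma \ref{convex_lemma} is imported from the convexity-argument literature (Chernozhukov, 2005; Geyer, 1996; Hjort and Pollard, 1993; Kato, 2009), so there is no in-paper proof to compare against. Your architecture --- Skorokhod representation to pass to almost-sure pointwise convergence on a countable dense set, Rockafellar's Theorem 10.8 to upgrade to uniform convergence on compacts of $E$, a segment-crossing convexity argument for tightness, and a subsequence argument to identify the limit --- is exactly the standard route, and most of it is sound (modulo the usual measurability care in transferring laws of the argmins back from the Skorokhod versions).

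The genuine gap is in the tightness step. You pick $R>0$ with $\overline{B}(\uu_\infty,R)\subset E$, which presupposes that the random minimizer $\uu_\infty$ lies in the open set $E$ on which $\Fb_\infty$ is finite. Hypotheses (i)--(iii) do not give this: a convex lower-semicontinuous function can attain its unique minimum on the boundary of its effective domain, outside every open set on which it is finite (take $\Fb_\infty(u)=-u$ on $[0,1]$ and $+\infty$ elsewhere, $E=(0,1)$, $\uu_\infty=1$). In that situation uniform convergence on compacts of $E$ controls neither $\Fb_T(\uu_\infty)$ nor $\Fb_T$ on a sphere around $\uu_\infty$, so both the segment-crossing comparison and the identification step (showing $\Fb_\infty(\uu^\star)=\inf_{\zz}\Fb_\infty(\zz)$ for a subsequential limit $\uu^\star$ that may sit outside $E$) break down. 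The standard repairs are either to add the hypothesis that $\uu_\infty$ lies in the interior of the effective domain almost surely, or to replace uniform convergence on compacts of $E$ by epi-convergence: pointwise convergence of convex l.s.c. functions on a dense set already yields epi-convergence, whose liminf inequality holds at every point of $\Rb^d$, and combined with the fact that a convex l.s.c. function with a unique minimizer automatically has bounded level sets (so the $\Fb_T$ are eventually uniformly level-bounded), this delivers both tightness and the identification without any interiority assumption. The distinction is not pedantic for this paper: in the proof of Theorem \ref{oracle1} the lemma is applied to a limit that is finite only on the subspace $\{\uu:\uu_{\Ac^c}=0\}$, which has empty interior in $\Rb^d$, so an argument that leans on balls contained in $E$ cannot cover the paper's own use of the lemma.
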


\begin{theorem}\label{asym_dist_Knight}
Under Assumptions \ref{H1}-\ref{third_or}, if $\lambda_T T^{-1/2} \rightarrow \lambda_0$ and $\gamma_T T^{-1/2} \rightarrow \gamma_0$, then
\beqw
\sqrt{T}(\hat{\theta} - \theta_0) \overset{d}{\longrightarrow} \underset{\uu \in \Rb^d}{\arg \; \min} \, \{ \Fb_{\infty}(\uu) \},
\eeqw
provided $\Fb_{\infty}$ is the random function in $\Rb^d$, where
\beqw
\begin{array}{llll}
\Fb_{\infty}(\uu) & = & \cfrac{1}{2}\uu' \Hb \uu + \uu' \ZZ + \lambda_0 \overset{m}{\underset{k = 1}{\sum}} \alpha_k \overset{\cc_k}{\underset{i = 1}{\sum}} \{ |\uu^{(k)}_i| \mathbf{1}_{\theta^{(k)}_{0,i} = 0} + \uu^{(k)}_i \text{sgn}(\theta^{(k)}_{0,i})\mathbf{1}_{\theta^{(k)}_{0,i} \neq 0} \}  \\
& + & \gamma_0 \overset{m}{\underset{l = 1}{\sum}} \xi_l \{\|\uu^{(l)}\|_2 \mathbf{1}_{\theta^{(l)}_{0} = \mathbf{0}} + \cfrac{\uu^{(l)'} \theta^{(l)}_0}{\|\theta^{(l)}_0\|_2} \mathbf{1}_{\theta^{(l)}_{0} \neq \mathbf{0}}\},
\end{array}
\eeqw
with $\Hb = \Hb(\theta_0) := \Eb[\nabla^2_{\theta \theta'} l(\eps_t;\theta_0)]$ and some random vector $\ZZ \sim \Nc(0,\Mb)$, $\Mb = \Mb(\theta_0):= \Eb[\nabla_{\theta} l(\eps_t;\theta_0) \nabla_{\theta'} l(\eps_t;\theta_0)]$.
\end{theorem}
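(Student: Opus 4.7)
The plan is to apply the convexity argument of Lemma \ref{convex_lemma} to the recentered and rescaled objective
\beqw
\Fb_T(\uu) = T\bigl\{\Gb_T \varphi(\theta_0 + \uu/\sqrt{T}) - \Gb_T \varphi(\theta_0)\bigr\}, \qquad \uu \in \Rb^d.
\eeqw
Each $\Fb_T$ is convex (since $\Gb_T \varphi(\cdot)$ is convex) and is minimized at $\hat{\uu}_T := \sqrt{T}(\hat{\theta} - \theta_0)$. If I can establish marginal convergence $\Fb_T \overset{d}{\longrightarrow} \Fb_\infty$ on a dense subset of $\Rb^d$ and check that $\Fb_\infty$ has an almost-surely unique minimizer, Lemma \ref{convex_lemma} will deliver $\hat{\uu}_T \overset{d}{\longrightarrow} \underset{\uu \in \Rb^d}{\arg \, \min} \Fb_\infty(\uu)$, which is the claim.

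For the loss part, a second-order Taylor expansion of $\Gb_T l$ around $\theta_0$ gives
\beqw
T\bigl\{\Gb_T l(\theta_0 + \uu/\sqrt{T}) - \Gb_T l(\theta_0)\bigr\} = \sqrt{T}\, \dot{\Gb}_T l(\theta_0)' \uu + \frac{1}{2} \uu' \ddot{\Gb}_T l(\theta_0) \uu + R_T(\uu),
\eeqw
where the cubic remainder satisfies $R_T(\uu) = o_\Pb(1)$ for each fixed $\uu$, using Assumption \ref{third_or} with scale $\nu_T = 1/\sqrt{T}$ (since $\|\bar\theta - \theta_0\| \leq \|\uu\|/\sqrt{T}$) and Markov's inequality on $T^{-1} \sum_t \upsilon_t(C)$. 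The martingale CLT from the remark following Assumption \ref{third_or} yields $\sqrt{T} \dot{\Gb}_T l(\theta_0) \overset{d}{\longrightarrow} \ZZ \sim \Nc(0,\Mb)$, and the ergodic theorem together with Assumption \ref{H5} gives $\ddot{\Gb}_T l(\theta_0) \overset{\Pb}{\longrightarrow} \Hb$. Slutsky's theorem then provides the pointwise limit $\uu' \ZZ + \frac{1}{2} \uu' \Hb \uu$ for the loss contribution.

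For the penalties, I write
\beqw
T\bigl\{\pp_1(\lambda_T,\alpha,\theta_0 + \uu/\sqrt{T}) - \pp_1(\lambda_T,\alpha,\theta_0)\bigr\} = \cfrac{\lambda_T}{\sqrt{T}} \overset{m}{\underset{k=1}{\sum}} \alpha_k \overset{\cc_k}{\underset{i=1}{\sum}} \sqrt{T}\bigl(|\theta_{0,i}^{(k)} + \uu_i^{(k)}/\sqrt{T}| - |\theta_{0,i}^{(k)}|\bigr),
\eeqw
and split the inner difference according to the sign of $\theta_{0,i}^{(k)}$: if $\theta_{0,i}^{(k)} \neq 0$, the sign is locally constant, so for $T$ large enough the difference equals $\uu_i^{(k)} \text{sgn}(\theta_{0,i}^{(k)})$; if $\theta_{0,i}^{(k)} = 0$, exact positive homogeneity gives $|\uu_i^{(k)}|$. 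Multiplying by $\lambda_T/\sqrt{T} \to \lambda_0$ produces the $l^1$ summand of $\Fb_\infty$. The parallel analysis of $\pp_2$ uses the differentiability of $\theta^{(l)} \mapsto \|\theta^{(l)}\|_2$ on $\{\theta_0^{(l)} \neq \mathbf{0}\}$, which gives $\uu^{(l)'} \theta_0^{(l)}/\|\theta_0^{(l)}\|_2$, together with the exact identity $\sqrt{T}\|\uu^{(l)}/\sqrt{T}\|_2 = \|\uu^{(l)}\|_2$ on $\{\theta_0^{(l)} = \mathbf{0}\}$; after multiplying by $\gamma_T/\sqrt{T} \to \gamma_0$, this delivers the group summand.

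Combining these pieces via Slutsky, and using the Cram\'er--Wold device to upgrade pointwise convergence into finite-dimensional convergence (immediate because the only random piece $\ZZ$ enters $\Fb_\infty$ linearly in $\uu$), I obtain $\Fb_T \overset{d}{\longrightarrow} \Fb_\infty$ marginally over $\Rb^d$. The quadratic part of $\Fb_\infty$ is strictly convex by Assumption \ref{H5} and the penalty limits are convex, so $\Fb_\infty$ is strictly convex and has a unique minimizer almost surely; Lemma \ref{convex_lemma} then concludes. The step I expect to be the main obstacle is the $o_\Pb(1)$ control of the cubic Taylor remainder, which relies on Assumption \ref{third_or} supplying a uniform third-derivative moment bound over a $1/\sqrt{T}$-neighborhood of $\theta_0$; everything else (the score CLT, ergodic convergence of the Hessian, and the active/inactive coordinate splitting in the penalty limits) either is classical or mirrors Knight and Fu (2000).
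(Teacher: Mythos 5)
Your proposal follows essentially the same route as the paper's own proof: the same recentered convex process $\Fb_T$, the same Taylor expansion with the Billingsley CLT for the score, the ergodic theorem for the Hessian, Assumption \ref{third_or} for the cubic remainder, the same active/inactive splitting of the two penalty increments, and the same appeal to the convexity lemma. Your explicit remarks on Cram\'er--Wold for finite-dimensional convergence and on the almost-sure uniqueness of the minimizer of $\Fb_{\infty}$ are small but welcome additions that the paper leaves implicit.
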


\begin{proof}
Let $\uu \in \Rb^d$ such that $\theta = \theta_0 + \uu/T^{1/2}$ and we define the empirical criterion $\Fb_T(\uu) = T \Gb_T (\varphi(\theta_0 + \uu/T^{1/2}) - \varphi(\theta_0))$. First, we are going to prove the finite distributional convergence of $\Fb_T$ to $\Fb_{\infty}$. Then we use the convexity of $\Fb_T(.)$ to obtain the convergence in distribution of the $\arg \, \min$ empirical criterion to the $\arg \, \min$ process limit. To do so, let $\uu = \sqrt{T}(\theta - \theta_0)$. We have
\beqw
\begin{array}{llll}
\Fb_T(\uu) & = & T \{\Gb_T (l(\theta) - l(\theta_0)) + \pp_1(\lambda_T,\alpha,\theta) - \pp_1(\lambda_T,\alpha,\theta_0) + \pp_2(\gamma_T,\xi,\theta)-\pp_2(\gamma_T,\xi,\theta_0) \} \\
& = & T \Gb_T (l(\theta_0 + \uu/T^{1/2}) - l(\theta_0)) + \lambda_T \overset{m}{\underset{k = 1}{\sum}} \alpha_k [\|\theta^{(k)}_0 + \uu^{(k)}/\sqrt{T}\|_1 - \|\theta^{(k)}_0\|_1] \\
& + & \gamma_T \overset{m}{\underset{l = 1}{\sum}} \xi_l [\|\theta^{(l)}_0 + \uu^{(l)}/\sqrt{T}\|_2 - \|\theta^{(l)}_0\|_2],
\end{array}
\eeqw
where $\Fb_T(.)$ is convex and $C^0(\Rb^d)$. We now prove the finite dimensional distribution of $\Fb_T$ to $\Fb_{\infty}$ to apply Lemma \ref{convex_lemma}. For the $l^1$ penalty, for any group $k$, we have for $T$ sufficiently large
\beqw
\|\theta^{(k)}_0 + \uu^{(k)}/\sqrt{T}\|_1 - \|\theta^{(k)}_0\|_1 = T^{-1/2} \overset{\cc_k}{\underset{i = 1}{\sum}} \{ |\uu^{(k)}_i| \mathbf{1}_{\theta^{(k)}_{0,i} = 0} + \uu^{(k)}_i \text{sgn}(\theta^{(k)}_{0,i})\mathbf{1}_{\theta^{(k)}_{0,i} \neq 0} \},
\eeqw
which implies that
\beqw
\lambda_T \overset{m}{\underset{k = 1}{\sum}} \alpha_k [\|\theta^{(k)}_0 + \uu^{(k)}/\sqrt{T}\|_1 - \|\theta^{(k)}_0\|_1] \underset{T \rightarrow \infty}{\longrightarrow} \lambda_0 \overset{m}{\underset{k = 1}{\sum}} \alpha_k \overset{\cc_k}{\underset{i = 1}{\sum}} \{ |\uu^{(k)}_i| \mathbf{1}_{\theta^{(k)}_{0,i} = 0} + \uu^{(k)}_i \text{sgn}(\theta^{(k)}_{0,i})\mathbf{1}_{\theta^{(k)}_{0,i} \neq 0} \},
\eeqw
under the condition that $\lambda_T / \sqrt{T} \rightarrow \lambda_0$.

\mds

As for the $l^1/l^2$ quantity, for any group $l$, we have
\beqw
\|\theta^{(l)}_0 + u^{(l)}/\sqrt{T}\|_2 - \|\theta^{(l)}_0\|_2 = T^{-1/2} \{\|u^{(l)}\|_2 \mathbf{1}_{\theta^{(l)}_{0} = \mathbf{0}} + \cfrac{u^{(l)'} \theta^{(l)}_0}{\|\theta^{(l)}_0\|_2} \mathbf{1}_{\theta^{(l)}_{0} \neq \mathbf{0}}\} + o(T^{-1}).
\eeqw
Consequently, if $\gamma_T T^{-1/2} \rightarrow \gamma_0 \geq 0$, we obtain
\beqw
\gamma_T \overset{m}{\underset{l = 1}{\sum}} \xi_l [\|\theta^{(l)}_0 + u^{(l)}/\sqrt{T}\|_2 - \|\theta^{(l)}_0\|_2 ] = \gamma_0 \overset{m}{\underset{l = 1}{\sum}} \xi_l \{\|u^{(l)}\|_2 \mathbf{1}_{\theta^{(l)}_{0,k} = \mathbf{0}} + \cfrac{u^{(l)'} \theta^{(l)}_0}{\|\theta^{(l)}_0\|_2} \mathbf{1}_{\theta^{(l)}_0 \neq \mathbf{0}}\} + o(T^{-1}) \gamma_T.
\eeqw

Now for the unpenalized criterion $\Gb_T l(.)$, by a Taylor expansion, we have
\beqw
T \Gb_T (l(\theta_0 + \uu/T^{1/2}) - l(\theta_0)) = \uu' T^{1/2}\dot{\Gb}_T l(\theta_0) + \cfrac{1}{2} \uu' \ddot{\Gb}_T l(\theta_0) \uu + \frac{1}{6 T^{1/3}}\nabla'\{\uu' \ddot{\Gb}_T l(\bar{\theta}) \uu\} \uu,
\eeqw
where $\bar{\theta}$ is defined as $\|\bar{\theta} - \theta_0\| \leq \|\uu\|/\sqrt{T}$. Then by Assumption \ref{H4}, we have the central limit theorem of Billingsley (1961)
\beqw
\sqrt{T} \dot{\Gb}_T l(\theta_0) \overset{d}{\longrightarrow} \Nc(0,\Mb),
\eeqw
and by the ergodic theorem
\beqw
\ddot{\Gb}_T l(\theta_0) \overset{\Pb}{\underset{T \rightarrow \infty}{\longrightarrow}} \Hb.
\eeqw
Furthermore, we have by assumption \ref{third_or}
\beqw
\begin{array}{llll}
|\nabla'\{\uu' \ddot{\Gb}_T l(\bar{\theta}) \uu\} \uu |^2 & \leq & \cfrac{1}{T^2} \overset{T}{\underset{t,t'=1}{\sum}} \overset{d}{\underset{k_1,l_1,m_1}{\sum}}\overset{d}{\underset{k_2,l_2,m_2}{\sum}} \uu_{k_1} \uu_{l_1} \uu_{m_1} \uu_{k_2} \uu_{l_2} \uu_{m_2}  |\partial^3_{\theta_{k_1} \theta_{l_1} \theta_{m_1}} l(\eps_t;\bar{\theta}) . \partial^3_{\theta_{k_2} \theta_{l_2} \theta_{m_2}} l(\eps_{t'};\bar{\theta}) |\\
& \leq & \cfrac{1}{T^2} \overset{T}{\underset{t,t'=1}{\sum}} \overset{d}{\underset{k_1,l_1,m_1}{\sum}}\overset{d}{\underset{k_2,l_2,m_2}{\sum}} \uu_{k_1} \uu_{l_1} \uu_{m_1} \uu_{k_2} \uu_{l_2} \uu_{m_2} \upsilon_t(C) \upsilon_{t'}(C),
\end{array}
\eeqw
for $C$ large enough, such that the $\upsilon_t(C) = \underset{k,l,m=1,\cdots,d}{\sup} \{ \underset{\theta:\|\theta-\theta_0\|_2 \leq \nu_T C}{\sup} |\partial^3_{\theta_k \theta_l \theta_m} l(\eps_t;\theta)|\}$ with $\nu_T = T^{-1/2} + \lambda_T T^{-1} a_T + \gamma_T T^{-1} b_T$. We deduce
\beqw
\nabla'\{\uu' \ddot{\Gb}_T l(\bar{\theta}) \uu\} \uu  = O_p(\|\uu\|^3_2 \eta(C)).
\eeqw
Consequently, we obtain
\beqw
\frac{1}{6 T^{1/3}}\nabla'\{\uu' \ddot{\Gb}_T l(\bar{\theta}) \uu\} \uu \overset{\Pb}{\underset{T \rightarrow \infty}{\longrightarrow}} 0.
\eeqw
Then we proved that $\Fb_T(\uu) \overset{d}{\longrightarrow} \Fb_{\infty}(\uu)$, for a fixed $\uu$. Let us observe that
\beqw
\uu^*_T = \underset{\uu}{\arg \; \min} \, \{ \Fb_T(\uu) \},
\eeqw
and $\Fb_T(.)$ admits as a minimizer $\uu^*_T = \sqrt{T}(\hat{\theta} - \theta_0)$. As $\Fb_T$ is convex and $\Fb_{\infty}$ is continuous, convex and has a unique minimum, then by the convexity Lemma \ref{convex_lemma}, we obtain
\beqw
\sqrt{T}(\hat{\theta} - \theta_0) = \underset{\uu}{\arg \; \min} \{ \Fb_T \}  \overset{d}{\longrightarrow} \underset{\uu}{\arg \; \min} \{ \Fb_{\infty} \}.
\eeqw

\end{proof}

\begin{theorem}\label{asym_dist_bias}
Under assumptions \ref{H1}-\ref{third_or}, if $\gamma_T T^{-1} \rightarrow 0$ and $\gamma_T T^{-1/2} \rightarrow \infty$ such that $\lambda_T \gamma^{-1}_T \rightarrow \mu_0$, with $\mu_0 \geq 0$, then
\beqw
\cfrac{T}{\gamma_T} (\hat{\theta} - \theta_0) \overset{d}{\longrightarrow} \underset{\uu}{\arg \; \min} \, \{ \Kb_{\infty}(\uu) \},
\eeqw
provided $\Kb_{\infty}$ is a uniquely defined random function in $\Rb^d$, where
\beqw
\begin{array}{llll}
\Kb_{\infty}(\uu) & = & \cfrac{1}{2}\uu' \Hb \uu + \mu_0 \overset{m}{\underset{k = 1}{\sum}} \alpha_k \{ \|\uu^{(k)} \|_1\mathbf{1}_{\theta^{(k)}_0 = 0} + \uu^{(k) '} \text{sgn}(\theta^{(k)}_0)\mathbf{1}_{\theta^{(k)}_0 \neq 0} \}  \\
& + &  \overset{m}{\underset{l = 1}{\sum}} \xi_l \{\|\uu^{(l)}\|_2 \mathbf{1}_{\theta^{(l)}_{0} = \mathbf{0}} + \cfrac{\uu^{(l)'} \theta^{(l)}_0}{\|\theta^{(l)}_0\|_2} \mathbf{1}_{\theta^{(l)}_{0} \neq \mathbf{0}}\}.
\end{array}
\eeqw
The limit quantity $\Kb_{\infty}(.)$ is non-random, which implies that the convergence in distribution implies the convergence in probability $\cfrac{T}{\gamma_T} (\hat{\theta} - \theta_0) \overset{\Pb}{\longrightarrow} \underset{\uu}{\arg \; \min} \, \{ \Kb_{\infty}(\uu) \}$ by Shiryaev (ex 7, p 259, 1995).
\end{theorem}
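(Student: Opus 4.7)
The plan is to mimic the proof of Theorem \ref{asym_dist_Knight} but with the new scaling dictated by $\gamma_T$. I would define the rescaled empirical criterion
\beqw
\Kb_T(\uu) = \cfrac{T^2}{\gamma_T^2}\Bigl\{\Gb_T\varphi\bigl(\theta_0 + \tfrac{\gamma_T}{T}\uu\bigr) - \Gb_T\varphi(\theta_0)\Bigr\}, \quad \uu \in \Rb^d,
\eeqw
which is convex and continuous in $\uu$, and whose minimiser is exactly $\uu^*_T := \frac{T}{\gamma_T}(\hat\theta - \theta_0)$. The strategy is to show pointwise (finite-dimensional) convergence $\Kb_T(\uu) \overset{d}{\longrightarrow} \Kb_\infty(\uu)$ for every fixed $\uu$, then invoke the convexity Lemma \ref{convex_lemma} to pass to the convergence of the argmin. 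Since the limit $\Kb_\infty$ is non-random, the last sentence of the theorem follows from Shiryaev as announced.

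For the unpenalised part I would Taylor expand at $\theta_0$, using Assumption \ref{H3} and the third-order control of Assumption \ref{third_or}, to obtain
\beqw
\cfrac{T^2}{\gamma_T^2}\Gb_T\bigl(l(\theta_0 + \tfrac{\gamma_T}{T}\uu) - l(\theta_0)\bigr) = \cfrac{T}{\gamma_T}\dot\Gb_T l(\theta_0)\,\uu + \tfrac{1}{2}\uu'\ddot\Gb_T l(\theta_0)\uu + \cfrac{\gamma_T}{6T}\nabla'\bigl\{\uu'\ddot\Gb_T l(\bar\theta)\uu\bigr\}\uu.
\eeqw
Here is the crucial observation that drives the whole theorem: by Assumption \ref{H4} together with the Billingsley CLT, $\sqrt{T}\,\dot\Gb_T l(\theta_0) = O_p(1)$, so the first term is $O_p(\sqrt{T}/\gamma_T) = o_p(1)$ precisely because $\gamma_T T^{-1/2} \to \infty$. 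This is exactly what kills the Gaussian $\ZZ$-term that was present in Theorem \ref{asym_dist_Knight}. The Hessian term converges in probability to $\tfrac{1}{2}\uu'\Hb\uu$ by the ergodic theorem, and Assumption \ref{third_or} together with $\gamma_T/T \to 0$ makes the cubic remainder $o_p(1)$, exactly as in the proof of Theorem \ref{asym_dist_Knight}.

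For the penalty terms, I would treat each norm groupwise. For the $\ell^1$ penalty, the identity
\beqw
\cfrac{T^2}{\gamma_T^2}\bigl\{\pp_1(\lambda_T,\alpha,\theta_0 + \tfrac{\gamma_T}{T}\uu) - \pp_1(\lambda_T,\alpha,\theta_0)\bigr\} = \cfrac{\lambda_T}{\gamma_T}\overset{m}{\underset{k=1}{\sum}}\alpha_k\cdot\cfrac{T}{\gamma_T}\bigl\{\|\theta^{(k)}_0 + \tfrac{\gamma_T}{T}\uu^{(k)}\|_1 - \|\theta^{(k)}_0\|_1\bigr\}
\eeqw
together with the componentwise expansion already used in Theorem \ref{asym_dist_Knight} (valid for $T$ large, since $\gamma_T/T \to 0$) and $\lambda_T/\gamma_T \to \mu_0$ yields the first penalty line of $\Kb_\infty$. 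For the $\ell^1/\ell^2$ penalty, an analogous rescaling gives $\frac{T}{\gamma_T}\{\|\theta^{(l)}_0 + \frac{\gamma_T}{T}\uu^{(l)}\|_2 - \|\theta^{(l)}_0\|_2\}$ per group; on active groups ($\theta^{(l)}_0 \neq \mathbf 0$) the $\ell^2$-norm is differentiable so a Taylor expansion produces $\uu^{(l)'}\theta^{(l)}_0/\|\theta^{(l)}_0\|_2 + o(1)$, while on inactive groups homogeneity of the norm gives $\|\uu^{(l)}\|_2$ exactly. Summing yields the second line of $\Kb_\infty$.

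Having established finite-dimensional convergence $\Kb_T(\uu) \overset{d}{\to} \Kb_\infty(\uu)$ on a dense subset of $\Rb^d$ (say $\Qb^d$), I would verify the hypotheses of Lemma \ref{convex_lemma}: $\Kb_T$ is convex and lower-semicontinuous; $\Kb_\infty$ is finite and convex on $\Rb^d$; uniqueness of the minimiser follows from the strict convexity of $\uu \mapsto \tfrac{1}{2}\uu'\Hb\uu$ (Assumption \ref{H5}) plus convexity of the remaining penalty pieces. The convexity lemma then delivers $\uu^*_T = \frac{T}{\gamma_T}(\hat\theta - \theta_0) \overset{d}{\to} \arg\min \Kb_\infty$, and since $\Kb_\infty$ is non-random the conclusion strengthens to convergence in probability. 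The main obstacle I anticipate is bookkeeping around the $\ell^1/\ell^2$ expansion at groups where $\theta^{(l)}_0 = \mathbf 0$: one must distinguish this case from the active one because the $\ell^2$ norm is not differentiable there, but the exact identity $\|\frac{\gamma_T}{T}\uu^{(l)}\|_2 = \frac{\gamma_T}{T}\|\uu^{(l)}\|_2$ sidesteps the issue without remainder.
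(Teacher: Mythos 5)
Your proposal is correct and follows essentially the same route as the paper: the same rescaling $\Kb_T(\uu)=\frac{T^2}{\gamma_T^2}\{\Gb_T\varphi(\theta_0+\frac{\gamma_T}{T}\uu)-\Gb_T\varphi(\theta_0)\}$, the same Taylor expansion in which the score term is $O_p(\sqrt{T}/\gamma_T)=o_p(1)$ because $\gamma_T T^{-1/2}\to\infty$, the same groupwise treatment of the two penalties under $\lambda_T\gamma_T^{-1}\to\mu_0$, and the same appeal to the convexity Lemma \ref{convex_lemma} to pass to the argmin. No gaps.
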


\begin{proof}
To prove this convergence result, we proceed as in Theorem \ref{asym_dist_Knight}. To do so, we define $\theta = \theta_0 + \uu \gamma_T /T$ and we prove that $\tilde{\Fb}_T (\uu) = \Gb_T (\varphi(\theta_0 + \uu T/\gamma_T)-\varphi(\theta_0))$ converges in finite distribution to $\Kb_{\infty}(.)$. We have
\beqw
\begin{array}{llll}
\tilde{\Fb}_T (\uu) & = & T \{\Gb_T (l(\theta) - l(\theta_0)) + \pp_1(\lambda_T,\alpha,\theta) - \pp_1(\lambda_T,\alpha,\theta_0) + \pp_2(\gamma_T,\xi,\theta)-\pp_2(\gamma_T,\xi,\theta_0) \} \\
& = & T \Gb_T (l(\theta_0 + \uu \gamma_T /T) - l(\theta_0)) + \lambda_T \overset{m}{\underset{k = 1}{\sum}} \alpha_k [\|\theta^{(k)}_0 + \uu^{(k)} \gamma_T /T\|_1 - \|\theta^{(k)}_0\|_1] \\
& + & \gamma_T \overset{m}{\underset{l = 1}{\sum}} \xi_l [\|\theta^{(l)}_0 + \uu^{(l)} \gamma_T /T\|_2 - \|\theta^{(l)}_0\|_2].
\end{array}
\eeqw
For the unpenalized empirical criterion, we have the expansion
\beqw
T \Gb_T (l(\theta_0 + \uu \gamma_T /T) - l(\theta_0)) = \gamma_T \dot{\Gb}_T l(\theta_0) \uu + \cfrac{\gamma^2_T}{2T} \uu' \ddot{\Gb}_T l(\theta_0) \uu + \cfrac{\gamma^3_T}{6T^2} \nabla \{\uu' \ddot{\Gb}_T l(\bar{\theta}) \uu\} \uu,
\eeqw
where $\bar{\theta}$ lies between $\theta_0$ and $\theta_0 + \uu \gamma_T /T$. This implies $\tilde{\Fb}_T(\uu) = \frac{\gamma^2_T}{T} \Kb_T(\uu)$, where
\beqw
\begin{array}{llll}
\Kb_T(\uu) & = & \frac{\sqrt{T}}{\gamma_T}( \sqrt{T} \dot{\Gb}_T l(\theta_0) \uu) + \frac{1}{2}\uu' \ddot{\Gb}_T l(\bar{\theta}) \uu + \frac{\gamma_T}{6 T} \nabla'\{\uu' \ddot{\Gb}_T l(\bar{\theta}) \uu\} \uu\\
& + & \frac{T}{\gamma^2_T} \lambda_T \overset{m}{\underset{k = 1}{\sum}} \alpha_k [\|\theta^{(k)}_0 + \uu^{(k)} \gamma_T /T \|_1 - \|\theta^{(k)}_0\|_1 ] + \frac{T}{\gamma_T} \overset{m}{\underset{l = 1}{\sum}} \xi_l [ \|\theta^{(l)}_0 + u^{(l)} \gamma_T /T\|_2 - \|\theta^{(l)}_0\|_2 ].
\end{array}
\eeqw
We first focus on the penalty terms. For the $l^1$ part, for any group $k$, we have
\beqw
\|\theta^{(k)}_0 + \uu^{(k)} \gamma_T /T \|_1 - \|\theta^{(k)}_0\|_1 = \gamma_T T^{-1} \overset{\cc_k}{\underset{i = 1}{\sum}} \{ |\uu^{(k)}_i| \mathbf{1}_{\theta^{(k)}_{0,i} = 0} + \uu^{(k)}_i \text{sgn}(\theta^{(k)}_{0,i})\mathbf{1}_{\theta^{(k)}_{0,i} \neq 0} \}.
\eeqw
We deduce that
\beqw
\frac{T}{\gamma^2_T} \lambda_T \alpha_k [\|\theta^{(k)}_0 + \uu^{(k)} \gamma_T /T \|_1 - \|\theta^{(k)}_0\|_1 ] \rightarrow \mu_0 \overset{\cc_k}{\underset{i = 1}{\sum}} \alpha_k \{ |\uu^{(k)}_i| \mathbf{1}_{\theta^{(k)}_{0,i} = 0} + \uu^{(k)}_i \text{sgn}(\theta^{(k)}_{0,i})\mathbf{1}_{\theta^{(k)}_{0,i} \neq 0} \},
\eeqw
under the condition $\lambda_T \gamma^{-1}_T \rightarrow \mu_0$.

\mds

As for the $l^1/l^2$ quantity, for any group $l$, we have
\beqw
\|\theta^{(l)}_0 + u^{(l)} \gamma_T /T\|_2 - \|\theta^{(l)}_0\|_2 =  \gamma_T T^{-1} \{\|u^{(l)}\|_2 \mathbf{1}_{\theta^{(l)}_{0} = \mathbf{0}} + \cfrac{u^{(l)'} \theta^{(l)}_0}{\|\theta^{(l)}_0\|_2} \mathbf{1}_{\theta^{(l)}_{0} \neq \mathbf{0}}\} + o(T^{-1}).
\eeqw
Consequently, we obtain
\beqw
\frac{T}{\gamma_T}\xi_l[\|\theta^{(l)}_0 + u^{(l)} \gamma_T /T\|_2 - \|\theta^{(l)}_0\|_2] \rightarrow \xi_l \{\|u^{(l)}\|_2 \mathbf{1}_{\theta^{(l)}_{0} = \mathbf{0}} + \cfrac{u^{(l)'} \theta^{(l)}_0}{\|\theta^{(l)}_0\|_2} \mathbf{1}_{\theta^{(l)}_{0} \neq \mathbf{0}}\}.
\eeqw

\mds

Now for the unpenalized part, by the central limit theorem of Billingsley (1961), $ \sqrt{T} \dot{\Gb}_T l(\theta_0)$ is asymptotically normal, then $\gamma_T T^{-1/2} \rightarrow \infty$ implies by the Slutsky theorem
\beqw
\frac{\sqrt{T}}{\gamma_T}( \sqrt{T} \dot{\Gb}_T l(\theta_0) \uu ) \overset{\Pb}{\underset{T \rightarrow \infty}{\longrightarrow}} 0.
\eeqw
Furthermore, by the ergodic theorem of Billingsley (1961), we have
\beqw
\ddot{\Gb}_T l(\theta_0) \overset{\Pb}{\underset{T \rightarrow \infty}{\longrightarrow}} \Hb.
\eeqw
As for the third order term, by assumption \ref{third_or} and using the same reasoning as the proof of Theorem \ref{asym_dist_Knight}, we have
\beqw
\frac{\gamma_T}{6 T} \nabla'\{\uu' \ddot{\Gb}_T l(\bar{\theta}) \uu\} \uu \overset{\Pb}{\underset{T \rightarrow \infty}{\longrightarrow}} 0,
\eeqw
using $\gamma_T = o(T)$. Then we proved that $\Kb_T(\uu) \overset{d}{\longrightarrow} \Kb_{\infty}(\uu)$, for a fixed $\uu \in \Rb^d$. We have
\beqw
\uu^*_T = \underset{\uu}{\arg \, \min} \, \{\Kb_T (\uu)\},
\eeqw
and $\Kb_T(.)$ admits as a minimizer $\uu^*_T = \frac{T}{\gamma_T}(\hat{\theta}-\theta_0)$. $\Kb_T(.)$ is convex and $\Kb_{\infty}(.)$ is continuous, then by the convexity Lemma, we deduce
\beqw
\cfrac{T}{\gamma_T} (\hat{\theta}-\theta_0) = \arg \, \min \, \{\Kb_T\} \overset{d}{\longrightarrow} \arg \, \min \, \{\Kb_{\infty}\}.
\eeqw

\end{proof}

\bre
The convergence rate of $\hat{\theta}$ is slower than $\sqrt{T}$. Furthermore, the limiting function is not random.
\ere

We now turn to the oracle property of the SGL. Model selection consistency consists of evaluating the probability that $\{\hat{\Ac} = \Ac\}$, for $T$ large enough. That means we check that the regularization asymptotically allows for identifying the right model.

\begin{proposition}\label{proposition1}
Under assumption \ref{H1}-\ref{third_or}, if $\lambda_T T^{-1/2} \rightarrow \lambda_0$ and $\gamma_T T^{-1/2} \rightarrow \gamma_0$, then
\beqw
\underset{T \rightarrow \infty}{\lim \, \sup} \; \Pb(\hat{\Ac} = \Ac) \leq c < 1,
\eeqw
where $c$ is a constant depending on the true model.
\end{proposition}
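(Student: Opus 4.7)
The plan is to reduce the assertion to a statement about the limiting distribution produced by Theorem \ref{asym_dist_Knight} and then exploit the Karush--Kuhn--Tucker conditions of the limit problem together with the non-degeneracy of the Gaussian noise. Define the closed linear subspace $F := \{\uu \in \Rb^d : \uu_j = 0 \; \forall j \in \Ac^c\}$. Since $\theta_{0,j} = 0$ for $j \in \Ac^c$, the inclusion $\{\hat{\Ac} = \Ac\} \subset \{\sqrt{T}(\hat\theta - \theta_0) \in F\}$ holds, and Theorem \ref{asym_dist_Knight} together with the portmanteau theorem for closed sets yields
\beqw
\underset{T \to \infty}{\lim\sup}\; \Pb(\hat{\Ac}=\Ac) \;\leq\; \Pb(\uu^{*} \in F), \qquad \uu^{*} := \underset{\uu \in \Rb^d}{\arg\min}\, \Fb_{\infty}(\uu).
\eeqw
It therefore suffices to establish $\Pb(\uu^{*} \in F) < 1$.

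Next I would write the subdifferential optimality conditions for the strictly convex problem $\min_{\uu} \Fb_{\infty}(\uu)$, in exact analogy with the finite-sample system (\ref{opt1}). Fix an index $j^* = (k,i) \in \Ac^c$. If $k \in \Sc$ (partially active group), the $l^1/l^2$ term is smooth at $\uu^{*,(k)}$ because $\theta_0^{(k)} \neq \mathbf{0}$, while the $l^1$ term remains non-differentiable at zero; the $j^*$-th KKT equation reduces to
\beqw
\ZZ_{j^{*}} + (\Hb \uu^{*})_{j^{*}} + \lambda_0 \alpha_k w = 0, \qquad |w| \leq 1,
\eeqw
so $\uu^{*}_{j^{*}} = 0$ forces $|\ZZ_{j^{*}} + (\Hb \uu^{*})_{j^{*}}| \leq \lambda_0 \alpha_k$. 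If instead $k \notin \Sc$, the $l^1/l^2$ term is also non-smooth at the origin and the group-level subgradient condition becomes $\|\ZZ^{(k)} + (\Hb \uu^{*})^{(k)} + \lambda_0 \alpha_k \hat\ww^{(k)}\|_2 \leq \gamma_0 \xi_k$ for some $\hat\ww^{(k)}$ with $\|\hat\ww^{(k)}\|_\infty \leq 1$. In either case the event $\{\uu^{*} \in F\}$ is contained in an event of the form $\{ \, g(\ZZ) \in B \, \}$, where $g$ is a continuous (in fact Lipschitz) function of $\ZZ$ coming from the minimizer map $\ZZ \mapsto \uu^{*}(\ZZ)$ and $B$ is a bounded set in a Euclidean space whose size is controlled by the constants $\lambda_0,\gamma_0,\alpha,\xi,\Hb$.

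Finally I would show $\Pb(g(\ZZ) \in B) < 1$ using Assumption \ref{H5}, which guarantees that $\ZZ \sim \Nc(0,\Mb)$ has positive-definite covariance and hence full support in $\Rb^d$. By strict convexity and coercivity of the quadratic part $\tfrac{1}{2}\uu' \Hb \uu$, the minimizer satisfies $\uu^{*}(\ZZ) = -\Hb^{-1}\ZZ + O(1)$ uniformly in $\ZZ$, the $O(1)$ correction being bounded by the penalty strength; consequently $\ZZ_{j^{*}} + (\Hb \uu^{*})_{j^{*}}$ is a bounded function of $\ZZ$, while marginally $(\ZZ_{j^{*}}, \uu^{*})$ is a non-degenerate transformation of a full-support Gaussian. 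Hence the bounded constraint produced by the KKT analysis is violated on a set of strictly positive Gaussian measure, which produces the required bound $c = \Pb(\uu^{*} \in F) < 1$ depending only on $\theta_0,\lambda_0,\gamma_0,\alpha,\xi,\Hb,\Mb$. The main obstacle is the implicit dependence of $\uu^{*}$ on $\ZZ$ inside the subgradient inequality; the cleanest resolution is to exploit the block structure of $\Hb$ to project the KKT equation onto the coordinate $j^{*}$ and rewrite the constraint as a bounded interval for a linear combination of the coordinates of $\ZZ$, whose distribution is absolutely continuous on $\Rb$ and therefore assigns mass less than one to every compact set.
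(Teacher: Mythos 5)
Your proposal is correct and follows essentially the same route as the paper: invoke Theorem \ref{asym_dist_Knight}, apply the portmanteau inequality to the closed set $\{\uu : \uu_{\Ac^c}=0\}$, and then use the KKT/subgradient conditions of the limiting problem $\min_{\uu}\Fb_{\infty}(\uu)$ to show that $\{\uu^{*}_{\Ac^c}=0\}$ forces a non-degenerate Gaussian functional of $\ZZ$ into a bounded set, an event of probability strictly less than one. The paper merely makes the last step explicit by solving the active-block KKT equations for $\uu^{*}_{\Sc}$ (resp. $\uu^{*}_{\Ac_k}$) and substituting into the inactive-block inequalities, and it records separately the degenerate case $\lambda_0=\gamma_0=0$, where $\uu^{*}=-\Hb^{-1}\ZZ$ gives $c=0$.
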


\begin{proof}
In Theorem \ref{asym_dist_Knight}, we proved
\beqw
\sqrt{T}(\hat{\theta} - \theta_0) := \underset{\uu \in \Rb^d}{\arg \, \min} \{\Fb_T\} \overset{d}{\longrightarrow} \underset{\uu \in \Rb^d}{\arg \, \min} \{\Fb_{\infty}\},
\eeqw
under the assumption $\lambda_T/\sqrt{T} \rightarrow \lambda_0$ and $\gamma_T / \sqrt{T} \rightarrow \gamma_0$. The limit random function is
\beqw
\begin{array}{llll}
\Fb_{\infty}(\uu) & = & \cfrac{1}{2}\uu' \Hb \uu + \uu' \ZZ + \lambda_0 \overset{m}{\underset{k = 1}{\sum}} \alpha_k \overset{\cc_k}{\underset{i = 1}{\sum}} \{ |\uu^{(k)}_i| \mathbf{1}_{\theta^{(k)}_{0,i} = 0} + \uu^{(k)}_i \text{sgn}(\theta^{(k)}_{0,i})\mathbf{1}_{\theta^{(k)}_{0,i} \neq 0} \}  \\
& + & \gamma_0 \overset{m}{\underset{l = 1}{\sum}} \xi_l \{\|\uu^{(l)}\|_2 \mathbf{1}_{\theta^{(l)}_{0} = \mathbf{0}} + \cfrac{\uu^{(l)'} \theta^{(l)}_0}{\|\theta^{(l)}_0\|_2} \mathbf{1}_{\theta^{(l)}_{0} \neq \mathbf{0}}\}.
\end{array}
\eeqw
First, let us observe that
\beqw
\{\hat{\Ac} = \Ac \} = \{\forall k =1,\cdots,m, \, i \in \Ac^c_k, \hat{\theta}^{(k)}_i = 0\} \cap \{\forall k =1,\cdots,m, \, i \in \hat{\Ac}^c_k, \theta^{(k)}_{0,i} = 0\}.
\eeqw
Both sets describing $\{\hat{\Ac} = \Ac \}$ are symmetric, and thus we can focus on
\beqw
\{\hat{\Ac} = \Ac \} \Rightarrow \{\forall k =1,\cdots,m, \, i \in \Ac^c_k, T^{1/2} \hat{\theta}^{(k)}_i = 0\}.
\eeqw
Hence
\beqw
\Pb( \hat{\Ac} = \Ac ) \leq \Pb( \forall k = 1,\cdots,m, \forall i \in \Ac^c_k, T^{1/2}\hat{\theta}^{(k)}_i = 0).
\eeqw
Denoting by $ \uu^*:= \underset{\uu \in \Rb^d}{\arg \, \min} \{\Fb_{\infty}(\uu)\}$, Theorem \ref{asym_dist_Knight} corresponds to $\sqrt{T}(\hat{\theta}_{\Ac} - \theta_{0,\Ac}) \overset{d}{\longrightarrow} \uu^*_{\Ac}$. By the Portmanteau Theorem (see Wellner and van der Vaart, 1996), we have
\beqw
\underset{T \rightarrow \infty}{\lim \sup} \; \Pb( \forall k = 1,\cdots,m, \forall i \in \Ac^c_k, T^{1/2}\hat{\theta}^{(k)}_i = 0) \leq \Pb(\forall k =1,\cdots,m, \forall i \in \Ac^c_k, \uu^{(k) *}_i = 0),
\eeqw
as $\theta_{0,\Ac^c} = \mathbf{0}$. Consequently, we need to prove that the probability of the right hand side is strictly inferior to $1$, which is upper-bounded by
\beq \label{upper_bound_select}
\begin{array}{llll}
\Pb(\forall k =1,\cdots,m, \forall i \in \Ac^c_k, \uu^{(k) *}_i = 0) \leq && \\
\min(\Pb(k \notin \Sc, \uu^{(k) *} = 0),\Pb(k \in \Sc, \forall i \in \Ac^c_k, \uu^{(k) *}_i = 0)).
\end{array}
\eeq

\mds

If $\lambda_0 = \gamma_0 = 0$, then $\uu^* = -\Hb^{-1} \ZZ$, such that $\Pb_{\uu^*} = \Nc(0,\Hb^{-1} \Mb \Hb^{-1})$. Hence , $c = 0$.

\mds

If $\lambda_0 \neq 0$ or $\gamma_0 \neq 0$, the necessary and sufficient optimality conditions for a group $k$ tell us that $\uu^*$ satisfies
\beq \label{KKTopt}
\left\{\begin{array}{llll}
(\Hb \uu^* + \ZZ)_{(k)} + \lambda_0 \alpha_k \pp^{(k)} + \gamma_0 \xi_k \cfrac{\theta^{(k)}_0}{\|\theta^{(k)}_0\|_2} = 0, & & k \in \Sc,\\
(\Hb \uu^* + \ZZ)_{(k)} + \lambda_0 \alpha_k \ww^{(k)} + \gamma_0 \xi_k \zz^{(k)} = 0, & & \, \text{otherwise},
\end{array}\right.
\eeq
where $\ww^{(k)}$ and $\zz^{(k)}$ are the subgradients of $\|\uu^{(k)}\|_1$ and $\|\uu^{(k)}\|_2$ given by
\beqw
\ww^{(k)}_i = \begin{cases}
\text{sgn}(\uu^{(k)}_i) \, \text{if} \, \uu^{(k)}_i \neq 0,\\
\in \{\ww^{(k)}_i : |\ww^{(k)}_i| \leq 1\} \, \text{if} \, \uu^{(k)}_i = 0,
\end{cases}
\,
\zz^{(k)} = \begin{cases}
\cfrac{\uu^{(k)}}{\|\uu^{(k)}\|_2} \, \text{if} \, \uu^{(k)} \neq 0,\\
\in \{\zz^{(k)} : \|\zz^{(k)}\|_2 \leq 1\} \, \text{if} \, \uu^{(k)} = 0,
\end{cases}
\eeqw
and $\pp^{(k)}_i = \partial_{\uu_i} \{ |\uu^{(k)}_i| \mathbf{1}_{\theta^{(k)}_{0,i} = 0} + \uu^{(k)}_i \text{sgn}(\theta^{(k)}_{0,i})\mathbf{1}_{\theta^{(k)}_{0,i} \neq 0} \}$.

\mds

If $\uu^{(m) *} = 0, \forall m \notin \Sc$, then the optimality conditions (\ref{KKTopt}) become
\beq \label{inactivegroup}
\left\{\begin{array}{llll}
\Hb_{\Sc \Sc} \uu^*_{\Sc} + \ZZ_{\Sc} + \lambda_0 \tau_{\Sc} + \gamma_0 \zeta_{\Sc} = 0, & &\\
\|-\Hb_{(l) \Sc} \uu^*_{\Sc} - \ZZ_{(l)} -\lambda_0 \alpha_l \ww^{(l)}\|_2 \leq \gamma_0 \xi_l, \, \text{as} \, \|\zz^{(l)}\|_2 \leq 1, \, l \in \Sc^c, & &
\end{array}\right.
\eeq
with $\tau_{\Sc} = \text{vec}(k \in \Sc, \alpha_k \pp^{(k)})$ and $\zeta_{\Sc} = \text{vec}(k \in \Sc, \xi_k \cfrac{\theta^{(k)}_0}{\|\theta^{(k)}_0\|_2})$, which are vectors of $\Rb^{\text{card}(\Sc)}$.
\mds

For $k \in \Sc$, that is the vector $\theta^{(k)}_0$ is at least non-zero, then
\beq \label{KKToptwithin}
\left\{\begin{array}{llll}
(\Hb \uu^* + \ZZ)_i + \lambda_0 \alpha_k \text{sgn}(\theta^{(k)}_{0,i}) + \gamma_0 \xi_k \cfrac{\theta^{(k)}_{0,i}}{\|\theta^{(k)}_0\|_2} = 0, \, \text{if} \, k \in \Sc, i \in \Ac_k, & &\\
(\Hb \uu^* + \ZZ)_i + \lambda_0 \alpha_k  \ww^{(k)}_i  = 0, \, i \in \Ac^c_k. & &
\end{array}\right.
\eeq
Consequently, if $\uu^{(k) *}_i = 0, \forall i \in \Ac^c_k$, with $k \in \Sc$, then the conditions (\ref{KKToptwithin}) become
\beqw
\left\{\begin{array}{llll}
\Hb_{\Ac_k \Ac_k} \uu^*_{\Ac_k} + \ZZ_{\Ac_k} + \lambda_0 \alpha_k \text{sgn}(\theta_{0,\Ac_k}) + \gamma_0 \xi_k \cfrac{\theta_{0,\Ac_k}}{\|\theta_{0,\Ac_k}\|_2} = 0, & &\\
|-(\Hb_{\Ac^c_k \Ac_k} \uu^*_{\Ac_k} + \ZZ_{\Ac^c_k})_i| \leq \lambda_0 \alpha_k. & &
\end{array}\right.
\eeqw

\mds

Combining relationships in (\ref{inactivegroup}), we obtain
\beqw
\|\Hb_{(l) \Sc} \Hb^{-1}_{\Sc \Sc} (\ZZ_{\Sc} + \lambda_0 \tau_{\Sc} + \gamma_0 \zeta_{\Sc}) - \ZZ_{(l)} -\lambda_0 \alpha_l \ww^{(l)}\|_2 \leq \gamma_0 \xi_l, l \in \Sc^c.
\eeqw
The same reasoning applies for active groups with inactive components, such that combining relationships in (\ref{KKToptwithin}), we obtain
\beqw
|(\Hb_{\Ac^c_k \Ac_k} \Hb^{-1}_{\Ac_k \Ac_k} (\ZZ_{\Ac_k} + \lambda_0 \alpha_k \text{sgn}(\theta_{0,\Ac_k}) + \gamma_0 \xi_k \cfrac{\theta_{0,\Ac_k}}{\|\theta_{0,\Ac_k}\|_2}) - \ZZ_{\Ac^c_k})_i| \leq \lambda_0 \alpha_k.
\eeqw

\mds

Hence we deduce
\beqw
\begin{array}{llll}
\Pb(\forall k =1,\cdots,m, \forall \in \Ac^c_k, \uu^{(k) *}_i = 0) \leq && \\
\min(\Pb(k \notin \Sc, \uu^{(k) *} = 0),\Pb(k \in \Sc, \forall i \in \Ac^c_k, \uu^{(k) *}_i = 0)) := \min(a_1,a_2).
\end{array}
\eeqw
Under the assumption that $\lambda_0 < \infty$ and $\gamma_0 < \infty$, we obtain
\beqw
\begin{array}{llll}
a_1 = \Pb(l \in\Sc^c, \|\Hb_{(l) \Sc} \Hb^{-1}_{\Sc \Sc} (\ZZ_{\Sc} + \lambda_0 \tau_{\Sc} + \gamma_0 \zeta_{\Sc}) - \ZZ_{(l)} -\lambda_0 \alpha_l \ww^{(l)}\|_2 \leq \gamma_0 \xi_l) < 1, & & \\
a_2 = \Pb(k \in \Sc, i \in \Ac^c_k, |(\Hb_{\Ac^c_k \Ac_k} \Hb^{-1}_{\Ac_k \Ac_k} (\ZZ_{\Ac_k} + \lambda_0 \alpha_k \text{sgn}(\theta_{0,\Ac_k}) + \gamma_0 \xi_k \cfrac{\theta_{0,\Ac_k}}{\|\theta_{0,\Ac_k}\|_2}) - \ZZ_{\Ac^c_k})_i| \leq \lambda_0 \alpha_k) < 1.&&
\end{array}
\eeqw
Thus $c < 1$, which proves (\ref{upper_bound_select}), that is proposition \ref{proposition1}.
\end{proof}

\bre
The result in Proposition \ref{proposition1} highlights that the SGL as proposed by Simon and al. (2013) cannot achieve the oracle property since the penalties cannot recover the unknown set of active indices $\Ac$, which is called model selection consistency. To fix this drawback in an ordinary least square framework, Zou (2006) proposes the adaptive LASSO, where random weights are used to penalize different coefficients and proves that the adaptive LASSO estimator satisfies the oracle property in the sense of Fan and Li (2001), that is asymptotic normality and selection consistency for a proper choice of $\lambda_T$ and $\alpha^{(k)}_i$.  That is also the case for the adaptive group LASSO model proposed by Nardi and Rinaldo (2008), where adaptive weights are used to penalize grouped coefficients differently. We propose the same approach than Zou (2006) and use adaptive weights in the penalties such that the adaptive SGL enjoys the oracle property in the sense of Fan and Li (2001) as proved in Theorem \ref{oracle1}.
\ere

The adaptive specification of the proposed estimator now becomes
\beq \label{2ndcrit}
\hat{\theta} = \underset{\theta \in \Theta}{\arg \, \min} \, \{\Gb_T \psi(\theta)\},
\eeq
where
\beqw
\begin{array}{llll}
\theta \mapsto \Gb_T \psi(\theta) & = & \cfrac{1}{T} \overset{T}{\underset{t=1}{\sum}} l(\eps_t;\theta) + \pp_1(\lambda_T,\tilde{\theta},\theta) + \pp_2(\gamma_T,\tilde{\theta},\theta) \\
& = & \Gb_T l(\theta) + \pp_1(\lambda_T,\tilde{\theta},\theta) + \pp_2(\gamma_T,\tilde{\theta},\theta),
\end{array}
\eeqw
such that both penalties are specified as
\beqw
\pp_1(\lambda_T,\tilde{\theta},\theta) = \lambda_T T^{-1} \overset{m}{\underset{k=1}{\sum}} \overset{\cc_k}{\underset{i=1}{\sum}} \alpha(\tilde{\theta}^{(k)}_i) |\theta^{(k)}_i|, \pp_2(\gamma_T,\tilde{\theta},\theta) = \gamma_T T^{-1} \overset{m}{\underset{l=1}{\sum}} \xi(\tilde{\theta}^{(l)}) \|\theta^{(l)}\|_2.
\eeqw
These penalties are now randomized through the $\tilde{\theta}$ argument in the weights $\alpha$'s and $\xi$'s. This first step estimator $\tilde{\theta}$ is supposed to be a $T^{1/2}$-consistent estimator of $\theta_0$. For instance, it can be defined as an M-estimator of the unpenalized empirical criterion $\Gb_T l(.)$, that is
\beqw
\tilde{\theta} = \underset{\theta \in \Theta}{\arg \, \min} \, \Gb_T l(\theta).
\eeqw
Adaptive weights are also used by Zou and Zhang (2009), who plug the elastic-net estimator in the adaptive weight and then estimate a new elastic net model using these weights, that is the adaptive elastic net.

\mds

The weights we use are now random and for any group $k$ or $l$, $\alpha(\tilde{\theta}^{(k)}) \in \Rb^{\cc_k}_+$, $\xi(\tilde{\theta}^{(l}) \in \Rb_+$ are specified as
\beqw
\alpha^{(k)}_T := \alpha(\tilde{\theta}^{(k)}) = (|\tilde{\theta}^{(k)}_i|^{-\eta},i=1,\cdots,\cc_k), \, \xi_{T,l} := \xi(\tilde{\theta}^{(l}) = \|\tilde{\theta}^{(l)}\|^{-\mu}_2,
\eeqw
for some constants $\eta >0$ and $\mu > 0$ (to be specified).

\mds

\bre
Theorem \ref{bound_prob} can be adapted to the adaptive specification of the SGL in (\ref{2ndcrit}), where the bound in probability of the error would be
\beqw
\|\hat{\theta} - \theta_0 \| = O_p(T^{-1/2} + \lambda_T T^{-1} a_T + \gamma_T T^{-1} b_T),
\eeqw
with $a_T = \text{card}(\Ac) . \{\underset{k \in \Sc}{\max} \, (\underset{i \in \Ac_k}{\max} \, \alpha^{(k)}_{T,i})\}$, $b_T = \text{card}(\Ac) . \{\underset{l \in \Sc}{\max} \, \xi_{T,l}\}$, such that $\lambda_T T^{-1} a_T \overset{\Pb}{\rightarrow} 0$ and $\gamma_T T^{-1} b_T \overset{\Pb}{\rightarrow} 0$. The proof follows exactly the same steps as for Theorem (\ref{bound_prob}), except $a_T$ and $b_T$ are random quantities.
\ere

\begin{theorem} \label{oracle1}
Under assumptions \ref{H1}-\ref{third_or}, if $\lambda_T T^{-1/2} \rightarrow 0$, $\gamma_T T^{-1/2} \rightarrow 0$, $T^{(\eta-1)/2} \lambda_T \rightarrow \infty$ and $T^{(\mu-1)/2} \gamma_T \rightarrow \infty$,  then $\hat{\theta}$ obtained in (\ref{2ndcrit}) satisfies
\beqw
\begin{array}{rclcl}
\underset{T \rightarrow \infty}{\lim} \Pb(\hat{\Ac} = \Ac) & = & 1, \, \text{and} \\
\sqrt{T} (\hat{\theta}_{\Ac} - \theta_{0,\Ac})& \overset{d}{\longrightarrow} & \Nc(0,\Hb^{-1}_{\Ac \Ac} \Mb_{\Ac \Ac} \Hb^{-1}_{\Ac \Ac}).
\end{array}
\eeqw
\end{theorem}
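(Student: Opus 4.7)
The plan is to prove the result via the classical oracle-estimator argument (as in Zou, 2006; Fan and Peng, 2004) adapted to the grouped structure. I define $\hat{\theta}^o\in\Theta$ by $\hat{\theta}^o_{\Ac^c}=\mathbf{0}$ and $\hat{\theta}^o_{\Ac}$ the minimizer over $\Ac$-supported vectors of the penalized criterion $\Gb_T\psi$. I will (i) establish $\sqrt{T}$-consistency and Gaussian asymptotics for $\hat{\theta}^o_{\Ac}$, (ii) verify that $\hat{\theta}^o$ satisfies the global KKT subgradient system of Section 3 with probability tending to one, and (iii) invoke strict convexity of $\Gb_T\psi$ (Assumption \ref{H3}) to conclude that the unique global minimizer $\hat{\theta}$ equals $\hat{\theta}^o$ on the same event, which yields both conclusions simultaneously.

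On the active set the random weights $\alpha^{(k)}_{T,i}=|\tilde{\theta}^{(k)}_i|^{-\eta}$ and $\xi_{T,k}=\|\tilde{\theta}^{(k)}\|_2^{-\mu}$, for $k\in\Sc$ and $i\in\Ac_k$, converge in probability to the finite positive constants $|\theta^{(k)}_{0,i}|^{-\eta}$ and $\|\theta^{(k)}_0\|_2^{-\mu}$ by $\sqrt{T}$-consistency of $\tilde{\theta}$ and the continuous mapping theorem. Under $\lambda_T T^{-1/2}\to 0$ and $\gamma_T T^{-1/2}\to 0$ the $\Ac$-restricted penalty contribution is $O_p(\lambda_T/T)+O_p(\gamma_T/T)=o_p(T^{-1/2})$. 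Rerunning the argument of Theorem \ref{bound_prob} restricted to the subspace $\{\theta:\theta_{\Ac^c}=\mathbf{0}\}$ gives $\|\hat{\theta}^o_{\Ac}-\theta_{0,\Ac}\|=O_p(T^{-1/2})$. Taylor-expanding the restricted score equation around $\theta_{0,\Ac}$ and using $\ddot{\Gb}_T l(\theta_0)_{\Ac\Ac}\overset{\Pb}{\to}\Hb_{\Ac\Ac}$ (ergodic theorem) together with $\sqrt{T}\,\dot{\Gb}_T l(\theta_0)_{\Ac}\overset{d}{\to}\Nc(0,\Mb_{\Ac\Ac})$ (Billingsley CLT under Assumption \ref{H4}), the linearization
\beqw
\sqrt{T}(\hat{\theta}^o_{\Ac}-\theta_{0,\Ac})=-\Hb_{\Ac\Ac}^{-1}\sqrt{T}\,\dot{\Gb}_T l(\theta_0)_{\Ac}+o_p(1)
\eeqw
delivers the announced $\Nc(0,\Hb_{\Ac\Ac}^{-1}\Mb_{\Ac\Ac}\Hb_{\Ac\Ac}^{-1})$ limit.

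The decisive step is the KKT verification for $\hat{\theta}^o$, where the rate conditions $T^{(\eta-1)/2}\lambda_T\to\infty$ and $T^{(\mu-1)/2}\gamma_T\to\infty$ come into play. For an inactive group $l\in\Sc^c$, choosing the free subgradient $\hat{\ww}^{(l)}=\mathbf{0}$, the subgradient condition reduces to $\|\dot{\Gb}_T l(\hat{\theta}^o)_{(l)}\|_2\leq \gamma_T T^{-1}\xi_{T,l}$; a Taylor expansion and the $\sqrt{T}$-consistency on $\Ac$ bound the left-hand side by $O_p(T^{-1/2})$, while $\tilde{\theta}^{(l)}=O_p(T^{-1/2})$ forces $\xi_{T,l}\geq c\,T^{\mu/2}$ with probability approaching one, so the right-hand side is at least $c\,\gamma_T T^{(\mu-2)/2}$, and $T^{(\mu-1)/2}\gamma_T\to\infty$ closes the gap. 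For an inactive coordinate $i\in\Ac^c_k$ within an active group $k\in\Sc$, the $\ell^2$-subgradient coordinate is exactly $\hat{\theta}^{o,(k)}_i/\|\hat{\theta}^{o,(k)}\|_2=0$ since $\hat{\theta}^{o,(k)}_i=0$, so the condition collapses to $|\dot{\Gb}_T l(\hat{\theta}^o)_{(k),i}|\leq \lambda_T T^{-1}\alpha^{(k)}_{T,i}$, and the same left-hand side bound combined with $\alpha^{(k)}_{T,i}\geq c\,T^{\eta/2}$ and $T^{(\eta-1)/2}\lambda_T\to\infty$ finishes the verification. The main obstacle is precisely this within-active-group sparsity step: one has to exploit the structural vanishing of the $\ell^2$-subgradient at a zero coordinate of a nonzero group, so that the diverging $\ell^1$ weight is not neutralized by the bounded $\ell^2$ weight attached to the same group. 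Once the KKT conditions hold for $\hat{\theta}^o$ with probability approaching one, strict convexity gives $\hat{\theta}=\hat{\theta}^o$ on that event, whence $\Pb(\hat{\Ac}=\Ac)\to 1$ and the stated asymptotic normality follow at once.
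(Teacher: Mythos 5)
Your proof is correct, but it follows a genuinely different route from the paper's. The paper establishes the asymptotic distribution through the convexity (argmin) argument of Lemma \ref{convex_lemma}: it defines $\Fb_T(\uu)=T\,\Gb_T(\psi(\theta_0+\uu/\sqrt{T})-\psi(\theta_0))$, shows that the rescaled adaptive penalties vanish on the active coordinates and diverge on the inactive ones under the same four rate conditions, so that the limit $\Fb_{\infty}$ is the quadratic $\frac{1}{2}\uu_{\Ac}'\Hb_{\Ac\Ac}\uu_{\Ac}+\uu_{\Ac}'\ZZ_{\Ac}$ on the active subspace and $+\infty$ off it, and then reads off the normal limit from the argmin convergence; selection consistency is proved separately by showing the KKT equalities at $\hat{\theta}$ cannot hold for inactive groups or coordinates because a diverging weighted penalty term would have to be balanced by an asymptotically normal score. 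You instead run a primal--dual witness construction: build the $\Ac$-restricted oracle minimizer $\hat{\theta}^o$, linearize its score equation to get the Gaussian limit, verify the full subgradient system at $\hat{\theta}^o$ with probability tending to one, and use strict convexity to identify $\hat{\theta}=\hat{\theta}^o$. Your route buys exact sparsity and normality simultaneously from the single event $\{\hat{\theta}=\hat{\theta}^o\}$ (note that the paper's conclusion $\sqrt{T}\hat{\theta}_{\Ac^c}\overset{d}{\to}\mathbf{0}$ from the argmin step does not by itself give $\Pb(\hat{\theta}_{\Ac^c}=\mathbf{0})\to 1$, which is why the paper needs its separate KKT argument anyway), at the cost of constructing and analyzing the auxiliary restricted estimator. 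Both arguments use the rate conditions in exactly the same places, and you correctly isolate the one structural subtlety — the $\ell^2$ subgradient of a nonzero group vanishes at a zero coordinate, so the bounded group weight cannot neutralize the diverging $\ell^1$ weight for within-group screening — which is precisely the content of the second display in the paper's treatment of $k\in\Sc$, $i\in\Ac^c_k$.
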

\begin{proof}
We start with the asymptotic distribution and proceed as in the proof of Theorem \ref{asym_dist_Knight}, where we used Lemma \ref{convex_lemma}. To do so, we prove the finite dimensional convergence in distribution of the empirical criterion $\Fb_T(\uu)$ to $\Fb_{\infty}(\uu)$ with $\uu \in \Rb^d$, where these quantities are respectively  defined as
\beqw
\begin{array}{llll}
\Fb_T(\uu) & = & T \Gb_T (\psi(\theta_0 + \uu/\sqrt{T}) - \psi(\theta_0)) \nonumber \\
& = & T \Gb_T (l(\theta_0 + \uu/\sqrt{T}) - l(\theta_0)) + \lambda_T \overset{m}{\underset{k=1}{\sum}} \overset{\cc_k}{\underset{i=1}{\sum}} \alpha^{(k)}_{T,i} [|\theta^{(k)}_{0,i} + \uu^{(k)}_i/\sqrt{T}| - |\theta^{(k)}_{0,i}|] \nonumber \\
& + & \gamma_T \overset{m}{\underset{l=1}{\sum}} \xi_{T,l} [\|\theta^{(l)}_0 + \uu^{(l)}/\sqrt{T}\|_2 - \|\theta^{(l)}_0\|_2],
\end{array}
\eeqw
and
\beq \label{distr_oracle}
\Fb_{\infty}(\uu) = \begin{cases}
\cfrac{1}{2} \uu'_{\Ac} \Hb_{\Ac \Ac} \uu_{\Ac} + \uu_{\Ac}' \ZZ_{\Ac} & \text{if} \; \uu_i = 0, \; \text{when} \;  i \notin \Ac, \, \text{and} \\
\infty & \text{otherwise},
\end{cases}
\eeq
with $\ZZ_{\Ac} \sim \Nc(0,\Mb_{\Ac \Ac})$. By Lemma \ref{convex_lemma}, the finite dimensional convergence in distribution implies $\underset{\uu \in \Rb^d}{\arg \, \min}\{\Fb_T(\uu)\} \overset{d}{\longrightarrow} \underset{\uu \in \Rb^d}{\arg \, \min}\{\Fb_{\infty}(\uu)\}$. We first consider the unpenalized empirical criterion of $\Fb_T(.)$, which can be expanded as
\beqw
T \Gb_T (\psi(\theta_0 + \uu/\sqrt{T}) - \psi(\theta_0)) = T^{1/2}\dot{\Gb}_T l(\theta_0) \uu + \cfrac{1}{2} \uu' \ddot{\Gb}_T l(\bar{\theta}) \uu + \cfrac{1}{6 T^{1/3}} \nabla'\{\uu' \ddot{\Gb}_T l(\bar{\theta})\}\uu,
\eeqw
where $\bar{\theta}$ lies between $\theta_0$ and $\theta_0 + \uu/\sqrt{T}$. First, using the same reasoning on the third order term, we obtain
\beqw
\cfrac{1}{6 T^{1/3}} \nabla'\{\uu' \ddot{\Gb}_T l(\bar{\theta})\}\uu \overset{\Pb}{\underset{T \rightarrow \infty}{\longrightarrow}} 0.
\eeqw
By the ergodic theorem, we deduce $\ddot{\Gb}_T l(\theta_0) \overset{\Pb}{\underset{T \rightarrow \infty}{\longrightarrow}} \Hb$ and by assumption \ref{H4}, $\sqrt{T}\dot{\Gb}_T l(\theta_0) \overset{d}{\longrightarrow} \Nc(0,\Mb)$.

\mds

We now focus on the penalty terms of (\ref{2ndcrit}), we remind that $\alpha^{(k)}_{T,i} = |\tilde{\theta}^{(k)}_i|^{-\eta}$, such that for $i \in \Ac_k, k \in \Sc$, $\tilde{\theta}^{(k)}_i \overset{\Pb}{\rightarrow} \theta^{(k)}_{0,i} \neq 0$. Note that
\beqw
\sqrt{T}(|\theta^{(k)}_0 + \uu^{(k)}/\sqrt{T}| - |\theta^{(k)}_0|] \overset{\Pb}{\underset{T \rightarrow \infty}{\longrightarrow}} \uu^{(k)}_i \text{sgn}(\theta^{(k)}_{0,i})\mathbf{1}_{\theta^{(k)}_{0,i} \neq 0}.
\eeqw
This implies that, for $i \in \Ac_k$, $k \in \Sc$, we have
\beqw
\lambda_T T^{-1/2}\overset{\cc_k}{\underset{i=1}{\sum}} \alpha^{(k)}_{T,i} \sqrt{T}(|\theta^{(k)}_{0,i} + \uu^{(k)}_i/\sqrt{T}| - |\theta^{(k)}_{0,i}|) \overset{\Pb}{\underset{T \rightarrow \infty}{\longrightarrow}} 0,
\eeqw
under the condition $\lambda_T T^{-1/2} \rightarrow 0$. For $i \in \Ac^c_k$, $\theta^{(k)}_{0,i} = 0$, then $T^{\eta/2} (|\tilde{\theta}^{(k)}_i|)^{\eta} = O_p(1)$. Hence under the assumption $\lambda_T T^{(\eta-1)/2} \rightarrow \infty$, we obtain
\beq \label{asymp_div}
\lambda_T T^{-1/2} \alpha^{(k)}_{T,i} \sqrt{T}(|\theta^{(k)}_{0,i} + \uu^{(k)}_i/\sqrt{T} | - |\theta^{(k)}_{0,i}|) = \lambda_T T^{-1/2} |\uu^{(k)}_i| \cfrac{T^{\eta/2}}{(T^{1/2}|\tilde{\theta}^{(k)}_i|)^{\eta}} \overset{\Pb}{\underset{T \rightarrow \infty}{\longrightarrow}} \infty.
\eeq

\mds

As for the $l^1/l^2$ quantity, we remind that $\xi_{T,l} = \|\tilde{\theta}^{(l)}\|^{-\mu}_2$, such that for $l \in \Sc$, $\tilde{\theta}^{(l)} \overset{\Pb}{\rightarrow} \theta^{(l)}_0$, and in this case
\beqw
\sqrt{T} \{ \|\theta^{(l)}_0 + \uu^{(l)}/\sqrt{T}\|_2 - \|\theta^{(l)}_0\|_2\} = \cfrac{\uu^{(l) '} \theta^{(l)}_0}{\|\theta^{(l)}_0\|_2} + o(T^{-1/2}).
\eeqw
Consequently, using $\gamma_T T^{-1/2} \rightarrow 0$, and for $l \in \Sc$, we obtain
\beqw
\gamma_T T^{-1/2} \sqrt{T} \xi_{T,l} ( \|\theta^{(l)}_0 + \uu^{(l)}/\sqrt{T}\|_2 - \|\theta^{(l)}_0\|_2 ) \overset{\Pb}{\underset{T \rightarrow \infty}{\longrightarrow}} 0.
\eeqw
Combining the fact $k \in \Sc$ and $\theta^{(k)}_0$ is partially zero, that is $i \in \Ac^c_k$, we obtain the divergence given in (\ref{asymp_div}). Furthermore, if $l \notin \Sc$, that is $\theta^{(l)}_0 = 0$, then
\beqw
\sqrt{T} \{ \|\theta^{(l)}_0 + \uu^{(l)}/\sqrt{T}\|_2 - \|\theta^{(l)}_0\|_2\} = \|\uu^{(l)}\|_2,
\eeqw
and $T^{\mu/2} (\|\tilde{\theta}^{(l)}\|_2)^{\mu} = O_p(1)$, then under the assumption $\gamma_T T^{(\mu-1)/2} \rightarrow \infty$, we obtain
\beqw
\gamma_T T^{-1/2} \xi_{T,l} \sqrt{T}[\|\theta^{(l)}_0 + \uu^{(l)}/\sqrt{T}\|_2 - \|\theta^{(l)}_0\|_2 ] =  \gamma_T T^{-1/2} \|\uu^{(l)}\|_2 \cfrac{T^{\mu/2}}{(T^{1/2}\|\tilde{\theta}^{(l)}\|_2)^{\mu}} \overset{\Pb}{\underset{T \rightarrow \infty}{\longrightarrow}} \infty.
\eeqw
We deduce the pointwise convergence $\Fb_T (\uu) \overset{d}{\longrightarrow} \Fb_{\infty}(\uu)$, where $\Fb_{\infty}(.)$ is given in (\ref{distr_oracle}). As $\Fb_T(.)$ is convex and $\Fb_{\infty}(.)$ is convex and has a unique minimum $(\Hb^{-1}_{\Ac \Ac} \ZZ_{\Ac},\mathbf{0}_{\Ac^c})$, by Lemma \ref{convex_lemma}, we obtain
\beqw
\sqrt{T}(\hat{\theta} - \theta_0) = \underset{\uu \in \Rb^d}{\arg \, \min}\{\Fb_T(\uu)\} \overset{d}{\longrightarrow} \underset{\uu \in \Rb^d}{\arg \, \min}\{\Fb_{\infty}(\uu)\},
\eeqw
that is to say
\beqw
\sqrt{T}(\hat{\theta}_{\Ac} - \theta_{0,\Ac}) \overset{d}{\longrightarrow} \Hb^{-1}_{\Ac \Ac} \ZZ_{\Ac}, \; \text{and} \; \sqrt{T}(\hat{\theta}_{\Ac^c} - \theta_{0,\Ac^c}) \overset{d}{\longrightarrow} \bf{0}_{\Ac^c}.
\eeqw

\mds

We now prove the model selection consistency. Let $i \in \Ac_k$, then by the asymptotic normality result, $\hat{\theta}^{(k)}_i \overset{\Pb}{\underset{T \rightarrow \infty}{\longrightarrow}} \theta^{(k)}_0$, which implies $\Pb(i \in \hat{\Ac}_k) \rightarrow 1$. Thus the proof consists of proving
\beqw
\forall k = 1,\cdots,m, \forall i \in \Ac^c_k, \Pb(i \in \hat{\Ac}_k) \rightarrow 0.
\eeqw
This problem can be split into two parts as
\beq \label{selection_split}
\forall k \notin \Sc, \Pb(k \in \hat{\Sc}) \rightarrow 0, \; \text{and} \; \forall k \in \Sc, \forall i \in \Ac^c_k, \Pb(i \in \hat{A}_k) \rightarrow 0.
\eeq
Let us start with the case $k \notin \Sc$. If $k \in \hat{\Sc}$, by the optimality conditions given by the Karush-Kuhn-Tucker theorem applied on $\Gb_T \psi(\hat{\theta})$, we have
\beqw
\dot{\Gb}_T l(\hat{\theta})_{(k)} + \cfrac{\lambda_T}{T} \alpha^{(k)}_T \odot \hat{\ww}^{(k)}+ \cfrac{\gamma_T}{T} \xi_{T,k} \cfrac{\hat{\theta}^{(k)}}{\|\hat{\theta}^{(k)}\|_2} = 0,
\eeqw
$\odot$ is the Hadamard product and
\beqw
\hat{\ww}^{(k)}_i = \begin{cases}
\text{sgn}(\hat{\theta}^{(k)}_i) \; \text{if} \; \hat{\theta}^{(k)}_i \neq 0,\\
\in \{\hat{\ww}^{(k)}_i : |\hat{\ww}^{(k)}_i| \leq 1\} \; \text{if} \; \hat{\theta}^{(k)}_i = 0.
\end{cases}
\eeqw
Multiplying the unpenalized part by $T^{1/2}$, we have the expansion
\beqw
\begin{array}{llll}
T^{1/2} \dot{\Gb}_T l(\hat{\theta})_{(k)} & = & T^{1/2} \dot{\Gb}_T l(\theta_0)_{(k)} + T^{1/2} \ddot{\Gb}_T l(\theta_0)_{(k) (k)} (\hat{\theta}-\theta_0)_{(k)} \\
& + & T^{1/2} \nabla'\{(\hat{\theta}-\theta_0)'_{(k)} \ddot{\Gb}_T l(\bar{\theta})_{(k)(k)} (\hat{\theta}-\theta_0)_{(k)}\} ,
\end{array}
\eeqw
which is asymptotically normal by consistency, assumption \ref{third_or} regarding the bound on the third order term, the Slutsky theorem and the central limit theorem of Billingsley (1961). Furthermore, we have
\beqw
\gamma_T T^{-1/2}\xi_{T,k} \cfrac{\hat{\theta}^{(k)}}{\|\hat{\theta}^{(k)}\|_2} = \gamma_T T^{(\mu-1)/2} (T^{1/2} \|\tilde{\theta}^{(k)}\|_2 )^{-\mu}\cfrac{\hat{\theta}^{(k)}}{\|\hat{\theta}^{(k)}\|_2} \overset{\Pb}{\underset{T \rightarrow \infty}{\longrightarrow}} \infty.
\eeqw
We obtain the same when adding $\lambda_T T^{-1/2} \alpha^{(k)}_T \odot \hat{\ww}^{(k)}$. Therefore, we have
\beqw
\forall k \notin \Sc, \Pb(k \in \hat{\Sc}) \leq \Pb(-\dot{\Gb}_T l(\hat{\theta})_{(k)} = \cfrac{\lambda_T}{T} \alpha^{(k)}_T \odot \hat{\ww}^{(k)}_i + \cfrac{\gamma_T}{T} \xi_{T,k} \cfrac{\hat{\theta}^{(k)}}{\|\hat{\theta}^{(k)}\|_2}) \rightarrow 0.
\eeqw
\mds

We now pick $k \in \Sc$ and consider the event $\{i \in \hat{\Ac}_k\}$. Then the Karush-Kuhn-Tucker conditions for $\Gb_T \psi(\hat{\theta})$ are given by
\beqw
(\dot{\Gb}_T l(\hat{\theta}))_{(k),i} + \cfrac{\lambda_T}{T} \alpha^{(k)}_{T,i} \text{sgn}(\hat{\theta}^{(k)}_{T,i}) + \cfrac{\gamma_T}{T} \xi_{T,k} \cfrac{\hat{\theta}^{(k)}_i}{\|\hat{\theta}^{(k)}\|_2} = 0.
\eeqw
Using the same reasoning as previously, $T^{1/2}(\dot{\Gb}_T l(\hat{\theta}))_{(k),i}$ is also asymptotically normal, and $\tilde{\theta}^{(k)} \overset{\Pb}{\underset{T \rightarrow \infty}{\longrightarrow}} \theta^{(k)}_0$ for $k \in \Sc$, and besides
\beqw
\lambda_T T^{-1/2}\alpha^{(k)}_{T,i} \text{sgn}(\hat{\theta}^{(k)}_i) = \lambda_T \cfrac{T^{(\eta-1)/2}}{(T^{1/2}|\tilde{\theta}^{(k)}_i|)^{\eta}} \overset{\Pb}{\underset{T \rightarrow \infty}{\longrightarrow}} \infty,
\eeqw
such that we obtain the same when adding $\gamma_T T^{-1/2}\xi_{T,k} \cfrac{\hat{\theta}^{(k)}_i}{\|\hat{\theta}^{(k)}\|_2}$. Therefore, we have
\beqw
\forall k \in \Sc, \forall i \notin \Ac_k, \Pb(i \in \hat{\Ac}_k) \leq \Pb(-(\dot{\Gb}_T l(\hat{\theta}))_{(k),i} = \cfrac{\lambda_T}{T} \alpha^{(k)}_{T,i} \text{sgn}(\hat{\theta}^{(k)}_i) + \cfrac{\gamma_T}{T} \xi_{T,k} \cfrac{\hat{\theta}^{(k)}_i}{\|\hat{\theta}^{(k)}\|_2}) \rightarrow 0.
\eeqw
We have proved (\ref{selection_split}).
\end{proof}

\section{Double asymptotic}

In the previous sections, we worked with a fixed dimension $d$, where $d = \overset{m}{\underset{k=1}{\sum}} \cc_k$. From now on, let us consider the case where $d = d_T$, such that $d_T \rightarrow \infty$ as $T \rightarrow \infty$. Note that $\text{card}(\Sc) = O(\text{card}(\Ac)) = O(d_T)$. The speed of growth of the dimension is supposed to be $d_T = O(T^c)$ for some $q_2<c<q_1$. In this section, we prove that the adaptive SGL enjoys the oracle property, that is model selection consistency and optimal rate of convergence for proper choices of $0 \leq q_1<q_2<1$. We highlight that our general framework unfortunatly hampers a high degree of flexibility on the behavior of $d_T$, that is $c$ cannot be set in $(0,1)$. This issue was encountered by Fan and Peng (2004) in an i.i.d. and non-adaptive framework. This lack of flexibility is a necessary cost to cope with the random remainder of the Taylor expansions as we should take the third order term into account. This problem is moved aside when considering the simple linear model, where the third order derivative is zero. For instance, Zou and Zhang (2009) proved the oracle property of the adaptive elastic-net in a double-asymptotic framework for linear models where $0 \leq c < 1$.

\mds

For the asymptotic normality, we use the method of Fan and Peng (2004) and Zou and Zhang (2009), where we derive the asymptotic distribution of the discrepancy $\sqrt{T}(\hat{\theta}-\theta_0)_{\Ac}$ times a matrix sequence $(Q_T)$ of size $r \times \text{card}(\Ac)$, $r$ being arbitrary but finite. This allows for switching from infinite dimensional distribution to finite dimensional distribution, where we can apply usual tools of asymptotic analysis.

\mds

In this section, we provide the conditions to achieve the oracle property as in Fan and Peng (2004) or Zou and Zhang (2009).  In this double asymptotic framework, the quantities depend on $d_T$, hence on $T$. They should be indexed by $T$, which expresses that the dimension depend on the sample size. In the rest of the paper, we denote $\Hb_T := \Eb[\nabla^2_{\theta \theta} l(\eps_t;\theta_0)]$ and $\Mb_T := \Eb[\nabla_{\theta} l(\eps_t;\theta_0) \nabla_{\theta'} l(\eps_t;\theta_0)]$. To make the reading easier, we do not index other quantities by $T$, which will be implicit. We remind that the criterion is
\beq \label{DA_SGL}
\begin{array}{llll}
\hat{\theta} & = & \underset{\theta \in \Theta}{\arg \, \min} \{ \Gb_T l(\theta) + \pp_1(\lambda_T,\tilde{\theta},\theta) + \pp_2(\gamma_T,\tilde{\theta},\theta)\} \\
& = & \underset{\theta \in \Theta}{\arg \, \min} \{\cfrac{1}{T} \overset{T}{\underset{t=1}{\sum}} l(\eps_t;\theta) + \cfrac{\lambda_T}{T} \overset{m}{\underset{k=1}{\sum}} \overset{\cc_k}{\underset{i=1}{\sum}} \alpha^{(k)}_{T,i} |\theta^{(k)}_i|+\cfrac{\gamma_T}{T}\overset{m}{\underset{l=1}{\sum}} \xi_{T,l} \|\theta^{(l)}\|_2\},
\end{array}
\eeq
with $\alpha^{(k)}_{T,i} = |\tilde{\theta}^{(k)}_i|^{-\eta}$ and $\xi_{T,l} = \|\tilde{\theta}^{(l)}\|^{-\mu}_2$, where $\eta > 0, \mu > 0$, and $\tilde{\theta}$ is a first step estimator satisfying
\beqw
\tilde{\theta} = \underset{\theta \in \Theta}{\arg \, \min} \{ \Gb_T l(\theta) \}.
\eeqw

\mds

The double asymptotic framework implies that the empirical criterion can be viewed as a sequence of dependent random variables for which we need refined asymptotic theorems for dependent sequence of arrays. Shiryaev (1991) proposes a version of the central limit theorem for dependent sequence of arrays, provided this sequence is a square integrable martingale difference satisfying the so-called Lindeberg condition. A similar theorem can be found in Billingsley (1995, theorem 35.12, p.476). We provide here the theorem of Shiryaev (see Theorem 4, p.543 of Shiryaev, 1991) that we will use to derive the asymptotic distribution of the adaptive SGL estimator.
\begin{theorem} \label{Shishi} \emph{(Shiryaev, 1991)} \\
Let a sequence of square-integrable martingale differences $\xi^T = (\xi_{T,t},\Fc^T_t),T \geq 0$, with $\Fc^T_t = \sigma(\xi_{T,s},s \leq t)$, satisfy the Lindeberg condition, for $\eps > 0$, given by
\beqw
\overset{T}{\underset{t=0}{\sum}} \Eb[\xi^2_{T,t} \mathbf{1}_{|\xi_{T,t}| > \eps} | \Fc^T_{t-1} ] \overset{\Pb}{\underset{T \rightarrow \infty}{\longrightarrow}} 0,
\eeqw
then if $\overset{T}{\underset{t=0}{\sum}} \Eb[\xi^2_{T,t}| \Fc^T_{t-1} ] \overset{\Pb}{\underset{T \rightarrow \infty}{\longrightarrow}} \sigma^2_t$, or $\overset{T}{\underset{t=0}{\sum}} \xi^2_{T,t} \overset{\Pb}{\underset{T \rightarrow \infty}{\longrightarrow}} \sigma^2_t$, then $\overset{T}{\underset{t=0}{\sum}} \xi_{T,t} \overset{d}{\longrightarrow} \Nc(0,\sigma^2_t).$
\end{theorem}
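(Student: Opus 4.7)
The plan is to reduce the problem to Lévy's continuity theorem, i.e., to show that the characteristic function $\phi_T(u) = \Eb[\exp(iu S_T)]$ with $S_T = \sum_{t=0}^T \xi_{T,t}$ converges pointwise to $\exp(-u^2 \sigma^2_t/2)$ for every $u \in \Rb$. Once this is established, convergence in distribution to $\Nc(0,\sigma^2_t)$ follows immediately. The backbone of the proof is therefore a careful handling of $\phi_T(u)$ exploiting the martingale difference structure encoded in $\Fc^T_t$.

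First I would write $\exp(iuS_T) = \prod_{t=0}^T \exp(iu \xi_{T,t})$ and use the third-order Taylor expansion $\exp(iux) = 1 + iux - \tfrac{u^2 x^2}{2} + R(u,x)$ with the sharp bound $|R(u,x)| \leq \min(|ux|^3/6,\,u^2 x^2)$. Taking conditional expectation with respect to $\Fc^T_{t-1}$ and iterating, the linear term drops out thanks to the martingale difference property $\Eb[\xi_{T,t} | \Fc^T_{t-1}] = 0$. What remains, schematically, is
\beqw
\phi_T(u) = \Eb\Bigl[\prod_{t=0}^T \Bigl(1 - \tfrac{u^2}{2} \Eb[\xi^2_{T,t}|\Fc^T_{t-1}] + \rho_{T,t}(u)\Bigr)\Bigr],
\eeqw
where $\rho_{T,t}(u)$ collects the Taylor remainders. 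The next step is to control $\sum_t \rho_{T,t}(u)$ in probability. Splitting $|\xi_{T,t}| \leq \eps$ versus $|\xi_{T,t}| > \eps$ and applying the two bounds in $\min(|ux|^3/6, u^2 x^2)$, the small-jumps part gives a term of order $\eps u^3 \sum_t \Eb[\xi^2_{T,t}|\Fc^T_{t-1}]$ and the large-jumps part is controlled by $u^2 \sum_t \Eb[\xi^2_{T,t}\mathbf{1}_{|\xi_{T,t}|>\eps}|\Fc^T_{t-1}]$. Letting $T\to\infty$ then $\eps\to 0$, the Lindeberg hypothesis drives the remainder to zero in probability.

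At this point the product telescopes, and using the standard inequality $|\prod(1+a_t) - \exp(\sum a_t)| \to 0$ when $\sum a_t$ is bounded and $\max|a_t|\to 0$ (which follows from Lindeberg applied with varying $\eps$), I would conclude $\phi_T(u) \to \exp(-u^2 \sigma^2_t / 2)$ provided the predictable quadratic variation $\sum_t \Eb[\xi^2_{T,t}|\Fc^T_{t-1}]$ converges to $\sigma^2_t$ in probability. To cover the alternative hypothesis where only $\sum_t \xi^2_{T,t} \overset{\Pb}{\to} \sigma^2_t$ is assumed, I would invoke a maximal inequality (Doob/Lenglart) combined with Lindeberg to show that the predictable and optional quadratic variations have the same limit in probability: specifically, $\sum_t (\xi^2_{T,t} - \Eb[\xi^2_{T,t}|\Fc^T_{t-1}])$ is itself a martingale whose bracket is negligible under Lindeberg.

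The main obstacle I anticipate is the uniform control of the Taylor remainder together with the telescoping-to-exponential step under merely \emph{in probability} convergence of the quadratic variation (rather than almost sure). This forces the argument to be carried out via subsequence extraction or by working directly with bounded continuous functionals of $S_T$ instead of characteristic functions; a clean way around this is to condition on a stopping-time truncation of the variances at level $\sigma^2_t + \delta$, proving the CLT on the truncated process and then passing to the limit using tightness. The Lindeberg condition does the heavy lifting both for the remainder control and for ensuring that this truncation is asymptotically inoperative.
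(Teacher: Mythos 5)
The paper does not actually prove this statement: Theorem \ref{Shishi} is imported verbatim as a known result, with an explicit pointer to Theorem 4, p.~543 of Shiryaev (1991), and is then used as a black box in the proof of Theorem \ref{oracle_double}. So there is no in-paper proof to compare yours against; the relevant benchmark is Shiryaev's own argument, which is precisely the characteristic-function route you outline (conditional Taylor expansion of $e^{iu\xi_{T,t}}$, cancellation of the linear term by the martingale property, Lindeberg control of the remainder, and a truncation/stopping-time device to handle convergence in probability of the quadratic characteristic). Your plan is therefore the standard and correct one in outline.

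Two points in your sketch are weaker than they should be. First, the displayed formula $\phi_T(u) = \Eb[\prod_t(1 - \tfrac{u^2}{2}\Eb[\xi^2_{T,t}|\Fc^T_{t-1}] + \rho_{T,t}(u))]$ is not an identity: conditional expectations do not factor through the product in this way, since each factor $\exp(iu\xi_{T,t})$ is not $\Fc^T_{t-1}$-measurable and the later factors are not independent of the earlier ones. The rigorous version replaces this by a telescoping bound on $|\Eb[e^{iuS_T}] - e^{-u^2\sigma^2/2}|$, or equivalently works with the normalized quantity $\Eb[e^{iuS_T}\,e^{\frac{u^2}{2}\sum_t \Eb[\xi^2_{T,t}|\Fc^T_{t-1}]}]$ and shows it tends to $1$; this is the actual heart of the proof and your sketch treats it as notation rather than as the step to be justified. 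Second, the assertion that the predictable and optional quadratic variations have the same limit in probability under Lindeberg is a genuine lemma (the bracket of the martingale $\sum_t(\xi^2_{T,t}-\Eb[\xi^2_{T,t}|\Fc^T_{t-1}])$ must be shown negligible, which again requires a truncation at level $\eps$ plus the Lindeberg sum); you name the right tools (Lenglart/Doob) but do not carry out the estimate. Neither point is a wrong turn --- both are resolved in the classical references --- but as written the proposal is a correct plan with the two hardest steps left as acknowledged gaps rather than a proof.
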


\bre
Note that central limit theorems relaxing the stationarity and martingale difference assumptions for sequences of arrays exist. Neumann (2013) proposes such a central limit theorem for weakly dependent sequences of arrays. Such sequences should also satisfy a Lindeberg condition and conditions on covariances. In the rest of the paper, we use Shiryaev's result.
\ere

\mds

We consider problem (\ref{DA_SGL}), which is the adaptive SGL estimator. In the first step, we study the convergence rate of the first step unpenalized estimator, which is plugged in the adaptive specification. The convergence rate of a classic M-estimator is $T^{1/2}$, for $d$ fixed. For $d$ diverging, we need some additional assumptions.

\mds

The two next assumptions are similar to condition (F) of Fan and Peng (2004) and allows for controlling the minimum and maximum eigenvalues of the limits of the empirical Hessian and the score cross-product. We denote by $\lambda_{\min}(\MM)$ and $\lambda_{\max}(\MM)$ the minimum and maximum eigenvalues of any positive definite square matrix $\MM$.
\begin{assumption}\label{H6}
$\Hb_T$ and $\Mb_T$ exist. $\Hb_T$ is nonsingular, and there exist $b_1,b_2$ with  $0<b_1<b_2<\infty$ and $c_1,c_2$ with $0<c_1<c_2<\infty$ such that, for all $T$,
\beqw
b_1 < \lambda_{\min}(\Mb_T) < \lambda_{\max}(\Mb_T) < b_2, \, c_1 < \lambda_{\min}(\Hb_T) < \lambda_{\max}(\Hb_T) < c_2.
\eeqw
Let $\Vb_T = \Hb^{-1}_T \Mb_T \Hb^{-1}_T$, we deduce there exist $a_1,a_2$ with $0<a_1<a_2<\infty$ such that, for all $T$,
\beqw
a_1 < \lambda_{\min}(\Vb_T) < \lambda_{\max}(\Vb_T) < a_2.
\eeqw
\end{assumption}

\begin{assumption} \label{H7}
$\Eb[\{\nabla_{\theta} l(\eps_t;\theta_0) \nabla_{\theta'} l(\eps_t;\theta_0) \}^2] < \infty$, for every $d_T$ (and then of $T$).
\end{assumption}

\begin{assumption}\label{H8}
There exist some functions $\Psi(.)$ such that, for all $T$,
\beqw
\underset{k=1,\cdots,d_T}{\sup} \Eb[\partial_{\theta_k}l(\eps_t;\theta) \partial_{\theta_k}l(\eps_{t'};\theta)] \leq \Psi(|t-t'|),
\eeqw
and
\beqw
\underset{T}{\sup} \, \cfrac{1}{T} \overset{T}{\underset{t,t'=1}{\sum}} \Psi(|t-t'|) < \infty.
\eeqw
\end{assumption}

\begin{assumption} \label{H9}
Let $\zeta_{kl,t} := \partial^2_{\theta_k \theta_l} l(\eps_t;\theta_0) - \Eb[\partial^2_{\theta_k \theta_l} l(\eps_t;\theta_0)]$. There exist some functions $\chi(.)$ such that
\beqw
|\Eb[\zeta_{kl,t} \zeta_{k'l',t'}]| \leq \chi(|t-t'|),
\eeqw
and
\beqw
\underset{T}{\sup} \, \cfrac{1}{T} \overset{T}{\underset{t,t'=1}{\sum}} \chi(|t-t'|) < \infty.
\eeqw
\end{assumption}

\begin{assumption}\label{H10}
Let $\upsilon_t(C) := \underset{k,l,m = 1,\cdots,d_T}{\sup} \{ \underset{\theta:\|\theta-\theta_0\|_2 \leq \nu_T C}{\sup} |\partial^3_{\theta_k \theta_l \theta_m} l(\eps_t;\theta)|\}$, where $C > 0$ is a fixed constant and $\nu_T = (d_T/T)^{1/2}$. Then
\beqw
\eta(C):= \cfrac{1}{T^2} \overset{T}{\underset{t,t'=1}{\sum}} \Eb[\upsilon_t(C) \upsilon_{t'}(C)] < \infty.
\eeqw
\end{assumption}

\begin{theorem}\label{bound_prob_double_init}
Under Assumptions \ref{H1}-\ref{H3}, \ref{H6}-\ref{H10} and if $d^4_T = o(T)$, the sequence of unpenalized M-estimators solving $\tilde{\theta} = \underset{\theta \in \Theta}{\arg \, \min} \, \{\Gb_T l(\theta)\}$ satisfies
\beqw
\|\tilde{\theta} - \theta_0\|_2 = O_p((\cfrac{d_T}{T})^{\frac{1}{2}}).
\eeqw
Both vectors $\tilde{\theta}$ and $\theta_0$ depend on $T$ such that $\tilde{\theta} = \tilde{\theta}_T$ and $\theta_0 = \theta_{0,T}:= \theta_{0,\infty}.e_T$.
\end{theorem}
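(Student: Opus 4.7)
The plan is to mimic the structure of the proof of Theorem \ref{bound_prob} (the convexity argument plus a Taylor expansion), but with $\nu_T = (d_T/T)^{1/2}$ now depending on the growing dimension, and with no penalty terms to track. Fix $\varepsilon > 0$. I will show that there exists $C_\varepsilon > 0$ such that
\beqw
\Pb\bigl(\nu_T^{-1}\|\tilde\theta - \theta_0\|_2 > C_\varepsilon\bigr) < \varepsilon.
\eeqw
Since $\theta \mapsto \Gb_T l(\theta)$ is strictly convex (assumption \ref{H3}), the same inclusion as in (\ref{subset_G}) lets us restrict attention to $u \in \Rb^{d_T}$ with $\|u\|_2 = C_\varepsilon$: it suffices to show that with probability at least $1 - \varepsilon$,
\beqw
\inf_{\|u\|_2 = C_\varepsilon} \{\Gb_T l(\theta_0 + \nu_T u) - \Gb_T l(\theta_0)\} > 0.
\eeqw

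A third-order Taylor expansion gives
\beqw
\Gb_T l(\theta_0 + \nu_T u) - \Gb_T l(\theta_0) = \nu_T \dot{\Gb}_T l(\theta_0) u + \cfrac{\nu_T^2}{2} u' \Hb_T u + \cfrac{\nu_T^2}{2} \Rc_T(\theta_0,u) + \cfrac{\nu_T^3}{6} \nabla'\{u' \ddot{\Gb}_T l(\bar\theta) u\} u,
\eeqw
where $\Rc_T(\theta_0,u) = u'(\ddot{\Gb}_T l(\theta_0) - \Hb_T) u$. I will bound the four terms in order. First, for the score: by assumption \ref{H8}, $\Eb\|\dot{\Gb}_T l(\theta_0)\|_2^2 = T^{-2}\sum_{t,t'=1}^T \sum_{k=1}^{d_T} \Eb[\partial_{\theta_k}l(\eps_t;\theta_0)\partial_{\theta_k}l(\eps_{t'};\theta_0)] = O(d_T/T)$, so by Markov and Cauchy--Schwarz, $|\nu_T \dot{\Gb}_T l(\theta_0) u| = O_p(\nu_T^2 C_\varepsilon)$. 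Second, by assumption \ref{H6}, $\tfrac{\nu_T^2}{2} u'\Hb_T u \geq \tfrac{c_1}{2}\nu_T^2 C_\varepsilon^2$, which will be the dominant term.

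For the cross term $\Rc_T(\theta_0,u)$, assumption \ref{H9} gives $\Eb\|\ddot{\Gb}_T l(\theta_0) - \Hb_T\|_F^2 = O(d_T^2/T)$, hence $|\Rc_T(\theta_0,u)| \leq C_\varepsilon^2 \|\ddot{\Gb}_T l(\theta_0) - \Hb_T\|_F = O_p(C_\varepsilon^2 d_T/\sqrt{T})$. Multiplied by $\nu_T^2/2 = d_T/(2T)$, this is $O_p(C_\varepsilon^2 d_T^2/T^{3/2})$, which is $o_p(\nu_T^2 C_\varepsilon^2)$ precisely under $d_T^4 = o(T)$, i.e., $d_T^{2}/\sqrt{T} \to 0$. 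For the third-order remainder, assumption \ref{H10} and the same Cauchy--Schwarz computation as in the proof of Theorem \ref{bound_prob} give $|\nabla'\{u' \ddot{\Gb}_T l(\bar\theta) u\} u|^2 = O_p(C_\varepsilon^6 d_T^6 \eta(C_\varepsilon))$ (summing over $d_T^6$ index combinations), so the term contributes $O_p(\nu_T^3 d_T^3 C_\varepsilon^3) = O_p((d_T/T)^{3/2} d_T^3 C_\varepsilon^3)$, which is again $o(\nu_T^2 C_\varepsilon^2)$ under $d_T^4 = o(T)$ for $C_\varepsilon$ fixed.

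Combining these bounds, splitting the event $\{\Gb_T l(\theta_0+\nu_T u) \leq \Gb_T l(\theta_0)\}$ into four pieces as in (\ref{boundprobaproof}) and bounding each by $\varepsilon/4$, yields the desired inequality once $C_\varepsilon$ is taken large enough (the quadratic lower bound $\tfrac{c_1}{2}\nu_T^2 C_\varepsilon^2$ absorbs the score term, proportional to $C_\varepsilon$, and the two remainders, which vanish relative to $\nu_T^2 C_\varepsilon^2$). The main obstacle is the cross term $\Rc_T$: its size is precisely what forces the restrictive growth condition $d_T^4 = o(T)$, and the analogous restriction appeared in Fan and Peng (2004). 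The other terms, in particular the third-order remainder controlled by $\eta(C_\varepsilon)$ from assumption \ref{H10}, are handled exactly as in the fixed-$d$ proof of Theorem \ref{bound_prob}, with the appropriate rescaling by powers of $d_T$ stemming from the $d_T^3$-fold sum in the cubic form.
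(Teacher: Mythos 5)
Your overall strategy is the paper's: reduce to the sphere $\|u\|_2 = C_\varepsilon$ by convexity, Taylor-expand, and show that the quadratic term $\tfrac{c_1}{2}\nu_T^2 C_\varepsilon^2$ dominates; the score and quadratic terms are handled correctly. However, there is a genuine gap in your treatment of the third-order remainder, and it is exactly the place where the condition $d_T^4 = o(T)$ must be earned. You bound the sixfold sum $\sum_{k_1,k_2,k_3,l_1,l_2,l_3}|u_{k_1}u_{k_2}u_{k_3}u_{l_1}u_{l_2}u_{l_3}|\,\upsilon_t(C_\varepsilon)\upsilon_{t'}(C_\varepsilon)$ by counting $d_T^6$ index combinations, giving $|\nabla'\{u'\ddot{\Gb}_T l(\bar\theta)u\}u|^2 = O_p(C_\varepsilon^6 d_T^6\eta(C_\varepsilon))$ and hence a contribution $O_p(\nu_T^3 d_T^3 C_\varepsilon^3)$. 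The ratio of this to the quadratic term $\nu_T^2 C_\varepsilon^2$ is $\nu_T d_T^3 C_\varepsilon = C_\varepsilon d_T^{7/2}T^{-1/2}$, which does \emph{not} tend to zero under $d_T^4 = o(T)$; your bound would require $d_T^7 = o(T)$. The missing step is that the sum of the $|u|$-products factors as $\|u\|_1^6 \le d_T^3\|u\|_2^6$, which is what the paper uses: this improves the squared bound to $C_\varepsilon^6 d_T^3\eta(C_\varepsilon)$, the contribution to $O_p(\nu_T^3 d_T^{3/2}C_\varepsilon^3)$, and the ratio to $C_\varepsilon d_T^2 T^{-1/2}$, which vanishes precisely when $d_T^4 = o(T)$.

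A related misstatement is your claim that the cross term $\Rc_T$ is ``precisely what forces'' $d_T^4 = o(T)$. Your own estimate gives $\nu_T^2|\Rc_T|/2 = O_p(C_\varepsilon^2 d_T^2/T^{3/2})$, and dividing by $\nu_T^2 C_\varepsilon^2 = C_\varepsilon^2 d_T/T$ yields the ratio $d_T/\sqrt{T}$ (not $d_T^2/\sqrt{T}$), so the cross term only needs $d_T^2 = o(T)$, which is strictly weaker than, and implied by, the hypothesis. The binding constraint $d_T^4 = o(T)$ comes from the third-order remainder once it is bounded correctly as above; with these two corrections the argument closes and coincides with the paper's proof.
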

\bre
Note that this consistency result requires at most $d^4_T = o(T)$, as Theorem 1 of Fan and Peng (2004).
\ere
\begin{proof}
We proceed as in the proof of Theorem \ref{bound_prob}. We denote $\nu_T = (d_T / T)^{1/2}$ and we would like to prove that, for any $\eps > 0$, there exists $C_{\eps} > 0$ such that
\beq \label{consistency_double_first_un}
\Pb(\|\tilde{\theta} - \theta_0\|_2/\tilde{\nu}_T > C_{\eps}) < \eps.
\eeq
To prove (\ref{consistency_double_first_un}), it is sufficient to show that for any $\eps > 0$, there exists $C_{\eps} > 0$ such that
\beqw
\begin{array}{llll}
\Pb(\|\tilde{\theta}-\theta_0\|_2 > C_{\eps} \nu_T) & \leq & \Pb(\exists \uu \in \Rb^{d_T}, \|\uu\|_2 \geq C_{\eps}: \Gb_T l(\theta_0 + \nu_T \uu) \leq \Gb_T l(\theta_0)) \\
& = & \Pb(\exists \uu \in \Rb^{d_T}, \|\uu\|_2 = C_{\eps}: \Gb_T l(\theta_0 + \nu_T \uu) \leq \Gb_T l(\theta_0)),
\end{array}
\eeqw
by convexity. By a Taylor expansion of $\Gb_T l(\theta_0 + \nu_T \uu)$, we obtain
\beqw
\Gb_T l(\theta_0 + \nu_T \uu) = \Gb_T l(\theta_0) + \nu_T \dot{\Gb}_T l(\theta_0)  \uu  + \cfrac{\nu^2_T}{2} \uu' \ddot{\Gb}_T l(\theta_0) \uu + \cfrac{\nu^3_T}{6} \nabla' \{ \uu' \ddot{\Gb}_T l(\bar{\theta}) \uu \} \uu,
\eeqw
where $\bar{\theta} \in \Theta$ such that $\|\bar{\theta}-\theta_0\|_2 \leq C_{\eps} \nu_T$. We would like to prove
\beq \label{LocalMinDM}
\Pb(\exists \uu \in \Rb^{d_T}, \|\uu\|_2 = C_{\eps}: \nu_T \dot{\Gb}_T l(\theta_0)  \uu  + \cfrac{\nu^2_T}{2} \uu' \ddot{\Gb}_T l(\theta_0) \uu
+ \cfrac{\nu^3_T}{6} \nabla' \{ \uu' \ddot{\Gb}_T l(\bar{\theta}) \uu \} \uu \leq 0 ) < \eps.
\eeq
To do so, we focus on each quantity of the Taylor expansion to extract the dominant term. First, for $a > 0$ and the Markov inequality, we have for the score term
\beqw
\begin{array}{llll}
\Pb(\underset{\uu: \|\uu\|_2 =C_{\eps}}{\sup} |\dot{\Gb}_T l(\theta_0) \uu| > a) & \leq & \Pb(\underset{\uu: \|\uu\|_2 =C_{\eps}}{\sup} \|\dot{\Gb}_T l(\theta_0)\|_2 \|\uu\|_2 > a) \\
& \leq & \Pb(\|\dot{\Gb}_T l(\theta_0)\|_2 > \frac{a}{C_{\eps}}) \\
& \leq & (\frac{C_{\eps}}{a})^2 \Eb[\|\dot{\Gb}_T l(\theta_0)\|^2_2]\\
& \leq & (\frac{C_{\eps}}{a})^2 \overset{d_T}{\underset{k=1}{\sum}} \Eb[(\partial_{\theta_k} \Gb_T l(\theta_0))^2] \\
& = &  (\frac{C_{\eps}}{a})^2 \frac{1}{T^2} \overset{T}{\underset{t,t'=1}{\sum}} \overset{d_T}{\underset{k=1}{\sum}} \Eb[\partial_{\theta_k} l(\eps_t;\theta_0) \partial_{\theta_k} l(\eps_{t'};\theta_0)] \\
& \leq & (\frac{C_{\eps}}{a})^2 \{\frac{1}{T^2} \overset{T}{\underset{t,t'=1}{\sum}} \Psi(|t-t'|) \} .d_T.
\end{array}
\eeqw
By assumption \ref{H8}, $\underset{k=1,\cdots,d_T}{\sup} \Eb[\partial_{\theta_k} l(\eps_t;\theta_0) \partial_{\theta_k} l(\eps_{t'};\theta_0)] \leq \Psi(|t-t'|)$ and $\frac{1}{T} \overset{T}{\underset{t,t'=1}{\sum}} \Psi(|t-t'|) < \infty$. This implies
\beqw
\Pb(\underset{\uu: \|\uu\|_2 =C_{\eps}}{\sup} |\dot{\Gb}_T l(\theta_0) \uu| > a) \leq \cfrac{C^2_{\eps} d_T}{T a^2} K_1,
\eeqw
for some constant $K_1 > 0$.

\mds

We now focus on the hessian quantity that can be rewritten as
\beqw
\uu' \ddot{\Gb}_T l(\theta_0) \uu = \uu' \Eb[\ddot{\Gb}_T l(\theta_0)] \uu + \Rc_T(\theta_0),
\eeqw
where $\Rc_T(\theta_0) = \overset{d_T}{\underset{k,l=1}{\sum}} \uu_k \uu_l \{\partial^2_{\theta_k \theta_l} \Gb_T l(\theta_0) - \Eb[\partial^2_{\theta_k \theta_l} \Gb_T l(\theta_0)]\}$. We have
\beqw
\Eb[\Rc_T(\theta_0)] = 0, \; \text{Var}(\Rc_T(\theta_0)) = \frac{1}{T^2} \overset{T}{\underset{t,t'=1}{\sum}} \overset{d_T}{\underset{k,k',l,l'=1}{\sum}} \uu_k \uu_{k'} \uu_l \uu_{l'} \Eb[ \zeta_{kl,t}.\zeta_{k'l',t'} ],
\eeqw
where $\zeta_{kl,t} = \partial^2_{\theta_k \theta_l} l(\eps_t;\theta_0) - \Eb[\partial^2_{\theta_k \theta_l} l(\eps_t;\theta_0)]$. Let $b>0$, we deduce by the Markov inequality and assumption \ref{H9},
\beqw
\Pb(|\Rc_T(\theta_0)| > b) \leq \cfrac{1}{b^2}\Eb[ \Rc^2_T(\theta_0)] \leq \cfrac{K_2}{b^2} \cfrac{\|\uu\|^4_2 d^2_T}{T} \leq \cfrac{K_2 C^4_{\eps} d^2_T}{b^2 T},
\eeqw
where $K_2 > 0$. Furthermore, by assumption \ref{H6},
\beqw
\uu' \Eb[\ddot{\Gb}_T l(\theta_0)] \uu \geq \lambda_{\min}(\Hb_T) \uu' \uu.
\eeqw

\mds

As for the third order term, we have
\beqw
\begin{array}{llll}
|\nabla \{\uu' \ddot{\Gb}_T l(\bar{\theta}) \uu \} \uu|^2 & \leq & \cfrac{1}{T^2} \overset{T}{\underset{t,t'=1}{\sum}} \underset{k_1,k_2,k_3}{\sum} \underset{l_1,l_2,l_3}{\sum}| u_{k_1} u_{k_2} u_{k_3} u_{l_1} u_{l_2} u_{l_3}| |\partial^3_{\theta_{k_1} \theta_{k_2} \theta_{k_3}} l(\eps_t;\bar{\theta}) . \partial^3_{\theta_{l_1} \theta_{l_2} \theta_{l_3}} l(\eps_{t'};\bar{\theta})| \\
& \leq & \|\uu\|^6_2 d^3_T \cfrac{1}{T^2} \overset{T}{\underset{t,t'=1}{\sum}} \upsilon_t(C_{\eps}) \upsilon_{t'}(C_{\eps}),
\end{array}
\eeqw
where
\beqw
\upsilon_t(C_0) = \underset{k_1 k_2 k_3}{\sup} \{\underset{\theta:\|\theta - \theta_0\|_2 \leq \nu_T C_0}{\sup} |\partial^3_{\theta_{k_1} \theta_{k_2} \theta_{k_3}} l(\eps_t;\theta)|\}.
\eeqw
Note that $\upsilon_t(C_0)$ depends on $d_T$ and $C_0$. By assumption \ref{H10}, we have
\beqw
\eta(C_0):= \cfrac{1}{T^2} \overset{T}{\underset{t,t'=1}{\sum}} \Eb[\upsilon_t(C_0) \upsilon_{t'}(C_0)] < \infty.
\eeqw
By the Markov inequality, for $c>0$, we conclude that
\beqw
\Pb(\exists \uu, \|\uu\|_2 = C_{\eps}: \cfrac{\nu^2_T}{6} \underset{\|\bar{\theta}-\theta_0\|_2 \leq \nu_T C_{\eps}}{\sup} |\nabla \{\uu' \ddot{\Gb}_T l(\bar{\theta}) \uu \} \uu|>c) \leq \cfrac{\nu^4_T d^3_T C^6_{\eps}}{36 c^2} \eta(C_{\eps}).
\eeqw

\mds

We can now bound (\ref{LocalMinDM}) thanks to proper choices of $a,b,c$ and $C_{\eps}$. We denote by $\delta_T = \lambda_{\min}(\Hb_T) C^2_{\eps} \nu_T$, and using $\cfrac{\nu_T}{2} \Eb[\uu' \ddot{\Gb}_T l(\theta_0) \uu ]\geq \delta_T$, we have
\beqw
\begin{array}{llll}
\Pb(\exists \uu \in \Rb^{d_T}, \|\uu\|_2 = C_{\eps}: \dot{\Gb}_T l(\theta_0)  \uu  + \cfrac{\nu_T}{2} \uu' \ddot{\Gb}_T l(\theta_0) \uu
+ \cfrac{\nu^2_T}{6} \nabla \{ \uu' \ddot{\Gb}_T l(\bar{\theta}) \uu \} \uu \leq 0) &&\\
\leq  \Pb(\exists \uu \in \Rb^{d_T}, \|\uu\|_2 = C_{\eps}: |\dot{\Gb}_T l(\theta_0) \uu| > \delta_T/4)  + \Pb(\exists \uu \in \Rb^{d_T}, \|\uu\|_2 = C_{\eps}:\frac{\nu_T}{2}|\Rc_T(\theta_0)| > \delta_T/4) && \\
+ \Pb(\exists \uu \in \Rb^{d_T}, \|\uu\|_2 = C_{\eps}: \frac{\nu^2_T}{6}\underset{\bar{\theta}:\|\bar{\theta}-\theta_0\|_2 < \nu_T C_{\eps}}{\sup}|\nabla \{\uu' \ddot{\Gb}_T l(\bar{\theta}) \uu \} \uu| > \delta_T/4) &&\\
\leq \cfrac{16 C^2_{\eps} d_T K_1}{T \delta^2_T} + \cfrac{4 \nu^2_T d^2_T C^4_{\eps}}{T \delta^2_T}+ \cfrac{16 \nu^4_T d^3_T C^6_{\eps}}{36 \delta^2_T} \eta(C_{\eps}) &&\\
\leq C_1 \cfrac{d_T}{T C^2_{\eps} \nu^2_T} + C_2 \cfrac{d^2_T}{T}+ C_3 \nu^2_T d^3_T C^2_{\eps} \eta(C_{\eps}),
\end{array}
\eeqw
where $C_1,C_2,C_3$ are strictly positive constants. We chose $\nu_T = (\cfrac{d_T}{T})^{\frac{1}{2}}$, we then deduce
\beqw
\begin{array}{llll}
\Pb(\exists \uu \in \Rb^{d_T}, \|\uu\|_2 = C_{\eps}:\nu_T \dot{\Gb}_T l(\theta_0)  \uu  + \cfrac{\nu^2_T}{2} \uu' \ddot{\Gb}_T l(\theta_0) \uu
+ \cfrac{\nu^3_T}{6} \nabla \{ \uu' \ddot{\Gb}_T l(\bar{\theta}) \uu \} \uu \leq 0) && \\
\leq \cfrac{C_1}{C^2_{\eps}} + C_2 \cfrac{d^2_T}{T} + \cfrac{d^4_T C^2_{\eps}}{T} \eta(C_{\eps}).
\end{array}
\eeqw
Now we fix $C_{\eps}$ sufficiently large enough, such that $C_1/C^2_{\eps}<\eps/3$. Once this constant is fixed, there exists a $T_0$ such that for $T>T_0$ we have $C_2 \frac{d^2_T}{T} < \eps/3$ and $C_3 \frac{d^4_T C^2_{\eps}}{T} \eta(C_{\eps}) < \eps/3$ under the assumption that $d^4_T = o(T)$.
Consequently, we obtain
\beqw
\Pb(\exists \uu \in \Rb^{d_T}, \|\uu\|_2 = C_{\eps}: \Gb_T l(\theta_0) + \nu_T \dot{\Gb}_T l(\theta_0)  \uu  + \cfrac{\nu^2_T}{2} \uu' \ddot{\Gb}_T l(\theta_0) \uu
+ \cfrac{\nu^3_T}{6} \nabla \{ \uu' \ddot{\Gb}_T l(\bar{\theta}) \uu \} \uu \leq 0) < \eps.
\eeqw
This proves (\ref{consistency_double_first_un}), that is $\|\tilde{\theta}-\theta_0\|_2 = O_p((\cfrac{d_T}{T})^{\frac{1}{2}})$.
\end{proof}

The first step estimator used for the adaptive weights is $(T/d_T)^{1/2}$-consistent. However, the estimated quantities on $\Ac^c$ converge to zero by consistency. We then propose a slight modification of the first step estimator, denoted $\tilde{\tilde{\theta}}$, which disappears asymptotically as follows
\beqw
\tilde{\tilde{\theta}} = \tilde{\theta} + e_T,
\eeqw
such that $e_T \rightarrow 0$ is a strictly positive quantity. We choose $e_T = T^{-\kappa}$ with $\kappa > 0$. This means we add in the adaptive weights a power of $T$  to the first step estimator, that is
\beqw
\alpha^{(k)}_{T,i} = |\tilde{\tilde{\theta}}^{(k)}_i| = |\tilde{\theta} + T^{-\kappa}|^{-\eta}, \; \xi_{T,l} = \|\tilde{\tilde{\theta}}^{(l)}\|_2 = \|\tilde{\theta}^{(l)} + T^{-\kappa}\|^{-\mu}_2.
\eeqw

\begin{theorem}\label{bound_prob_double}
Under assumptions \ref{H1}-\ref{H3}, \ref{H6}-\ref{H10}, if $d^4_T = o(T)$, and if $\cfrac{\gamma_T}{\sqrt{T}} T^{\frac{c}{2} + \kappa \mu} \underset{T \rightarrow \infty}{\longrightarrow} 0$, $\cfrac{\lambda_T}{\sqrt{T}} T^{\kappa \eta } \underset{T \rightarrow \infty}{\longrightarrow} 0$, then the sequence of penalized estimators $\hat{\theta}$ solving \ref{DA_SGL} satisfies
\beqw
\|\hat{\theta} - \theta_0\|_2 = O_p((\cfrac{d_T}{T})^{\frac{1}{2}}).
\eeqw
\end{theorem}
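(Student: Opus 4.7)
The strategy combines the unpenalized consistency machinery of Theorem~\ref{bound_prob_double_init} with the penalty-difference bounds from Theorem~\ref{bound_prob}. Set $\nu_T = (d_T/T)^{1/2}$ and aim, for every $\eps>0$, to find $C_\eps>0$ with $\Pb(\|\hat\theta-\theta_0\|_2>C_\eps\nu_T)<\eps$. Strict convexity of $\Gb_T\psi(\cdot)$ (Assumption~\ref{H3} and convexity of the two penalties) reduces this, as in the earlier proofs, to bounding
\beqw
\Pb\bigl(\exists\,\uu\in\Rb^{d_T},\ \|\uu\|_2=C_\eps:\ \Gb_T\psi(\theta_0+\nu_T\uu)\leq\Gb_T\psi(\theta_0)\bigr).
\eeqw
A third-order Taylor expansion of $\Gb_T l$ around $\theta_0$ decomposes the difference $\Gb_T\psi(\theta_0+\nu_T\uu)-\Gb_T\psi(\theta_0)$ into a score term $\nu_T\dot\Gb_T l(\theta_0)\uu$, a Hessian quadratic form (itself split into its expectation plus the fluctuation $\Rc_T(\theta_0)$), a third-order remainder, and the two penalty increments $\Delta\pp_1,\Delta\pp_2$.

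The unpenalized pieces I would handle verbatim as in Theorem~\ref{bound_prob_double_init}: Markov together with Assumption~\ref{H8} controls the score; Assumption~\ref{H9} controls $\Rc_T(\theta_0)$; Assumption~\ref{H6} lower-bounds $\uu'\Eb[\ddot\Gb_T l(\theta_0)]\uu$ by $c_1 C_\eps^2$; Assumption~\ref{H10} together with $d_T^4=o(T)$ controls the third-order remainder. Writing $\delta_T=c_1 C_\eps^2\nu_T$, the positive term $\tfrac{\nu_T}{2}\uu'\Eb[\ddot\Gb_T l(\theta_0)]\uu\geq\nu_T\delta_T$ absorbs the other unpenalized contributions with probability at least $1-3\eps/5$ once $C_\eps$ and $T$ are large enough.

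The genuinely new step is controlling the two adaptive penalty increments. Since $\theta_{0,i}^{(k)}=0$ for $i\in\Ac_k^c$ and $\theta_0^{(l)}=\mathbf{0}$ for $l\notin\Sc$, contributions from inactive coordinates to $\Delta\pp_1$ and from inactive groups to $\Delta\pp_2$ are non-negative and may be discarded in the lower-bound argument. The remaining active-set contributions admit, via the reverse triangle inequality,
\beqw
|\Delta\pp_1|\leq \cfrac{\lambda_T\nu_T}{T}\bigl(\underset{k\in\Sc,\,i\in\Ac_k}{\max}\alpha_{T,i}^{(k)}\bigr)\|\uu_{\Ac}\|_1,\qquad |\Delta\pp_2|\leq \cfrac{\gamma_T\nu_T}{T}\bigl(\underset{l\in\Sc}{\max}\xi_{T,l}\bigr)\underset{l\in\Sc}{\sum}\|\uu^{(l)}\|_2.
\eeqw
The shift $e_T=T^{-\kappa}$ incorporated in $\tilde{\tilde{\theta}}$ supplies deterministic ceilings $\alpha_{T,i}^{(k)}\leq T^{\kappa\eta}$ and $\xi_{T,l}\leq T^{\kappa\mu}$, uniformly in $i,l$. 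Combining these with $\text{card}(\Ac)=O(T^c)$, $\text{card}(\Sc)=O(T^c)$, $\|\uu_\Ac\|_1\leq\sqrt{\text{card}(\Ac)}\|\uu\|_2$ and $\sum_{l\in\Sc}\|\uu^{(l)}\|_2\leq\text{card}(\Sc)\|\uu\|_2$ (the loose bound used already in Theorem~\ref{bound_prob}), produces the deterministic estimates
\beqw
|\Delta\pp_1|\leq C\,\lambda_T T^{-1/2+\kappa\eta}\cdot\nu_T\delta_T,\qquad |\Delta\pp_2|\leq C\,\gamma_T T^{-1/2+c/2+\kappa\mu}\cdot\nu_T\delta_T,
\eeqw
for some constant $C>0$. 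By the hypothesised rates $\lambda_T T^{-1/2+\kappa\eta}\to 0$ and $\gamma_T T^{-1/2+c/2+\kappa\mu}\to 0$, the two prefactors vanish, so $|\Delta\pp_j|=o(\nu_T\delta_T)$ for $T$ large; a Markov-type argument then yields $\Pb(|\Delta\pp_j|>\nu_T\delta_T/8)<\eps/5$ for $T$ large.

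Summing the five contributions and choosing $C_\eps$ sufficiently large makes the probability of $\{\Gb_T\psi(\theta_0+\nu_T\uu)\leq\Gb_T\psi(\theta_0)\}$ less than $\eps$, whence $\|\hat\theta-\theta_0\|_2=O_p(\nu_T)$. The principal obstacle is the uniform control of the random adaptive weights under a diverging dimension: without the shift $e_T$, any first-step coordinate landing within the $O_p((d_T/T)^{1/2})$ error band of zero would make $\alpha_{T,i}^{(k)}$ or $\xi_{T,l}$ blow up in probability, destroying the deterministic ceiling above. The rate conditions on $\lambda_T$ and $\gamma_T$ are calibrated exactly so that the resulting worst-case factors $T^{\kappa\eta}$ and $T^{\kappa\mu}$ remain negligible against the quadratic lower bound $\nu_T\delta_T$.
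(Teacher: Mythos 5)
Your proposal follows essentially the same route as the paper: the convexity reduction to a sphere of radius $C_\eps\nu_T$, the third-order Taylor decomposition with the unpenalized terms controlled exactly as in Theorem \ref{bound_prob_double_init} via Assumptions \ref{H6}--\ref{H10}, and the deterministic ceilings $\alpha^{(k)}_{T,i}\leq T^{\kappa\eta}$, $\xi_{T,l}\leq T^{\kappa\mu}$ supplied by the shift $e_T=T^{-\kappa}$ combined with $\mathrm{card}(\Ac)=O(T^c)$ to make the penalty increments $o(\nu_T\delta_T)$ under the stated rate conditions. The only cosmetic difference is that you bound the group penalty by $\mathrm{card}(\Sc)\|\uu\|_2$ where the paper uses Cauchy--Schwarz to get $\sqrt{d_T}\|\uu\|_2$; both are absorbed by the hypothesis $\gamma_T T^{c/2+\kappa\mu}/\sqrt{T}\to 0$, so the argument is sound.
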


\begin{remark}
Note that $d^4_T = o(T)$ is as in Fan and Peng (2004), Theorem 1. The asymptotic behaviors of the regularization terms provide a condition such that the penalty terms are negligible with respect to $(d_T/T)^{1/2}$.
\end{remark}

\begin{proof}
We proceed as we did for proving Theorem \ref{bound_prob_double_init}. Let $\nu_T = (d_T/T)^{1/2} $. We would like to prove that for any $\eps > 0$, there exists $C_{\eps} > 0$ such that
\beq \label{consistency_double_first}
\Pb(\|\hat{\theta} - \theta_0\|_2/\nu_T > C_{\eps}) < \eps.
\eeq
To prove (\ref{consistency_double_first}), we show
\beq \label{LocalMinDM_ada}
\begin{array}{llll}
\Pb(\exists \uu \in \Rb^{d_T}, \|\uu\|_2=C_{\eps}: \dot{\Gb}_T l(\theta_0) \uu + \cfrac{\nu_T}{2} \uu' \ddot{\Gb}_T l(\theta_0) \uu + \cfrac{\nu^2_T}{6}\nabla'\{ \uu' \ddot{\Gb}_T l(\bar{\theta}) \uu\}\uu \\
+ \nu^{-1}_T\{\pp_1(\lambda_T,\tilde{\tilde{\theta}},\theta_0 + \nu_T \uu)-\pp_1(\lambda_T,\tilde{\tilde{\theta}},\theta_0) + \pp_2(\gamma_T,\tilde{\tilde{\theta}},\theta_0 + \nu_T \uu)-\pp_2(\gamma_T,\tilde{\tilde{\theta}},\theta_0\})\leq 0) < \eps. &&
\end{array}
\eeq
a relationship obtained by convexity and a Taylor expansion.
\mds

The score quantity can be upper bounded as
\beqw
|\dot{\Gb}_T l(\theta_0) \uu| \leq \|\dot{\Gb}_T l(\theta_0)\|_2 \|\uu\|_2 = O_p((\cfrac{d_T}{T})^{\frac{1}{2}}) \|\uu\|_2 = O_p(\nu_T) \|\uu\|_2,
\eeqw
where we used assumption \ref{H8} to obtain the bound in probability of the score.

\mds

As for the third order term, we have by the Cauchy-Schwartz inequality
\beqw
\begin{array}{llll}
|\nabla'\{ \uu' \ddot{\Gb}_T l(\bar{\theta}) \uu\}\uu |^2 & \leq & \|\uu\|^6_2 d^3_T \cfrac{1}{T^2} \overset{T^2}{\underset{t,t'=1}{\sum}}\{\overset{d_T}{\underset{k_1,l_1,m_1=1}{\sum}} \overset{d_T}{\underset{k_2,l_2,m_2=1}{\sum}}\partial^3_{\theta_{k_1} \theta_{l_1} \theta_{m_1}} l(\eps_t;\bar{\theta})  \partial^3_{\theta_{k_2} \theta_{l_2} \theta_{m_2}}l(\eps_{t'};\bar{\theta})\} \\
& = & \|\uu\|^6_2 d^3_T \eta(C_{\eps}).
\end{array}
\eeqw
This implies
\beqw
\nabla'\{ \uu' \ddot{\Gb}_T l(\bar{\theta}) \uu\}\uu =  O_p(d^{3/2}_T \|\uu\|^3_2).
\eeqw
Hence by the Markov inequality
\beqw
\Pb(\exists \uu \in \Rb^{d_T}, \|\uu\|_2 = C_{\eps}: |\nu^2_T \nabla'\{ \uu' \ddot{\Gb}_T l(\bar{\theta}) \uu\}\uu | > a) \leq \cfrac{\nu^4_T C^6_{\eps} d^3_T}{a^2} \eta(C_{\eps}).
\eeqw
where we used assumption \ref{H10}.

\mds

Finally, the hessian quantity can be treated as in the proof of Theorem \ref{bound_prob_double_init}. We denote by $\Rc_T(\theta_0) = \overset{d_T}{\underset{k,l=1}{\sum}} \uu_k \uu_l \{\partial^2_{\theta_k \theta_l} \Gb_T l(\theta_0) - \Eb[\partial^2_{\theta_k \theta_l} \Gb_T l(\theta_0)]\}$. We have
\beqw
\uu' \ddot{\Gb}_T l(\theta_0) \uu = \uu' \Eb[\ddot{\Gb}_T l(\theta_0)] \uu + \Rc_T(\theta_0).
\eeqw
By assumption \ref{H9} and the Markov inequality, for any $\kappa > 0$, we obtain
\beqw
\Pb(|\Rc_T(\theta_0)| > \kappa) \leq \cfrac{1}{\kappa^2}\Eb[ \Rc^2_T(\theta_0)] \leq \cfrac{K_2}{\kappa^2} \cfrac{\|\uu\|^4_2 d^2_T}{T} \leq \cfrac{K_2 C^4_{\eps} d^2_T}{\kappa^2 T},
\eeqw
with $K_2>0$. This relationship holds for any $\kappa > 0$. Then for $T$ large enough, we deduce that $|\Rc_T(\theta_0)| = o_p(1)$. Consequently
\beqw
\cfrac{\nu^2_T}{2}\uu' \ddot{\Gb}_T l(\theta_0) \uu \geq \cfrac{\nu^2_T}{2} \lambda_{\min}(\Hb_T)\|\uu\|^2_2 + o_p(1) \nu^2_T\|\uu\|^2_2.
\eeqw

\mds

We focus on the penalty terms. We have
\beqw
\begin{array}{llll}
\pp_1(\lambda_T,\tilde{\tilde{\theta}},\theta_0 + \nu_T \uu)-\pp_1(\lambda_T,\tilde{\tilde{\theta}},\theta_0) & = & \lambda_T T^{-1}\underset{k \in \Sc }{\sum} \underset{i \in \Ac_k}{\sum} \alpha^{(k)}_{T,i} \{|\theta^{(k)}_{0,i} + \nu_T \uu^{(k)}_i|  - |\theta^{(k)}_{0,i}| \},  \\
\text{and} \; |\pp_1(\lambda_T,\tilde{\tilde{\theta}},\theta_0 + \nu_T \uu)-\pp_1(\lambda_T,\tilde{\tilde{\theta}},\theta_0) | & \leq & \lambda_T T^{-1} \underset{k \in \Sc }{\sum} \underset{i \in \Ac_k}{\sum} \alpha^{(k)}_{T,i} \nu_T |\uu^{(k)}_i|.
\end{array}
\eeqw
As for the $l^1/l^2$ norm, we obtain
\beqw
\begin{array}{llll}
\pp_2(\gamma_T,\tilde{\tilde{\theta}},\theta_0 + \nu_T \uu)-\pp_2(\gamma_T,\tilde{\tilde{\theta}},\theta_0) & = &  \gamma_T T^{-1} \underset{l \in \Sc 1}{\sum} \xi_{T,l} \{ \|\theta^{(l)}_0 + \nu_T \uu\|_2 - \|\theta^{(l)}_0\|_2\} \\
\text{and} \; |\pp_2(\gamma_T,\tilde{\tilde{\theta}},\theta_0 + \nu_T \uu)-\pp_2(\gamma_T,\tilde{\tilde{\theta}},\theta_0)|& \leq & \gamma_T T^{-1} \underset{l \in \Sc}{\sum} \xi_{T,l} \nu_T \|\uu^{(l)}\|_2.
\end{array}
\eeqw
For the $l^1$ norm penalty, using $\{\underset{k \in \Sc , i \in \Ac_k}{\min} \; |\tilde{\tilde{\theta}}^{(k)}_i|\}^{-\eta} \leq T^{\kappa \eta}$, then
\beqw
\begin{array}{llll}
\lambda_T T^{-1} \underset{k \in \Sc }{\sum} \underset{i \in \Ac_k}{\sum} \alpha^{(k)}_{T,i} \nu_T |\uu^{(k)}_i| & \leq & \lambda_T T^{-1}\nu_T \{\underset{k \in \Sc }{\sum} \underset{i \in \Ac_k}{\sum} |\tilde{\tilde{\theta}}^{(k)}_i|^{-2\eta}\}^{1/2}  \|\uu\|_2 \\
& \leq & \lambda_T T^{-1} \nu_T  \cfrac{\sqrt{d_T}}{\{\underset{k \in \Sc , i \in \Ac_k}{\min} \; |\tilde{\tilde{\theta}}^{(k)}_i|\}^{\eta}} \|\uu\|_2 \\
& \leq & \lambda_T T^{-1} \nu_T  \sqrt{d_T} T^{\kappa \eta} \|\uu\|_2,
\end{array}
\eeqw
by the Cauchy-Schwartz inequality. Then if $\lambda_T T^{\frac{c}{2}-1+\kappa \eta} $ is bounded, we obtain
\beqw
\pp_1(\lambda_T,\tilde{\tilde{\theta}},\theta_0 + \nu_T \uu)-\pp_1(\lambda_T,\tilde{\tilde{\theta}},\theta_0) = O(\nu^2_T)\|\uu\|_2.
\eeqw
As for the $l^1/l^2$ term, using $\{\underset{l \in \Sc}{\min}\; \|\tilde{\tilde{\theta}}^{(l)}\|_2\}^{-\mu} \leq T^{\kappa \mu}$, we obtain
\beqw
\begin{array}{llll}
\gamma_T T^{-1}  \overset{m}{\underset{l=1}{\sum}} \xi_{T,l} \nu_T \|\uu^{(l)}\|_2 & \leq & \gamma_T T^{-1} \nu_T \{\underset{l \in \Sc}{\sum} \|\tilde{\tilde{\theta}}^{(l)}\|^{-2\mu}_2 \}^{1/2} \|\uu\|_2 \\
& \leq & \gamma_T T^{-1} \nu_T \cfrac{\sqrt{d_T}}{\{\underset{l \in \Sc}{\min}\; \|\tilde{\tilde{\theta}}^{(l)}\|_2\}^{\mu}} \|\uu\|_2 \\
& \leq & \gamma_T T^{-1} \nu_T \sqrt{d_T} T^{\kappa \mu} \|\uu\|_2,
\end{array}
\eeqw
by the Cauchy-Schwartz inequality. Then if $\gamma_T T^{\frac{c}{2}-1+\kappa \mu}$ is bounded, we obtain
\beqw
\pp_2(\gamma_T,\tilde{\tilde{\theta}},\theta_0 + \nu_T \uu)-\pp_2(\gamma_T,\tilde{\tilde{\theta}},\theta_0) = O(\nu^2_T)\|\uu\|_2.
\eeqw
\mds
We now can prove (\ref{LocalMinDM_ada}). Let $\delta_T = \lambda_{\min}(\Hb_T) C^2_{\eps} \nu_T$ and using $\frac{\nu_T}{2}\Eb[\uu' \ddot{\Gb}_T l(\theta_0)\uu] \geq \delta_T$, we have
\beqw
\begin{array}{llll}
\Pb(\exists \uu \in \Rb^{d_T}, \|\uu\|_2 = C_{\eps}: \dot{\Gb}_T l(\theta_0)  \uu  + \nu_T \uu' \ddot{\Gb}_T l(\theta_0) \uu/2
+ \nu^2_T \nabla \{ \uu' \ddot{\Gb}_T l(\bar{\theta}) \uu \} \uu/6 && \\
+ \nu^{-1}_T\{\pp_1(\lambda_T,\tilde{\tilde{\theta}},\theta_0 + \nu_T \uu)-\pp_1(\lambda_T,\tilde{\tilde{\theta}},\theta_0) + \pp_2(\gamma_T,\tilde{\tilde{\theta}},\theta_0 + \nu_T \uu)-\pp_2(\gamma_T,\tilde{\tilde{\theta}},\theta_0)\} \leq 0) &&\\

\leq \Pb(\exists \uu \in \Rb^{d_T}, \|\uu\|_2 = C_{\eps}: |\nu_T \uu' \ddot{\Gb}_T l(\theta_0) \uu/2| \leq |\dot{\Gb}_T l(\theta_0)  \uu| + |\nu^2_T \nabla \{ \uu' \ddot{\Gb}_T l(\bar{\theta}) \uu \} \uu/6 | && \\
+ \nu^{-1}_T \{| \pp_1(\lambda_T,\tilde{\tilde{\theta}},\theta_0 )-\pp_1(\lambda_T,\tilde{\tilde{\theta}},\theta_0+ \nu_T \uu)| + |\pp_2(\gamma_T,\tilde{\tilde{\theta}},\theta_0)-\pp_2(\gamma_T,\tilde{\tilde{\theta}},\theta_0+\nu_T \uu)|\} ) &&\\

\leq \Pb(\exists \uu \in \Rb^{d_T}, \|\uu\|_2 = C_{\eps}: |\dot{\Gb}_T l(\theta_0) \uu| > \delta_T/8) + \Pb(\exists \uu \in \Rb^{d_T}, \|\uu\|_2 = C_{\eps}: \frac{\nu_T}{2} |\Rc_T(\theta_0)| > \delta_T/8) &&\\
+ \Pb(\exists \uu \in \Rb^{d_T}, \|\uu\|_2 = C_{\eps}: |\frac{\nu^2_T}{6}\nabla \{\uu' \ddot{\Gb}_T l(\bar{\theta}) \uu \} \uu| > \delta_T/8) && \\
+ \Pb(\exists \uu \in \Rb^{d_T}, \|\uu\|_2 = C_{\eps}: |\pp_1(\lambda_T,\tilde{\tilde{\theta}},\theta_0)-\pp_1(\lambda_T,\tilde{\tilde{\theta}},\theta_0 + \nu_T \uu) | > \nu_T \delta_T/8) && \\
+ \Pb((\exists \uu \in \Rb^{d_T}, \|\uu\|_2 = C_{\eps}:|\pp_2(\gamma_T,\tilde{\tilde{\theta}},\theta_0)-\pp_2(\gamma_T,\tilde{\tilde{\theta}},\theta_0+ \nu_T \uu)| > \nu_T \delta_T/8) && \\
\leq \cfrac{C_{st}}{C^2_{\eps}} + C_{st} \{\nu^2_T C^6_{\eps} d^3_T \eta(C_{\eps})\} + \cfrac{C_{st} \nu^2_T d^2_T C^4_{\eps}}{T \delta^2_T} + \eps/5 + \eps/5 && \\
< \eps, &&
\end{array}
\eeqw
with $C_{st}>0$ a generic constant. We used $d^4_T = o(T)$ and for $C_{\eps}$ large enough
\beqw
\begin{array}{llll}
\Pb(\exists \uu \in \Rb^{d_T}, \|\uu\|_2 = C_{\eps}: |\pp_1(\lambda_T,\tilde{\tilde{\theta}},\theta_0)-\pp_1(\lambda_T,\tilde{\tilde{\theta}},\theta_0 + \nu_T \uu) | > \nu_T \delta_T/8) & \leq & \eps/5,\\
\Pb(\exists \uu \in \Rb^{d_T}, \|\uu\|_2 = C_{\eps}:|\pp_2(\gamma_T,\tilde{\tilde{\theta}},\theta_0)-\pp_2(\gamma_T,\tilde{\tilde{\theta}},\theta_0+ \nu_T \uu)| > \nu_T \delta_T/8) & \leq & \eps/5.
\end{array}
\eeqw
Thus we obtain for $C_{\eps}$ and $T$ large enough, with the conditions $\gamma_T T^{\frac{c}{2}-1+\kappa \mu} \rightarrow 0$ and $\lambda_T T^{\frac{c}{2}-1+\kappa \eta} \rightarrow 0$ that
\beqw
\|\hat{\theta} - \theta_0\|_2 = O_p(\nu_T) = O_p((\cfrac{d_T}{T})^{\frac{1}{2}}).
\eeqw
\end{proof}
To achieve the oracle property, we need some additional assumptions regarding the adaptive penalty components.
\begin{assumption} \label{H12}
For any $T$, there exists $\beta$ such that $0 < \beta < \underset{i \in \Ac_k}{\min} \, \theta_{0,i,\Ac_k}, k \in \Sc$. Moreover,
\beqw
\beta^{-1}T^{-1}\{\lambda_T d^{1/2}_T \Eb[\underset{k \in \Sc, i \in \Ac_k}{\max} \; \alpha_{T,\Ac_k,i} ] + \gamma_T \Eb[ \underset{k \in \Sc}{\max} \; \xi_{T,k} ] \} \underset{T \rightarrow \infty}{\longrightarrow} 0.
\eeqw
\end{assumption}

\begin{assumption}\label{H13}
The model complexity is assumed to behave as $d^5_T = o(T)$, which implies that $0 < c < \frac{1}{5}$. The regularization parameters are chosen such that they satisfy
\beqw
\begin{array}{llll}
\cfrac{\gamma_T}{\sqrt{T}} T^{\frac{c}{2} + \kappa \mu} \underset{T \rightarrow \infty}{\longrightarrow} 0, && \cfrac{\gamma_T}{\sqrt{T}} T^{\frac{1}{2}[(1+\mu)(1-c) -1]} \underset{T \rightarrow \infty}{\longrightarrow} \infty, \\
\cfrac{\lambda_T}{\sqrt{T}} T^{\kappa \eta } \underset{T \rightarrow \infty}{\longrightarrow} 0, && \cfrac{\lambda_T}{\sqrt{T}} T^{\frac{1}{2}[(1+\eta)(1-c) -1]} \underset{T \rightarrow \infty}{\longrightarrow} \infty,\\
\cfrac{\gamma_T}{\lambda^{1+\mu}_T} T^{(1+\mu)(1-\frac{c}{2}-\kappa \eta)-1} \underset{T \rightarrow \infty}{\longrightarrow} \infty. &&
\end{array}
\eeqw
\end{assumption}

\bre
The main condition is $d^5_T = o(T)$, which is the same as Fan and Peng (2004). This condition comes from the control for the third order derivative of the empirical criterion. Note that simple cases allow for a framework where $0 \leq c < 1$. Moreover, these asymptotic behaviors are closely related to condition (A5) of Zou and Zhang (2009). In section \ref{Sim}, we provide further details about the calibration of the adaptive weights and $\kappa$.
\ere

\begin{assumption}\label{AN}
Let $\Fc^T_{t} = \sigma(X_{T,s},s \leq t)$ with $X_{T,t} = \sqrt{T} Q_T \Vb^{-1/2}_{T,\Ac \Ac} \Hb^{-1}_{T,\Ac \Ac} \dot{\Gb}_T l_t(\theta_0)_{\Ac}$, $(Q_T)$ is a sequence of $r \times \text{card}(\Ac)$ matrices such that $Q_T \times Q'_T \overset{\Pb}{\rightarrow} \Cb$, for some $r \times r$ nonnegative symmetric matrix $\Cb$, $\Vb_{T,\Ac \Ac} = (\Hb^{-1}_T \Mb_T \Hb^{-1}_T)_{\Ac \Ac}$ and $\dot{\Gb}_T l_t(\theta_0)_{\Ac} = \frac{1}{T} \nabla_{\Ac} l(\eps_t;\theta_0)$. Then $X_{T,t}$ is a martingale difference and we have
\beqw
\Eb[  \underset{i,j = 1,\cdots,d_T}{\sup} \Eb[\{\partial_{\theta_i} l(\eps_t;\theta_0) \partial_{\theta_j} l(\eps_t;\theta_0)\}^2 | \Fc^T_{t-1} ] \lambda_{\max,t-1}(\Hb^T_{t-1}) ] \leq \bar{B}  < \infty,
\eeqw
with
\beqw
\Hb^T_{t-1} := \Eb[\nabla l(\eps_t;\theta_0) \nabla' l(\eps_t;\theta_0) | \Fc^T_{t-1}] \leq \lambda_{\max}(\Hb^T_{t-1}) < \infty.
\eeqw
\end{assumption}

\begin{theorem} \label{oracle_double}
Under assumptions \ref{H1}-\ref{H3}, and assumptions \ref{H6}-\ref{AN}, the sequence of adaptive estimator $\hat{\theta}$ solving (\ref{DA_SGL}) satisfies
\beqw
\begin{array}{llll}
\underset{T \rightarrow \infty}{\lim} \Pb(\hat{\Ac} = \Ac) = 1, \; \text{and}& & \\
\sqrt{T} Q_T \Vb^{-1/2}_{T,\Ac \Ac} (\hat{\theta}_{\Ac} - \theta_{0,\Ac}) \overset{d}{\rightarrow} \Nc(0,\Cb), & &
\end{array}
\eeqw
where $(Q_T)$ is a sequence of $r \times \text{card}(\Ac)$ matrices such that $Q_T \times Q'_T \overset{\Pb}{\rightarrow} \Cb$, for some $r \times r$ nonnegative symmetric matrix $\Cb$ and $\Vb_{T,\Ac \Ac} = (\Hb^{-1}_T \Mb_T \Hb^{-1}_T)_{\Ac \Ac}$.
\end{theorem}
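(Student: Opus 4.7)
The plan is to combine the consistency rate of Theorem~\ref{bound_prob_double} with the KKT optimality conditions of (\ref{DA_SGL}) and Shiryaev's martingale CLT (Theorem~\ref{Shishi}), in the spirit of Theorem~\ref{oracle1} but with the growing dimension $d_T$ handled as in Fan and Peng (2004) and Zou and Zhang (2009). I would split the argument into (a) selection consistency $\Pb(\hat\Ac=\Ac)\to 1$ and (b) a Bahadur representation of $\sqrt{T}(\hat\theta_{\Ac}-\theta_{0,\Ac})$ on the event $\{\hat\Ac=\Ac\}$, followed by a CLT after pre-multiplication by $Q_T\Vb_{T,\Ac\Ac}^{-1/2}$.

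For (a), working on the rate event $\|\hat\theta-\theta_0\|_2=O_p((d_T/T)^{1/2})$ given by Theorem~\ref{bound_prob_double}, I would use the subdifferential characterization of the optimum and argue by union bound over the two types of false inclusion: $\{\exists k\notin\Sc:\ \hat\theta^{(k)}\neq\mathbf{0}\}$ and $\{\exists k\in\Sc,\ i\in\Ac_k^c:\ \hat\theta^{(k)}_i\neq 0\}$. A Taylor expansion of the score at $\theta_0$ together with Assumptions~\ref{H6}--\ref{H10} keeps $\sqrt{T}\dot{\Gb}_T l(\hat\theta)$ polynomially bounded in $d_T$. Since $\tilde\theta$ vanishes at rate $(d_T/T)^{1/2}$ on the inactive set, the adaptive weights $\alpha^{(k)}_{T,i}$ (for $i\in\Ac_k^c$) and $\xi_{T,l}$ (for $l\notin\Sc$) diverge, and the two rates $(\lambda_T/\sqrt{T})T^{[(1+\eta)(1-c)-1]/2}\to\infty$, $(\gamma_T/\sqrt{T})T^{[(1+\mu)(1-c)-1]/2}\to\infty$ of Assumption~\ref{H13} are calibrated precisely so that the penalty magnitude dominates the score, making each KKT equality impossible with probability tending to one and closing the union bound.

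For (b), on $\{\hat\Ac=\Ac\}$ the KKT system collapses to a smooth equation restricted to the active coordinates. A third-order Taylor expansion of the score at $\theta_0$, controlled via Assumption~\ref{H10} and the rate of Theorem~\ref{bound_prob_double} exactly as in the proof of Theorem~\ref{bound_prob_double_init}, together with the ergodic convergence $\ddot{\Gb}_T l(\theta_0)_{\Ac\Ac}\to\Hb_{T,\Ac\Ac}$, produces the Bahadur-type representation
\beqw
\sqrt{T}(\hat\theta_{\Ac}-\theta_{0,\Ac}) = -\Hb_{T,\Ac\Ac}^{-1}\sqrt{T}\dot{\Gb}_T l(\theta_0)_{\Ac} + o_p(1),
\eeqw
where the penalty-induced bias is absorbed in $o_p(1)$ by Assumption~\ref{H12} (which forces the active adaptive weights to be $O_p(1)$ via the lower signal bound $\beta$) combined with the vanishing of $\lambda_T/\sqrt{T}$ and $\gamma_T/\sqrt{T}$ under Assumption~\ref{H13}. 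Pre-multiplying by $Q_T\Vb_{T,\Ac\Ac}^{-1/2}$ reduces the goal to the weak convergence $\sum_{t=1}^T X_{T,t}\overset{d}{\longrightarrow}\Nc(0,\Cb)$ for the martingale-difference array $X_{T,t}$ of Assumption~\ref{AN}. Its summed conditional variance equals $Q_T Q_T'\to\Cb$ by construction of $\Vb_{T,\Ac\Ac}$, and Assumption~\ref{AN} supplies the uniform conditional $L^2$-bound needed to verify Shiryaev's Lindeberg condition via a Chebyshev / Cauchy--Schwarz argument; Theorem~\ref{Shishi} then concludes.

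The main obstacle is the joint control of the penalty-induced bias in step (b) when $\text{card}(\Ac)\asymp d_T$: a naive Cauchy--Schwarz aggregation of the $l^1$ and $l^1/l^2$ subgradient contributions on $\Ac$ picks up an extra $\sqrt{d_T}$ factor which must be absorbed by the precise calibration of $\lambda_T,\gamma_T,\kappa$ in Assumption~\ref{H13}, in concert with the uniform signal lower bound of Assumption~\ref{H12}. A secondary delicate step is verifying Shiryaev's Lindeberg condition for a martingale array whose target dimension $\text{card}(\Ac)$ diverges; this is precisely why Assumption~\ref{AN} is stated in terms of a uniform conditional second-moment bound on the score cross-products together with an eigenvalue control on $\Hb^T_{t-1}$.
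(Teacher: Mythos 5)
Your plan follows the paper's own proof essentially step for step: selection consistency via the KKT subgradient conditions and the divergence of the adaptive weights on the inactive set under Assumption \ref{H13} (the paper implements this with threshold events such as $\|\tilde{\tilde{\theta}}^{(k)}\|_2 \le (\gamma_T/T)^{1/(1+\mu)}$ and Markov bounds summed over the inactive indices to close the union bound), followed by the Taylor/Bahadur representation on the selection event and Shiryaev's martingale CLT with the Lindeberg condition checked by Cauchy--Schwarz under Assumption \ref{AN}. The only point you leave implicit is the no-false-exclusion half of $\{\hat{\Ac}=\Ac\}$, which the paper proves separately as $\Pb(\|\hat{\theta}_{\Ac}-\theta_{0,\Ac}\|_2>\beta)\rightarrow 0$ and which indeed follows from the rate event you condition on together with the signal floor $\beta$ of Assumption \ref{H12}.
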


\begin{proof}

Model selection consistency consists of proving that the probability of the event $\{\hat{\Ac} = \Ac\}$ tends to one asymptotically. This event is
\beqw
\{\hat{\Ac} = \Ac\} = \{\forall k \in \Sc, \forall i \in \Ac_k, |\hat{\theta}^{(k)}_i| > 0 \} \cap \{\forall k = 1,\cdots,m, \forall i \in \Ac^c_k, \hat{\theta}^{(k)}_i = 0\}.
\eeqw
Hence we prove
\beq \label{model_select_double}
\Pb(\{\forall k \in \Sc, \forall i \in \Ac_k, |\hat{\theta}^{(k)}_i| > 0 \} \cap \{\forall k = 1,\cdots,m, \forall i \in \Ac^c_k, \hat{\theta}^{(k)}_i = 0\}) \underset{T \rightarrow \infty} {\longrightarrow} 1.
\eeq
Model selection consistency can be decomposed into two parts: recovering the active indices by estimating nonzero coefficients; discarding the inactive indices by shrinking to zero the related coefficients. Now (\ref{model_select_double}) can be proved by first showing that for any $T$, there exists $\beta$ such that $0 < \beta < \underset{i \in \Ac_k}{\min} \, \theta_{0,i,\Ac_k}$, with $k \in \Sc$ and
\beq \label{model_select_indiv}
\Pb(\|\hat{\theta}_{\Ac} - \theta_{0,\Ac}\|_2 < \beta) \underset{T \rightarrow \infty}{\longrightarrow} 1.
\eeq
The second part regarding nonactive indices can be proved as
\beq \label{model_select_zero}
\left\{\begin{array}{llll}
\Pb(\underset{k \in \Sc^c}{\cap}\{\|\hat{\zz}^{(k)}\|_2 < 1 \}) \underset{T \rightarrow \infty}{\longrightarrow} 1, & & \\
\Pb(\underset{k \in \Sc}{\cap}\underset{i \in \Ac^c_k}{\cap}\{ |\hat{\ww}^{(k)}_i| < 1\}) \underset{T \rightarrow \infty}{\longrightarrow} 1, & &
\end{array} \right.
\eeq
where $\hat{\zz}^{(k)}$ (resp. $\hat{\ww}^{(k)}$) is the subgradient of $\|\hat{\theta}^{(k)}\|_2$ (resp. $\|\hat{\theta}^{(k)}\|_1$) given in (\ref{opt1}). Hence (\ref{model_select_indiv}) and (\ref{model_select_zero}) prove (\ref{model_select_double}).

\mds

We first focus on (\ref{model_select_indiv}), which is equivalent to
\beqw
\Pb(\|\hat{\theta}_{\Ac_k} - \theta_{0,\Ac_k}\|_2 > \beta) \underset{T \rightarrow \infty}{\longrightarrow} 0.
\eeqw
By the Karush-Kuhn-Tucker optimality conditions, we have
\beqw
\dot{\Gb}_T l(\hat{\theta})_{\Ac} + \lambda_T T^{-1} \alpha_{T,\Ac} \odot \text{sgn}(\hat{\theta}_{\Ac}) + \gamma_T T^{-1} \varsigma_T = 0,
\eeqw
where $\varsigma_T = \text{vec}(\xi_{T,k} \cfrac{\hat{\theta}_{\Ac_k}}{\|\hat{\theta}_{\Ac_k}\|_2}, k \in \Sc)$. We denote by $\alpha_{T,\Ac_k} = (\alpha_{T,i},i \in \Ac_k)$, a vector of size $\Rb^{\cc_{\Ac_k}}$. By a Taylor expansion of the gradient component around $\theta_{0,\Ac}$, we have
\beqw
\begin{array}{llll}
\dot{\Gb}_T l(\theta_0)_{\Ac} + \Hb_{T,\Ac \Ac} (\hat{\theta}_{\Ac} - \theta_{0,\Ac})  +  \Pc_{T}(\theta_0) (\hat{\theta}_{\Ac} - \theta_{0,\Ac}) + \cfrac{1}{2} \nabla'_{\Ac} \{(\hat{\theta}_{\Ac} - \theta_{0,\Ac})'\ddot{\Gb}_T l(\bar{\theta}) (\hat{\theta}_{\Ac} - \theta_{0,\Ac})\} & & \\
+ \lambda_T T^{-1} \alpha_{T,\Ac} \odot \text{sgn}(\hat{\theta}_{\Ac}) + \gamma_T T^{-1} \varsigma_T = 0 & & \\

\Leftrightarrow \hat{\theta}_{\Ac} = \theta_{0,\Ac} - \Hb^{-1}_{T,\Ac \Ac} (\dot{\Gb}_T l(\theta_0)_{\Ac} + \lambda_T T^{-1} \alpha_{T,\Ac} \odot \text{sgn}(\hat{\theta}_{\Ac}) + \gamma_T T^{-1} \varsigma_T&& \\
- \Hb^{-1}_{T,\Ac \Ac} \cfrac{1}{2} \nabla'_{\Ac} \{ (\hat{\theta}_{\Ac} - \theta_{0,\Ac})' \ddot{\Gb}_T l(\bar{\theta})_{\Ac \Ac} (\hat{\theta}_{\Ac} - \theta_{0,\Ac})\} - \Hb^{-1}_{T,\Ac \Ac} \Pc_{T}(\theta_0) (\hat{\theta}_{\Ac} - \theta_{0,\Ac}), &&
\end{array}
\eeqw
where $\|\bar{\theta}-\theta_0\|_2 \leq \|\hat{\theta}-\theta_0\|_2$, $\Pc_{T}(\theta_0) = \ddot{\Gb}_T l(\theta_0)_{\Ac \Ac} - \Hb_{T,\Ac \Ac}$ and $\Hb_{T,\Ac \Ac} = \Eb[\nabla^2_{\theta \theta'} l(\eps_t;\theta_0)]_{\Ac \Ac}$. Then using $\|\hat{\theta}_{\Ac} - \theta_{0,\Ac}\|_2 = O_p((\cfrac{d_T}{T})^{\frac{1}{2}})$, we obtain
\beqw
\begin{array}{llll}
\Pb(\|\hat{\theta}_{\Ac} - \theta_{0,\Ac}\|_2 > \beta) & \leq & \Pb( \| \Hb^{-1}_{T,\Ac \Ac} \dot{\Gb}_T l(\theta_0)_{\Ac}  \|_2 + \| \Hb^{-1}_{T,\Ac \Ac} \|_2 \| \lambda_T T^{-1} \alpha_{T,\Ac} \odot \text{sgn}(\hat{\theta}_{\Ac}) \|_2 \\
& + & \| \Hb^{-1}_{T,\Ac \Ac} \|_2 \|\gamma_T T^{-1} \varsigma_T \|_2 \\
& + & \|\Hb^{-1}_{T,\Ac \Ac} \|_2 \| \nabla'_{\Ac} \{(\hat{\theta}_{\Ac} - \theta_{0,\Ac})' \ddot{\Gb}_T l(\bar{\theta})_{\Ac \Ac}(\hat{\theta}_{\Ac} - \theta_{0,\Ac}) \}/2\|_2 \\
& + & \| \Hb^{-1}_{T,\Ac \Ac} \|_2 \|\Pc_{T}(\theta_0) (\hat{\theta}_{\Ac} - \theta_{0,\Ac}) \|_2 > \beta) \\
& \leq & \Pb( \lambda^{-1}_{\min}(\Hb_T) \| \dot{\Gb}_T l(\theta_0)_{\Ac}  \|_2 + \lambda^{-1}_{\min}(\Hb_T)\lambda_T T^{-1} \|\alpha_{T,\Ac} \|_2 \\
& + & \lambda^{-1}_{\min}(\Hb_T) \gamma_T T^{-1} \|\varsigma_T\|_2 + \lambda^{-1}_{\min}(\Hb_T) C^2_0 (d_T/2T) \|\nabla'_{\Ac} \{\ddot{\Gb}_T l(\bar{\theta})_{\Ac \Ac}\}\|_2 \\
& + & \lambda^{-1}_{\min}(\Hb_T) C_0 (d_T/T)^{1/2} \|\Pc_{T}(\theta_0)\|_2 > \beta) + \Pb(\|\hat{\theta}_{\Ac}-\theta_{0,\Ac}\|_2 > (d_T/T)^{1/2} C_0),
\end{array}
\eeqw
for $C_0 > 0$ large enough, and we used $\|\Hb^{-1}_T \xx \|_2 \leq \lambda^{-1}_{\min}(\Hb_T) \|\xx\|_2$ for any vector $\xx \in \Rb^{d_T}$.

\mds

Let us proceed element-by-element. We have by the Markov inequality
\beqw
\Pb(\lambda^{-1}_{\min}(\Hb_T) C_0 \sqrt{\frac{d_T}{T}} \|\Pc_{T}(\theta_0)\|_2 > \frac{\beta}{6}) \leq \cfrac{36 \lambda^{-2}_{\min}(\Hb_T) C^2_0 d_T}{T\beta^2}\Eb[\|\Pc_{T}(\theta_0)\|^2_2].
\eeqw
We have
\beqw
\Eb[\|\Pc_{T}\|^2_2] = \cfrac{1}{T^2}\overset{T}{\underset{t,t'=1}{\sum}} \underset{k,k' \in \Ac}{\sum} \underset{l,l'\in \Ac}{\sum} \Eb[\zeta_{kl,t} \zeta_{k'l',t'}],
\eeqw
where $\zeta_{kl,t} = \partial^2_{\theta_k \theta_l} l(\eps_t;\theta_0) - \Eb[\partial^2_{\theta_k \theta_l} l(\eps_t;\theta_0)]$. By assumption \ref{H9}, we obtain
\beqw
\Pb(\lambda^{-1}_{\min}(\Hb_T) C_0 \sqrt{\frac{d_T}{T}} \|\Pc_{T}(\theta_0)\|_2 > \frac{\beta}{6}) \leq \cfrac{36 \lambda^{-2}_{\min}(\Hb_T) C^2_0 }{\beta^2} \cfrac{d^3_T}{T^2}.
\eeqw

\mds

As for the third order term, by the Markov inequality
\beqw
\Pb(\frac{1}{2}\lambda^{-1}_{\min}(\Hb_T) C^2_0 \frac{d_T}{T} \|\nabla'_{\Ac} \{\ddot{\Gb}_T l(\bar{\theta})_{\Ac \Ac}\}\|_2 > \frac{\beta}{6}) \leq \cfrac{9 \lambda^{-2}_{\min}(\Hb_T) C^4_0 d^2_T}{T^2} \Eb[\|\nabla'_{\Ac} \{\ddot{\Gb}_T l(\bar{\theta})_{\Ac \Ac}\}\|^2_2].
\eeqw
We obtain
\beqw
\begin{array}{llll}
\Eb[\|\nabla'_{\Ac} \{\ddot{\Gb}_T l(\bar{\theta})_{\Ac \Ac}\}\|^2_2] & \leq & \cfrac{1}{T^2} \overset{T}{\underset{t,t'=1}{\sum}} \underset{k_1,k_2,k_3 \in \Ac}{\sum} \underset{l_1,l_2,l_3 \in \Ac}{\sum} \Eb[ |\partial^3_{\theta_{k_1} \theta_{k_2} \theta_{k_3}} l(\eps_t;\theta_0).\partial^3_{\theta_{l_1} \theta_{l_2} \theta_{l_3}} l(\eps_t;\theta_0) l(\eps_{t'};\theta_0) |] \\
& \leq & \cfrac{1}{T^2} d^3_T \overset{T}{\underset{t,t'=1}{\sum}} \Eb[\upsilon_t(C_0) \upsilon_{t'}(C_0)] = \eta(C_0) d^3_T,
\end{array}
\eeqw
by assumption \ref{H10}, where $\upsilon_t(C_0) = \underset{k_1 k_2 k_3}{\sup} \underset{\theta:\|\theta-\theta_0\|_2 \leq \sqrt{\frac{d_T}{T}} C_0}{\sup} |\partial^3_{\theta_{k_1} \theta_{k_2} \theta_{k_3}} l(\eps_t;\theta_0)|$. We deduce that
\beqw
\Pb(\frac{1}{2}\lambda^{-1}_{\min}(\Hb_T) C^2_0 \frac{d_T}{T} \|\nabla'_{\Ac} \{\ddot{\Gb}_T l(\bar{\theta})_{\Ac \Ac}\}\|_2 > \frac{\beta}{6}) \leq \cfrac{9 \lambda^{-2}_{\min}(\Hb_T) C^4_0}{4}  \cfrac{d^5_T}{T^2} \eta(C_0).
\eeqw

\mds

We now turn to the score quantity. By the Markov inequality and assumption \ref{H8}, we have
\beqw
\begin{array}{llll}
\Pb( \lambda^{-1}_{\min}(\Hb_T) \| \dot{\Gb}_T l(\theta_0)_{\Ac}  \|_2 > \beta/6) & \leq & \cfrac{\lambda^{-2}_{\min}(\Hb_T) 36}{\beta^2} \Eb[\| \dot{\Gb}_T l(\theta_0)_{\Ac}  \|_2] \\
& \leq & \cfrac{\lambda^{-2}_{\min}(\Hb_T) 36}{\beta^2} \cfrac{1}{T^2} \overset{T}{\underset{t,t'=1}{\sum}} \underset{k \in \Ac}{\sum} \Eb[\partial_{\theta_k} l(\eps_t;\theta_0) \partial_{\theta_k} l(\eps_{t'};\theta_0)] \\
& \leq & \cfrac{\lambda^{-2}_{\min}(\Hb_T) 36}{\beta^2} \cfrac{1}{T} \{ \cfrac{1}{T} \overset{T}{\underset{t,t'=1}{\sum}} \Psi(|t-t'|) \} d_T \\
& \leq & \cfrac{\lambda^{-2}_{\min}(\Hb_T) 36 K d_T}{T\beta^2},
\end{array}
\eeqw
with $K > 0$. Hence we deduce
\beqw
\begin{array}{llll}
\Pb(\|\hat{\theta}_{\Ac} - \theta_{0,\Ac}\|_2 > \beta) & \leq & \Pb(\lambda^{-1}_{\min}(\Hb_T)\lambda_T T^{-1} \|\alpha_{T,\Ac} \|_2 + \lambda^{-1}_{\min}(\Hb_T) \gamma_T T^{-1} \|\varsigma_T\|_2 > \beta/2) \\
& + &  \Pb(\|\hat{\theta}_{\Ac}-\theta_{0,\Ac}\|_2 > (d_T/T)^{1/2} C_0) + \Pb(\lambda^{-1}_{\min}(\Hb_T) C_0 (d_T/T)^{1/2} \|\Pc_{T}(\theta_0)\|_2 > \beta/6) \\
& + & \Pb(\lambda^{-1}_{\min}(\Hb_T) C^2_0 (d_T/2T) \|\nabla'_{\Ac} \{\ddot{\Gb}_T l(\bar{\theta})_{\Ac \Ac}\}\|_2 > \beta/6) \\
& + & \Pb( \lambda^{-1}_{\min}(\Hb_T) \| \dot{\Gb}_T l(\theta_0)_{\Ac}  \|_2 > \beta/6) \\
& \leq &  \cfrac{2 \lambda^{-1}_{\min}(\Hb_T) }{\beta} \{ \lambda_T T^{-1} d^{1/2}_T \Eb[\underset{k \in \Sc, i \in \Ac_k}{\max} \; \alpha_{T,\Ac_k,i} ] + \gamma_T T^{-1} \Eb[ \underset{k \in \Sc}{\max} \; \xi_{T,k} ] \} \\
& + & \cfrac{36 \lambda^{-2}_{\min}(\Hb_T)  K d_T}{T\beta^2} +\cfrac{9 \lambda^{-2}_{\min}(\Hb_T) C^4_0}{4}  \cfrac{d^5_T}{T^2} \eta(C_0) + \cfrac{36 \lambda^{-2}_{\min}(\Hb_T) C^2_0 }{\beta^2} \cfrac{d^3_T}{T^2} + \eps.
\end{array}
\eeqw
For $T$ and $C_0$ large enough, if $d^5_T = o(T)$, by assumption \ref{H12}, that is if
\beqw
\beta^{-1}T^{-1}\{\lambda_T d^{1/2}_T \Eb[\underset{k \in \Sc, i \in \Ac_k}{\max} \; \alpha_{T,\Ac_k,i} ] + \gamma_T \Eb[ \underset{k \in \Sc}{\max} \; \xi_{T,k} ] \} \underset{T \rightarrow \infty}{\longrightarrow} 0,
\eeqw
 then
\beqw
\Pb(\|\hat{\theta}_{\Ac} - \theta_{0,\Ac}\|_2 > \beta) \underset{T \rightarrow \infty}{\longrightarrow} 0.
\eeqw

\mds

We  now turn to the second step of model selection consistency. First we prove
\beq \label{inactive_group_select}
\Pb(\underset{k \in \Sc^c}{\cap}\{ \|\hat{\zz}^{(k)}\|_2 < 1\}) \underset{T \rightarrow \infty}{\longrightarrow} 1 \Leftrightarrow \Pb(\underset{k \in \Sc^c}{\cup}\{ \|\hat{\zz}^{(k)}\|_2 \geq 1\}) \underset{T \rightarrow \infty}{\longrightarrow} 0.
\eeq
This is equivalent to proving
\beqw
\Pb(\underset{k \in \Sc^c}{\cup}\{ \|\dot{\Gb}_T l(\hat{\theta})_{(k)} + \lambda_T T^{-1} \alpha^{(k)}_T \odot \hat{\ww}^{(k)}\|_2 \geq \gamma_T T^{-1} \xi_{T,k}\}) \underset{T \rightarrow \infty}{\longrightarrow} 0.
\eeqw
We have for $k \in \Sc^c$ that $\|\hat{\ww}^{(k)}\|_{\infty} \leq 1$, which implies by the optimality conditions of Karush-Kuhn-Tucker that
\beqw
\begin{array}{llll}
\Pb(\underset{k \in \Sc^c}{\cup}\{ \|\dot{\Gb}_T l(\hat{\theta})_{(k)} + \lambda_T T^{-1} \alpha^{(k)}_T \odot \hat{\ww}^{(k)}\|_2 \geq \gamma_T T^{-1} \xi_{T,k}\}) && \\
\leq \Pb(\underset{k \in \Sc^c}{\cup}\{ \|\dot{\Gb}_T l(\hat{\theta})_{(k)}\|_2 \geq \gamma_T T^{-1} \xi_{T,k} - \lambda_T T^{-1} \|\alpha^{(k)}_T\|_2\}).&&
\end{array}
\eeqw
By a Taylor expansion around $\theta_0$, let $\bar{\theta}$ such that $\|\bar{\theta}-\theta_0\| \leq \|\hat{\theta} - \theta_0\|$, we have
\beqw
\begin{array}{llll}
\Pb(\underset{k \in \Sc^c}{\cup} \{ \|\hat{\zz}^{(k)}\|_2 \geq 1 \}) & \leq & \Pb(\underset{k \in \Sc^c}{\cup}\{ \|\dot{\Gb}_T l(\theta_0)_{(k)}\|_2 \geq \gamma_T T^{-1} \xi_{T,k} - \lambda_T T^{-1} \|\alpha^{(k)}_T\|_2 \\
& - & \|\ddot{\Gb}_T l(\theta_0)_{(k)(k)}\|_2 \|\hat{\theta} - \theta_0\|_2 - \|\nabla'\{\ddot{\Gb}_T l(\bar{\theta})_{(k)(k)}\}_{(k)}\|_2 \|\hat{\theta}-\theta_0\|^2_2\}) \\
& \leq & \Pb(\underset{k \in \Sc^c}{\cup}\{ \|\dot{\Gb}_T l(\theta_0)_{(k)}\|_2 \geq \gamma_T T^{-1} \|\tilde{\tilde{\theta}}^{(k)}\|^{-\mu}_2 - \lambda_T T^{-1} d^{1/2}_T \underset{k \in \Sc^c,i\in \Gc_k}{\max} \, (|\tilde{\tilde{\theta}}^{(k)}_i|^{-\eta})  \\
& - & \|\ddot{\Gb}_T l(\theta_0)_{(k)(k)}\|_2 \|\hat{\theta} - \theta_0\|_2 - \|\nabla'\{\ddot{\Gb}_T l(\bar{\theta})_{(k)(k)}\}_{(k)}\|_2 \|\hat{\theta}-\theta_0\|^2_2\}),
\end{array}
\eeqw
where we used $\|\ddot{\Gb}_T l(\theta_0)_{(k)(k)} (\hat{\theta} - \theta_0)\|_2 \leq \|\ddot{\Gb}_T l(\bar{\theta})_{(k)(k)}\|_2 \|\hat{\theta}- \theta_0\|_2$ and $\|\ddot{\Gb}_T l(\theta_0)_{(k)(k)}\|_2=\|\ddot{\Gb}_T l(\theta_0)_{(k)(k)}\|_s$. Let $\eps > 0$, and $K_{\eps}$ strictly positive constants, we proved for $T$ large enough that
\beqw
\Pb(\|\hat{\theta}-\theta_0\|_2>K_{\eps}(d_T/T)^{1/2} ) < \eps/6.
\eeqw
We deduce that
\beqw
\begin{array}{llll}
\Pb(\underset{k \in \Sc^c}{\cup}\{ \|\hat{\zz}^{(k)}\|_2 \geq 1\}) & \leq & \Pb(\underset{k \in \Sc^c}{\cup}\{ \|\dot{\Gb}_T l(\theta_0)_{(k)}\|_2 \geq \gamma_T T^{-1} \|\tilde{\tilde{\theta}}^{(k)}\|^{-\mu}_2 - \lambda_T T^{-1} d^{1/2}_T \underset{k \in \Sc^c,i\in \Gc_k}{\max} \, (|\tilde{\tilde{\theta}}^{(k)}_i|^{-\eta})  \\
& - & \|\ddot{\Gb}_T l(\theta_0)_{(k)(k)}\|_2 (d_T/T)^{1/2} K_{\eps} - \|\nabla'\{\ddot{\Gb}_T l(\bar{\theta})_{(k)(k)}\}_{(k)}\|_2 (\cfrac{d_T}{T})^2 K^2_{\eps} \}) + \eps/6.
\end{array}
\eeqw
Let $M_{1,T} = (\cfrac{\gamma_T}{T})^{\frac{1}{1+\mu}}$, then we obtain
\beqw
\begin{array}{llll}
\Pb(\underset{k \in \Sc^c}{\cup}\{ \|\hat{\zz}^{(k)}\|_2 \geq 1\}) & \leq & \underset{k \in \Sc^c}{\sum}\{\Pb( \|\dot{\Gb}_T l(\theta_0)_{(k)}\|_2 \geq \gamma_T T^{-1} \|\tilde{\tilde{\theta}}^{(k)}\|^{-\mu}_2 - \lambda_T T^{-1} d^{1/2}_T \underset{k \in \Sc^c,i\in \Gc_k}{\max} \, (|\tilde{\tilde{\theta}}^{(k)}_i|^{-\eta})  \\
& - & \|\ddot{\Gb}_T l(\theta_0)_{(k)(k)}\|_2 (d_T/T)^{\frac{1}{2}} K_{\eps} - \|\nabla'\{\ddot{\Gb}_T l(\bar{\theta})_{(k)(k)}\}_{(k)}\|_2 (\cfrac{d_T}{T})^2 K^2_{\eps}, \|\tilde{\tilde{\theta}}^{(k)}\|_2 \leq M_{1,T}) \\
& + & \Pb(\|\tilde{\tilde{\theta}}^{(k)}\|_2 > M_{1,T})\} + \eps/6.
\end{array}
\eeqw
Consequently, we have the relationship
\beqw
\begin{array}{llll}
\Pb(\underset{k \in \Sc^c}{\cup}\{ \|\hat{\zz}^{(k)}\|_2 \geq 1\})  & \leq & \underset{k \in \Sc^c}{\sum}\{ \Pb(\|\dot{\Gb}_T l(\theta_0)_{(k)}\|_2 \geq \gamma_T T^{-1} M^{-\mu}_{1,T}/4) \\
& + & \Pb(\lambda_T T^{-1} d^{1/2}_T \underset{k \in \Sc^c,i\in \Gc_k}{\max} \, (|\tilde{\tilde{\theta}}^{(k)}_i|^{-\eta}) > \gamma_T T^{-1} M^{-\mu}_{1,T}/4) \\
& + & \Pb(\|\ddot{\Gb}_T l(\theta_0)_{(k)(k)}\|_2 (d_T/T)^{1/2} K_{\eps} > \gamma_T T^{-1} M^{-\mu}_{1,T}/4 ) \\
& + & \Pb(\|\nabla'\{\ddot{\Gb}_T l(\bar{\theta})_{(k)(k)}\}_{(k)}\|_2 (\cfrac{d_T}{T})^2 K^2_{\eps} > \gamma_T T^{-1} M^{-\mu}_{1,T}/4) \\
& + & \Pb(\|\tilde{\tilde{\theta}}^{(k)}\|_2 > M_{1,T}) \} + \eps/6 := \overset{5}{\underset{i=1}{\sum}} T_i + \eps/6.
\end{array}
\eeqw
We then focus on each $T_i$. We have by the Markov inequality
\beqw
\begin{array}{llll}
T_1 := \underset{k \in \Sc^c}{\sum} \Pb(\|\dot{\Gb}_T l(\theta_0)_{(k)}\|_2 > \gamma_T T^{-1} M^{-\mu}_{1,T}/4) & \leq & \underset{k \in \Sc^c}{\sum} \cfrac{16  \Eb[\|\dot{\Gb}_T l(\theta_0)_{(k)}\|^2_2]}{\{\gamma_T T^{-1}M^{-\mu}_{1,T}\}^2} \\
& \leq & \cfrac{16 \Eb[\|\dot{\Gb}_T l(\theta_0)\|^2_2]}{\{\gamma_T T^{-1}M^{-\mu}_{1,T}\}^2} \\
& \leq & \cfrac{16 d_T}{T \{\gamma_T T^{-1}M^{-\mu}_{1,T}\}^2} \\
& = & O((\cfrac{\gamma_T}{\sqrt{T}} T^{\frac{1}{2}[(1+\mu)(1-c)-1]})^{-\frac{2}{1+\mu}}).
\end{array}
\eeqw

Furthermore, using $|\tilde{\tilde{\theta}}^{(k)}_i|^{-\eta} \leq T^{\kappa \eta}$, we have for $T_2$ that
\beq \label{T_2}
\begin{array}{llll}
\Pb(\lambda_T T^{-1} d^{1/2}_T \underset{k \in \Sc^c,i\in \Gc_k}{\max} \, (|\tilde{\tilde{\theta}}^{(k)}_i|^{-\eta}) > \gamma_T T^{-1} M^{-\mu}_{1,T}/4) & \leq & \Pb(\lambda_T T^{-1} d^{1/2}_T T^{\kappa \eta} > \gamma_T T^{-1} M^{-\mu}_{1,T}/4) \\
& \leq & \Pb(\gamma_T T^{-1} M^{-\mu}_{1,T}/4 \{ 1 - 4 \lambda_T \gamma^{-1}_T d^{1/2}_T M^{\mu}_{1,T} T^{\kappa \eta} \} < 0).
\end{array}
\eeq
The quantity of interest is $\gamma_T \lambda^{-1}_T  d^{-1/2}_T M^{-\mu}_{1,T} T^{-\kappa \eta}$ that has to converge to $\infty$ such that (\ref{T_2}) converge to zero for $T$ sufficiently large enough. We have
\beqw
\gamma_T \lambda^{-1}_T  d^{-1/2}_T M^{-\mu}_{1,T} T^{-\kappa \eta} \rightarrow \infty \Leftrightarrow \cfrac{\gamma_T}{\lambda^{1+\mu}_T} d^{-\frac{1+\mu}{2}}_T T^{-\kappa \eta (1+\mu) + \mu} \rightarrow \infty.
\eeqw

As for $T_3$, we have by the Markov inequality
\beqw
\begin{array}{llll}
T_3 :=\underset{k \in \Sc^c}{\sum} \Pb((\|\Hb_{T,(k)(k)}\|_2 + \|\Rc_{T,(k)}(\theta_0)\|_2) (d_T/T)^{1/2} K_{\eps} > \gamma_T T^{-1} M^{-\mu}_{1,T}/4 ) && \\
\leq \underset{k \in \Sc^c}{\sum} \Pb((\|\Rc_{T,(k)}(\theta_0)\|_2 (d_T/T)^{1/2} K_{\eps} > \gamma_T T^{-1} M^{-\mu}_{1,T}/4 - \|\Hb_{T,(k),(k)}\|_2 (d_T/T)^{1/2} K_{\eps}) && \\

\leq \underset{k \in \Sc^c}{\sum} \{\Pb((\|\Rc_{T,(k)}(\theta_0)\|_2 (d_T/T)^{1/2} K_{\eps} > \gamma_T T^{-1} M^{-\mu}_{1,T}/8) + \Pb(\|\Hb_{T,(k),(k)}\|_2 (d_T/T)^{1/2} K_{\eps} > \gamma_T T^{-1} M^{-\mu}_{1,T}/8) \}&& \\
\leq  \underset{k \in \Sc^c}{\sum} \{ \cfrac{64 K^2_{\eps} d_T \Eb[\|\Rc_{T,(k)}(\theta_0)\|^2_2]}{T \gamma^2_T T^{-2} M^{-2\mu}_{1,T} } + \cfrac{64 K^2_{\eps} d_T \|\Hb_{T,(k)(k)}\|^2_2 }{T \gamma^2_T T^{-2} M^{-2\mu}_{1,T} } \} && \\
\leq \cfrac{64 K^2_{\eps}d_T\|\Hb_T\|^2_2}{\gamma^2_T T^{-1} M^{-2\mu}_{1,T}} + \cfrac{64 K^2_{\eps} \Eb[\|\Rc_T(\theta_0)\|^2_2]}{\gamma^2_T M^{-2\mu}_{1,T}}&& \\
\leq \cfrac{64 K^2_{\eps}d_T \lambda^2_{\max}(\Hb_T)}{\gamma^2_T T^{-1} M^{-2\mu}_{1,T}} + \cfrac{64 K^2_{\eps} d^3_T }{\gamma^2_T M^{-2\mu}_{1,T}} && \\
\leq \cfrac{64 K^2_{\eps}\lambda^2_{\max}(\Hb_T)}{\{\gamma_T T^{-1/2} d^{-1/2}_T M^{-\mu}_{1,T}\}^2} + \cfrac{64 K^2_{\eps}}{\{\gamma_T d^{-3/2}_T M^{-\mu}_{1,T}\}^2} && \\
= O((\cfrac{\gamma_T}{\sqrt{T}} T^{\frac{1}{2}[(1+\mu)(1-c)-1]})^{-\frac{2}{1+\mu}}) + O((\cfrac{\gamma_T}{\sqrt{T}} T^{\frac{1}{2}[(1+\mu)(2-3c)-1]})^{-\frac{2}{1+\mu}}).&&
\end{array}
\eeqw

We obtain for $T_4$ by the Markov inequality
\beqw
\begin{array}{llll}
T_4:= \underset{k \in \Sc^c}{\sum} \Pb(\|\nabla'\{\ddot{\Gb}_T l(\bar{\theta})_{(k)(k)}\}_{(k)}\|_2 (\cfrac{d_T}{T})^2 K^2_{\eps} > \gamma_T T^{-1} M^{-\mu}_{1,T}/4) && \\
\leq \underset{k \in \Sc^c}{\sum} \cfrac{16 K^4_{\eps} d^2_T \Eb[\|\nabla'\{\ddot{\Gb}_T l(\bar{\theta})_{(k)(k)}\}_{(k)}\|^2_2]}{T^2 \gamma_T T^{-2} M^{-2\mu}_{1,T}} && \\
\leq \cfrac{16 K^4_{\eps} d^5_T \Eb[\|\nabla'\{\ddot{\Gb}_T l(\bar{\theta})\}\|^2_2]}{\gamma^2_T M^{-2\mu}_{1,T}} && \\
\leq \cfrac{16 K^4_{\eps} d^5_T \eta(K_{\eps})}{\gamma^2_T  M^{-2\mu}_{1,T}} = \cfrac{16 K^4_{\eps} \eta(K_{\eps}) }{\{\gamma_T d^{-5/2}_T M^{-\mu}_{1,T}\}^2} = O((\cfrac{\gamma_T}{\sqrt{T}} T^{\frac{1}{2}[(1+\mu)(2-5c)-1]})^{-\frac{2}{1+\mu}}). &&
\end{array}
\eeqw
Finally, we have for $T_5$ that
\beqw
\begin{array}{llll}
T_5:=\underset{k \in \Sc^c}{\sum} \Pb(\|\tilde{\tilde{\theta}}^{(k)}\|_2 > M_{1,T}) & \leq & \underset{k \in \Sc^c}{\sum} \cfrac{\Eb[\|\tilde{\tilde{\theta}}^{(k)}\|^2_2]}{M^2_{1,T}} \\
& \leq & \cfrac{\Eb[\|\tilde{\tilde{\theta}} - \theta_0\|^2_2]}{M^2_{1,T}} \\
& = & O((\cfrac{\gamma_T}{\sqrt{T}} T^{\frac{1}{2}[(1+\mu)(1-c) -1]})^{-\frac{2}{1+\mu}}).
\end{array}
\eeqw
Hence we obtain from these relationships and using assumption \ref{H13}
\beqw
\begin{array}{llll}
\cfrac{\gamma_T}{\lambda^{1+\mu}_T} T^{\mu-(\frac{c}{2}+\kappa \eta)(1+\mu)} \underset{T \rightarrow \infty}{\longrightarrow} \infty, && \\
\cfrac{\gamma_T}{\sqrt{T}} T^{\frac{1}{2}[(1+\mu)(1-c) -1]} \underset{T \rightarrow \infty}{\longrightarrow} \infty,&&\\
\end{array}
\eeqw
such that the latter implies
\beqw
\begin{array}{llll}
\cfrac{\gamma_T}{\sqrt{T}} T^{\frac{1}{2}[(1+\mu)(2-3c) -1]} \underset{T \rightarrow \infty}{\longrightarrow} \infty,&&\\
\cfrac{\gamma_T}{\sqrt{T}} T^{\frac{1}{2}[(1+\mu)(2-5c)-1]} \underset{T \rightarrow \infty}{\longrightarrow} \infty.&&\\
\end{array}
\eeqw
Consequently each $T_i$ converges to zero for $T$ large enough. Hence
\beqw
\Pb(\underset{k \in \Sc^c}{\cup} \{\|\hat{\zz}^{(k)}\|_2 \geq 1\} ) \leq \overset{5}{\underset{i=1}{\sum}} T_i + \eps/6 \underset{T \rightarrow \infty}{\longrightarrow} \eps.
\eeqw
For $\eps \rightarrow 0$, we prove $\Pb(\underset{k \in \Sc^c}{\cup} \{\|\hat{\zz}^{(k)}\|_2 \geq 1\} ) \rightarrow 0$ for $T$ large enough.

\mds

As for the second part of the model selection procedure, we prove that
\beq \label{inactive_within}
\Pb(\underset{k \in \Sc}{\cap} \underset{i \in \Ac^c_k}{\cap} \{|\hat{\ww}^{(k)}_i| < 1 \}) \underset{T \rightarrow \infty}{\longrightarrow} 1 \Leftrightarrow  \Pb(\underset{k \in \Sc}{\cup} \underset{i \in \Ac^c_k}{\cup}\{ |\hat{\ww}^{(k)}_i| \geq 1\}) \underset{T \rightarrow \infty}{\longrightarrow} 0.
\eeq
By the optimality conditions, we have
\beqw
\Pb(\underset{k \in \Sc}{\cup} \underset{i \in \Ac^c_k}{\cup}\{|\hat{\ww}^{(k)}_i| \geq 1\}) = \Pb(\underset{k \in \Sc}{\cup} \underset{i \in \Ac^c_k}{\cup}\{|\dot{\Gb}_T l(\hat{\theta})_{(k),i}| \geq \lambda_T T^{-1} \alpha^{(k)}_{T,i}\}).
\eeqw
Then by a Taylor expansion around $\theta_0$, with $\bar{\theta}$ between $\hat{\theta}$ and $\theta_0$, we have
\beqw
\begin{array}{llll}
 \Pb(\underset{k \in \Sc}{\cup} \underset{i \in \Ac^c_k}{\cup}\{|\hat{\ww}^{(k)}_i| \geq 1\}) & = & \Pb(\underset{k \in \Sc}{\cup} \underset{i \in \Ac^c_k}{\cup}\{ |\dot{\Gb}_T l(\theta_0)_{(k),i} + [\underset{j}{\sum} \partial^2_{ij} \Gb_T l(\theta_0) (\hat{\theta}_j - \theta_{0,j})]_i \\
& + & [\underset{j,k}{\sum} T^{-1}\overset{T}{\underset{t=1}{\sum}}\partial^3_{ijk} l(\eps_t;\bar{\theta})(\hat{\theta}_j - \theta_{0,j})^2/2]_i| \geq \lambda_T T^{-1} \alpha^{(k)}_{T,i}\}) \\
& \leq & \Pb(\underset{k \in \Sc}{\cup} \underset{i \in \Ac^c_k}{\cup}\{|\dot{\Gb}_T l(\theta_0)_{(k),i} | \geq \lambda_T T^{-1} \alpha^{(k)}_{T,i} - [\underset{j}{\sum} \partial^2_{ij} \Gb_T l(\theta_0) (\hat{\theta}_j - \theta_{0,j})]_i \\
& - & [\underset{j,k}{\sum} T^{-1}\overset{T}{\underset{t=1}{\sum}}\partial^3_{ijk} l(\eps_t;\bar{\theta})(\hat{\theta}_j - \theta_{0,j})^2/2]_i|\}).
\end{array}
\eeqw
Let $M_{2,T} = (\cfrac{\lambda_T}{T})^{\frac{1}{1+\eta}}$. Then using  $\|\hat{\theta}-\theta_0\|_2 = O_p((\cfrac{d_T}{T})^{\frac{1}{2}})$ and the Cauchy-Schwartz inequality, we obtain
\beqw
\begin{array}{llll}
\Pb(\underset{k \in \Sc}{\cup} \underset{i \in \Ac^c_k}{\cup}\{ |\hat{\ww}^{(k)}_i| \geq 1\}) & \leq & \underset{k \in \Sc}{\sum}\underset{i \in \Ac^c_k}{\sum}\{\Pb(|\dot{\Gb}_T l(\theta_0)_{(k),i} | \geq \lambda_T T^{-1} \alpha^{(k)}_{T,i} - [\underset{j}{\sum} \partial^2_{ij} \Gb_T l(\theta_0) (\hat{\theta}_j - \theta_{0,j})]_i \\
& - & [\underset{j,k}{\sum} T^{-1}\overset{T}{\underset{t=1}{\sum}}\partial^3_{ijk} l(\eps_t;\bar{\theta})(\hat{\theta}_j - \theta_{0,j})^2/2]_i|,|\tilde{\tilde{\theta}}^{(k)}_i| \leq M_{2,T}) + \Pb(|\tilde{\tilde{\theta}}^{(k)}_i| > M_{2,T})\} \\
& \leq & \underset{k \in \Sc}{\sum}\underset{i \in \Ac^c_k}{\sum}\{ \Pb(|\dot{\Gb}_T l(\theta_0)_{(k),i} | \geq \lambda_T T^{-1} M^{-\eta}_{2,T}- \{\underset{j}{\sum} (\partial^2_{ij} \Gb_T l(\theta_0))^2 \}^{1/2} K_{\eps} (d_T/T)^{1/2} \\
& - & \{\underset{j,k,l,m}{\sum} T^{-2}\overset{T}{\underset{t,t'=1}{\sum}}\partial^3_{ijk} l(\eps_t;\bar{\theta}) \partial^3_{ilm} l(\eps_{t'};\bar{\theta}) \}^{1/2}K^2_{\eps} (d_T/T)) \\
& + & \Pb(|\tilde{\tilde{\theta}}^{(k)}_i| > M_{2,T}) \}+ \eps/5 \\
& \leq & \underset{k \in \Sc}{\sum} \underset{i \in \Ac^c_k}{\sum}\{ \Pb(|\dot{\Gb}_T l(\theta_0)_{(k),i} | \geq \lambda_T T^{-1} M^{-\eta}_{2,T}/3)\\
& + & \Pb(\{\underset{j}{\sum} (\partial^2_{ij} \Gb_T l(\theta_0))^2 \}^{1/2} K_{\eps} (d_T/T)^{1/2} > \lambda_T T^{-1} M^{-\eta}_{2,T}/3) \\
& + & \Pb(\{\underset{j,k,l,m}{\sum} T^{-2}\overset{T}{\underset{t,t'=1}{\sum}}\partial^3_{ijk} l(\eps_t;\bar{\theta}) \partial^3_{ilm} l(\eps_{t'};\bar{\theta}) \}^{1/2}K^2_{\eps} (d_T/T) > \lambda_T T^{-1} M^{-\eta}_{2,T} /3) \\
& + & \Pb(|\tilde{\tilde{\theta}}^{(k)}_i| > M_{2,T}) \} + \eps/5 := \overset{4}{\underset{i=1}{\sum}} T_i + \eps/5.
\end{array}
\eeqw
We proceed as for inactive groups. For $T_1$, we have by the Markov inequality
\beqw
\begin{array}{llll}
T_1:=\underset{k \in \Sc}{\sum} \underset{i \in \Ac^c_k}{\sum} \Pb(|\dot{\Gb}_T l(\theta_0)_{(k),i} | \geq \lambda_T T^{-1} M^{-\eta}_{2,T}/3) & \leq & \underset{k \in \Sc}{\sum} \underset{i \in \Ac^c_k}{\sum} \cfrac{9 \Eb[|\dot{\Gb}_T l(\theta_0)_{(k),i} |^2]}{\{\lambda_T T^{-1} M^{-\eta}_{2,T}\}^2} \\
& \leq & \cfrac{9 \Eb[\|\dot{\Gb}_T l(\theta_0) \|^2_2]}{\{\lambda_T T^{-1} M^{-\eta}_{2,T}\}^2} \\
& = & O((\cfrac{\lambda_T}{\sqrt{T}} T^{\frac{1}{2}[(1+\eta)(1-c)-1]})^{-\frac{2}{1+\eta}}).
\end{array}
\eeqw
As for $T_2$, we have
\beqw
\begin{array}{llll}
T_2:=\underset{k \in \Sc}{\sum} \underset{i \in \Ac^c_k}{\sum} \Pb(\{\underset{j}{\sum} (\partial^2_{ij} \Gb_T l(\theta_0))^2 \}^{1/2} K_{\eps} (d_T/T)^{1/2} > \lambda_T T^{-1} M^{-\eta}_{2,T}/3) &&\\
=\underset{k \in \Sc}{\sum} \underset{i \in \Ac^c_k}{\sum} \Pb((\{\underset{j}{\sum} \Pc_{T,(k),j}(\theta_0) \}^{1/2} +  \{\underset{j}{\sum} \Hb^2_{T,(k),j} \}^{1/2})K_{\eps} (d_T/T)^{1/2} > \lambda_T T^{-1} M^{-\eta}_{2,T}/3) &&\\
\leq \underset{k \in \Sc}{\sum} \underset{i \in \Ac^c_k}{\sum} \underset{j}{\sum}\{ \cfrac{36 d_T \Eb[ \Pc^2_{T,(k),j}(\theta_0) ]}{T\{\lambda_T T^{-1} M^{-\eta}_{2,T}\}^2}\} + \cfrac{36 d_T \|\Hb_T\|^2_2}{T\{\lambda_T T^{-1} M^{-\eta}_{2,T}\}^2}&&\\
\leq\cfrac{36 d_T\lambda^2_{\max}(\Hb_T)}{T\{\lambda_T T^{-1} M^{-\eta}_{2,T}\}^2} + \cfrac{36 d_T \Eb[\|\Pc_T(\theta_0)\|^2_2]}{T\{\lambda_T T^{-1} M^{-\eta}_{2,T}\}^2} && \\
= O((\cfrac{\lambda_T}{\sqrt{T}} T^{\frac{1}{2}[(1+\eta)(1-c)-1]})^{-\frac{2}{1+\eta}}) + O((\cfrac{\lambda_T}{\sqrt{T}} T^{\frac{1}{2}[(1+\eta)(2-3c)-1]})^{-\frac{2}{1+\eta}}).&&
\end{array}
\eeqw
Furthermore, for the third order term in $T_3$, we have
\beqw
\begin{array}{llll}
T_3:=\underset{k \in \Sc}{\sum} \underset{i \in \Ac^c_k}{\sum} \Pb(\{\underset{j,k,l,m}{\sum} T^{-2}\overset{T}{\underset{t,t'=1}{\sum}}\partial^3_{ijk} l(\eps_t;\bar{\theta}) \partial^3_{ilm} l(\eps_{t'};\bar{\theta}) \}^{1/2}K^2_{\eps} (d_T/T) > \lambda_T T^{-1} M^{-\eta}_{2,T} /3) & & \\
\leq \cfrac{9 d^2_T\Eb[\|\nabla'\{\ddot{\Gb}_T l(\bar{\theta}\}\|^2_2]}{T^2\{\lambda_T T^{-1} M^{-\eta}_{2,T}\}^2} = O((\cfrac{\lambda_T}{\sqrt{T}} T^{\frac{1}{2}[(1+\eta)(2-5c)-1]})^{-\frac{2}{1+\eta}}). &&
\end{array}
\eeqw
Finally, we have for $T_4$ that
\beqw
\begin{array}{llll}
T_4:=\underset{i \in \Ac^c_k}{\sum} \Pb(|\tilde{\tilde{\theta}}^{(k)}_i| > M_{2,T}) & \leq & \underset{k \in \Sc}{\sum} \underset{i \in \Ac^c_k}{\sum} \cfrac{\Eb[|\tilde{\tilde{\theta}}^{(k)}_i|^2]}{M^2_{2,T}} \\
& \leq &  \cfrac{\Eb[\|\tilde{\tilde{\theta}} - \theta_0\|^2_2]}{M^2_{2,T}} = O((\cfrac{\lambda_T}{\sqrt{T}} T^{\frac{1}{2}[(1+\eta)(1-c) -1]})^{-\frac{2}{1+\eta}}).
\end{array}
\eeqw
We have from these relationships and by assumption \ref{H13}, $\cfrac{\lambda_T}{\sqrt{T}} T^{\frac{1}{2}[(1+\eta)(1-c) -1]} \underset{T \rightarrow \infty}{\longrightarrow} \infty$ implies
\beqw
\begin{array}{llll}
\cfrac{\lambda_T}{\sqrt{T}} T^{\frac{1}{2}[(1+\eta)(2-3c)-1]} \underset{T \rightarrow \infty}{\longrightarrow} \infty, &&\\
\cfrac{\lambda_T}{\sqrt{T}} T^{\frac{1}{2}[(1+\eta)(2-5c)-1]} \underset{T \rightarrow \infty}{\longrightarrow} \infty. &&\\
\end{array}
\eeqw
We deduce
\beqw
\Pb(\underset{k \in \Sc}{\cup} \underset{i \in \Ac^c_k}{\cup}\{ |\hat{\ww}^{(k)}_i| \geq 1\}) \underset{T \rightarrow \infty}{\longrightarrow} \eps,
\eeqw
for $T$ sufficiently large enough. We have then concluded the model selection consistency.

\mds

We now focus on the asymptotic normality. Model selection implies that
\beqw
\Pb(\{k \in \Sc, i \in \Ac_k, : \hat{\theta}^{(k)}_i \neq 0\} = \Ac) \underset{T \rightarrow \infty}{\longrightarrow} 1.
\eeqw
As a consequence, the next relationship holds
\beqw
\Pb(\forall k \in \Sc, \dot{\Gb}_T l(\hat{\theta})_{\Ac_k} + \lambda_T T^{-1} \alpha_{T,\Ac_k} \odot \text{sgn}(\hat{\theta}_{\Ac_k}) + \gamma_T T^{-1} \xi_{T,k} \cfrac{\hat{\theta}_{\Ac_k}}{\|\hat{\theta}_{\Ac_k}\|_2} = 0) \underset{T \rightarrow \infty}{\longrightarrow} 1.
\eeqw
By a Taylor expansion of the gradient term around $\theta_{0,\Ac}$, we obtain
\beqw
\begin{array}{llll}
\Pb(\dot{\Gb}_T l(\theta_0)_{\Ac} +  \ddot{\Gb}_T l(\theta_0)_{\Ac \Ac} (\hat{\theta}_{\Ac} - \theta_{0,\Ac}) & + & \cfrac{1}{2} \nabla' \{ (\hat{\theta}_{\Ac} - \theta_{0,\Ac})'  \ddot{\Gb}_T l(\bar{\theta})_{\Ac \Ac} (\hat{\theta}_{\Ac} - \theta_{0,\Ac})\} \\
& + & \lambda_T T^{-1} \alpha_{T,\Ac} \odot \text{sgn}(\hat{\theta}_{\Ac}) + \gamma_T T^{-1} \eta_T = 0) \underset{T \rightarrow \infty}{\longrightarrow} 1,
\end{array}
\eeqw
where $\eta_T = \text{vec}(\xi_{T,k} \cfrac{\hat{\theta}_{\Ac_k}}{\|\hat{\theta}_{\Ac_k}\|_2}, k \in \Sc)$ and $\|\bar{\theta} - \theta_0\|_2 \leq \|\hat{\theta}-\theta_0\|_2$. As a consequence, we have
\beqw
\begin{array}{llll}
\Pc(\theta_0) (\hat{\theta}_{\Ac} - \theta_{0,\Ac}) + \Hb_{T,\Ac \Ac} (\hat{\theta}_{\Ac} - \theta_{0,\Ac}) & = & - \dot{\Gb}_T l(\theta_0)_{\Ac} - \cfrac{1}{2} \nabla' \{(\hat{\theta}_{\Ac} - \theta_{0,\Ac}) \ddot{\Gb}_T l(\bar{\theta})_{\Ac \Ac} (\hat{\theta}_{\Ac} - \theta_{0,\Ac}) \}\\
& - & \lambda_T T^{-1} \alpha_{T,\Ac} \odot \text{sgn}(\hat{\theta}_{\Ac}) - \gamma_T T^{-1} \eta_T + o_p(1),
\end{array}
\eeqw
where $\Pc(\theta_0) = \ddot{\Gb}_T l(\theta_0)_{\Ac \Ac} - \Hb_{T,\Ac \Ac}$ and $\Hb_{T,\Ac \Ac} = \Eb[\nabla^2_{\theta \theta'} l(\eps_t;\theta_0)]_{\Ac \Ac}$. Then multiplying by $\sqrt{T} Q_T \Vb^{-1/2}_{T,\Ac \Ac}$, we obtain
\beqw
\begin{array}{llll}
\sqrt{T}Q_T \Vb^{-1/2}_{T,\Ac \Ac} (\hat{\theta}_{\Ac} - \theta_{0,\Ac}) & = & -\sqrt{T} Q_T \Vb^{-1/2}_{T,\Ac \Ac} \Hb^{-1}_{T,\Ac \Ac} (\lambda_T T^{-1} \alpha_{T,\Ac} \odot \text{sgn}(\hat{\theta}_{\Ac})
 + \gamma_T T^{-1} \eta_T) \\
& - & \sqrt{T} Q_T \Vb^{-1/2}_{T,\Ac \Ac} \Hb^{-1}_{T,\Ac \Ac} \dot{\Gb}_T l(\theta_0)_{\Ac}  \\
& - & \sqrt{T}/2 Q_T \Vb^{-1/2}_{T,\Ac \Ac} \Hb^{-1}_{T,\Ac \Ac}  \nabla' \{(\hat{\theta}_{\Ac} - \theta_{0,\Ac})' \ddot{\Gb}_T l(\bar{\theta})_{\Ac \Ac} (\hat{\theta}_{\Ac} - \theta_{0,\Ac}) \}  \\
& - & \sqrt{T} Q_T \Vb^{-1/2}_{T,\Ac \Ac} \Hb^{-1}_{T,\Ac \Ac}  \Pc(\theta_0) (\hat{\theta}_{\Ac} - \theta_{0,\Ac}) + o_p(1).
\end{array}
\eeqw
We focus on the $l^1$ penalty term, which can be upper bounded as
\beqw
\begin{array}{llll}
N_{1,T} := |\sqrt{T} Q_T \Vb^{-1/2}_{T,\Ac \Ac} \Hb^{-1}_{T,\Ac \Ac} (\cfrac{\lambda_T}{T} \alpha_{T,\Ac} \odot \text{sgn}(\hat{\theta}_{\Ac}))| & \leq & |Q_T \Vb^{-1/2}_{T,\Ac \Ac} | |\Hb^{-1}_{T,\Ac \Ac} | \lambda_T T^{-1/2} \underset{k \in \Sc, i \in \Ac_k}{\max} \, \alpha_{T,i,\Ac} \\
& \leq & |Q_T \Vb^{-1/2}_{T,\Ac \Ac}| \lambda^{-1}_{\min}(\Hb_{T,\Ac \Ac}) \lambda_T T^{-1/2} \{\underset{k \in \Sc, i \in \Ac_k}{\min} |\tilde{\tilde{\theta}}^{(k)}_i|\}^{-\eta} \\
& \leq & |Q_T \Vb^{-1/2}_{T,\Ac \Ac}| \lambda^{-1}_{\min}(\Hb_{T,\Ac \Ac}) \lambda_T T^{\kappa \eta - \frac{1}{2}}.
\end{array}
\eeqw
If $\lambda_T T^{\kappa \eta} \rightarrow 0$, then $N_{1,T} = o_p(1)$.

\mds

As for the $l^1/l^2$ penalty, it can be upper bounded as
\beqw
\begin{array}{llll}
N_{2,T} := |\sqrt{T} Q_T \Vb^{-1/2}_{T,\Ac \Ac} \Hb^{-1}_{T,\Ac \Ac} \cfrac{\gamma_T}{T} \eta_T| & \leq & |Q_T \Vb^{-1/2}_{T,\Ac \Ac}||\Hb^{-1}_{T,\Ac \Ac}| \gamma_T T^{-1/2} \|\eta_T\|_2 \\
& \leq & |Q_T \Vb^{-1/2}_{T,\Ac \Ac}||\Hb^{-1}_{T,\Ac \Ac}| \gamma_T T^{-1/2} \sqrt{\underset{k \in \Sc}{\sum} \|\tilde{\tilde{\theta}}^{(k)}\|^{-2\mu}_2} \\
& \leq & |Q_T \Vb^{-1/2}_{T,\Ac \Ac}| \lambda^{-1}_{\min}(\Hb_{T,\Ac \Ac}) \gamma_T T^{-1/2} d^{1/2}_T \{\underset{k \in \Sc}{\min}  \|\tilde{\tilde{\theta}}^{(k)}\|_2\}^{-\mu} \\
& \leq & |Q_T \Vb^{-1/2}_{T,\Ac \Ac}| \lambda^{-1}_{\min}(\Hb_{T,\Ac \Ac}) \gamma_T T^{-1/2} d^{1/2}_T T^{\kappa \mu}.
\end{array}
\eeqw
Using $d_T = O(T^c)$, if $\gamma_T T^{\frac{c-1}{2}+\kappa \mu} \rightarrow 0$, then $N_{2,T} = o_p(1)$. Consequently, we have $N_{1,T}+N_{2,T} = o_p(1)$.

\mds

We now turn to the hessian quantity of the Taylor expansion and prove the discrepancy $\Pc(\theta_0)$ converges uniformly to zero in probability. For any $\eps > 0$, by the Markov's inequality, we have
\beqw
\begin{array}{llll}
\Pb(\|\ddot{\Gb}_T l(\theta_0)_{\Ac \Ac} - \Hb_{T,\Ac \Ac}\|^2_2 > (\eps/d_T)^2) & \leq & \cfrac{d^2_T}{\eps^2 T^2} \Eb[ \underset{(k,l)\in \Ac}{\sum} \{\partial^2_{\theta_k \theta_l} l(\eps_t;\theta_0) - \Eb[\nabla^2_{\theta_k \theta_l} l(\eps_t;\theta_0)]\}^2 ] \\
& \leq & \cfrac{d^4_T}{\eps^2 T^2} \lambda_{\max}^2(\Hb_{T,\Ac \Ac}).
\end{array}
\eeqw

\mds

As for the third order term, by the Cauchy-Schwartz inequality
\beqw
\begin{array}{llll}
\|\nabla'\{(\hat{\theta}_{\Ac} - \theta_{0,\Ac})' \ddot{\Gb}_T l(\bar{\theta})_{\Ac \Ac} (\hat{\theta}_{\Ac} - \theta_{0,\Ac})\}\|^2_2 & \leq & \cfrac{1}{T^2} \overset{T}{\underset{t=1}{\sum}} \{ \underset{(k,l,m)\in \Ac}{\sum} \partial^3_{\theta_k \theta_l \theta_m} l^2_T(\eps_t;\bar{\theta}) \} \|\hat{\theta}_{\Ac} - \theta_{0,\Ac}\|^4_2 \\
& \leq & \cfrac{1}{T^2} \overset{T}{\underset{t=1}{\sum}}  \{ \underset{(k,l,m)\in \Ac}{\sum} \psi^2_T(\eps_t) \} \|\hat{\theta}_{\Ac} - \theta_{0,\Ac}\|^4_2 \\
& = & O_p(\cfrac{d^5_T}{T^2}) = o_p(\cfrac{1}{T}).
\end{array}
\eeqw

\mds

We now prove $X_{T,t} =  \sqrt{T} Q_T \Vb^{-1/2}_{T,\Ac \Ac} \Hb^{-1}_{T,\Ac \Ac} \dot{\Gb}_T l_t(\theta_0)_{\Ac}, t = 1,\cdots,T,$ is asymptotically normal by checking the Lindeberg-Feller's condition for applying Shiryaev's Theorem \ref{Shishi}. We remind that $\dot{\Gb}_T l_{T,t}(\theta_0)$ is the $t$-th point of the score of the empirical criterion. Let $\beta > 0$, and to use Shiryaev's Theorem, we need to prove that for any $\eps > 0$, we have
\beqw
\Pb(\overset{T}{\underset{t=0}{\sum}} \Eb[\|X_{T,t}\|^2_2 \mathbf{1}_{\|X_{T,t}\|_2 > \beta}| \Fc^T_{t-1} ]  > \eps) \underset{T \rightarrow \infty}{\longrightarrow} 0.
\eeqw
By the Markov inequality, we obtain
\beqw
\begin{array}{llll}
\Pb(\overset{T}{\underset{t=0}{\sum}} \Eb[\|X_{T,t}\|^2_2 \mathbf{1}_{\|X_{T,t}\|_2 > \beta}| \Fc^T_{t-1} ]  > \eps) & \leq & \frac{1}{\eps} \overset{T}{\underset{t=0}{\sum}} \Eb[\|X_{T,t}\|^2_2 \mathbf{1}_{\|X_{T,t}\|_2 > \beta}| \Fc^T_{t-1} ] \\
& \leq & \frac{1}{\eps} \overset{T}{\underset{t=0}{\sum}} \Eb[ \Eb[\|X_{T,t}\|^4_2| \Fc^T_{t-1}]^{1/2} \Pb(\|X_{T,t}\|_2 > \beta | \Fc^T_{t-1})^{1/2}] \\
& \leq & \frac{1}{\eps} \overset{T}{\underset{t=0}{\sum}} \Eb[ \{\frac{C_{st}}{T^2}\Eb[\|\nabla l(\eps_t;\theta_0) \nabla' l(\eps_t;\theta_0)\|^2_2| \Fc^T_{t-1}]\}^{1/2} \\
& & . \frac{1}{\beta}\Eb[\|\sqrt{T} Q_T \Vb^{-1/2}_{T,\Ac \Ac} \Hb^{-1}_{T,\Ac \Ac} \dot{\Gb}_T l_t(\theta_0)_{\Ac}\|^2_2 | \Fc^T_{t-1}]^{1/2}],
\end{array}
\eeqw
with $C_{st} > 0$. First, let $\Kb_T = Q_T \Vb^{-1/2}_{T,\Ac \Ac} \Hb^{-1}_{T,\Ac \Ac}$, we have
\beqw
\begin{array}{llll}
\Eb[\|\sqrt{T} \Kb_T \dot{\Gb}_T l_t(\theta_0)_{\Ac}\|^2_2 | \Fc^T_{t-1}] & = & \cfrac{1}{T} \Eb[\nabla' l(\eps_t;\theta_0) \Kb'_T  \Kb_T \nabla l(\eps_t;\theta_0)| \Fc^T_{t-1}] \\
& = & \cfrac{1}{T} \Eb[\text{Trace}(\nabla' l(\eps_t;\theta_0) \Kb'_T  \Kb_T \nabla l(\eps_t;\theta_0))| \Fc^T_{t-1}] \\
& = & \cfrac{1}{T} \text{Trace}(\Eb[\nabla l(\eps_t;\theta_0) \nabla' l(\eps_t;\theta_0)| \Fc^T_{t-1}] \Kb'_T  \Kb_T ) \\
& \leq & \cfrac{1}{T}\lambda_{\max}(\Hb^T_{t-1}) \tilde{C}_{st},
\end{array}
\eeqw
where $\tilde{C}_{st} > 0$. Furthermore, we have
\beqw
\begin{array}{llll}
\Eb[\|\nabla l(\eps_t;\theta_0) \nabla' l(\eps_t;\theta_0)\|^2_2| \Fc^T_{t-1}] & = & \Eb[\overset{d_T}{\underset{i,j=0}{\sum}} \{\partial_{\theta_i} l(\eps_t;\theta_0) \partial_{\theta_j} l(\eps_t;\theta_0)\}^2| \Fc^T_{t-1}] \\
& \leq & d^2_T \underset{i,j = 1,\cdots,d_T}{\sup} \Eb[\{\partial_{\theta_i} l(\eps_t;\theta_0) \partial_{\theta_j} l(\eps_t;\theta_0)\}^2 | \Fc^T_{t-1} ].
\end{array}
\eeqw
By assumption \ref{AN}, we have
\beqw
\begin{array}{llll}
\Pb(\overset{T}{\underset{t=0}{\sum}} \Eb[\|X_{T,t}\|^2_2 \mathbf{1}_{\|X_{T,t}\|_2 > \beta}| \Fc^T_{t-1} ]  > \eps) && \\
\leq \cfrac{C^{\frac{1}{2}}_{st} \tilde{C}^{\frac{1}{2}}_{st} d_T}{T^{\frac{3}{2}}} \overset{T}{\underset{t=0}{\sum}} \Eb[  \underset{i,j = 1,\cdots,d_T}{\sup} \Eb[\{\partial_{\theta_i} l(\eps_t;\theta_0) \partial_{\theta_j} l(\eps_t;\theta_0)\}^2 | \Fc^T_{t-1} ] \lambda_{\max}(\Hb^T_{t-1}) ] \leq \cfrac{C^{\frac{1}{2}}_{st} \tilde{C}^{\frac{1}{2}}_{st} \bar{B} T d_T}{T^{\frac{3}{2}}} .&&
\end{array}
\eeqw
Consequently, we obtain
\beqw
\overset{T}{\underset{t=0}{\sum}} \Eb[\|X_{T,t}\|^2_2 \mathbf{1}_{\|X_{T,t}\|_2 > \beta} | \Fc^T_{t-1}] = o_p(1).
\eeqw
We deduce that $X_{T,t}$ satisfies the Lindeberg-Feller condition, and by Theorem \ref{Shishi}, $\sqrt{T} Q_T \Vb^{-1/2}_{T,\Ac \Ac} \Hb^{-1}_{T,\Ac \Ac} \dot{\Gb}_T l(\theta_0)_{\Ac}$ is asymptotically normally distributed. The asymptotic distribution of Theorem \ref{oracle_double} follows.

\end{proof}
\newpage
\begin{section}{Simulation Experiments} \label{Sim}

In this section, we carry out a simulation study to explore the finite sample performance of the adaptive Sparse Group Lasso. We first focus on the calibration of the adaptive weights entering the penalties. The regularization parameters must satisfy conditions to achieve the oracle property in the double asymptotic case. To do so, we suppose $\lambda_T = T^{\beta}$ and $\gamma_T = T^{\alpha}$, where $\beta$ and $\alpha$ are both strictly positive constant. Regarding assumption \ref{H13}, we obtain the conditions
\beqw
\left\{\begin{array}{llll}
\alpha + \frac{c}{2} + \kappa \mu - \frac{1}{2} < 0, &&\\
\alpha - \frac{1}{2} + \frac{1}{2}[(1+\mu)(1-c)-1] > 0, &&\\
\beta + \kappa \eta - \frac{1}{2} < 0, && \\
\beta - \frac{1}{2} + \frac{1}{2}[(1+\eta)(1-c)-1] > 0, &&\\
(1+\mu)[1-\frac{c}{2} - \kappa \eta - \beta] + \alpha - 1 > 0. &&
\end{array}\right.
\eeqw
This system allows for flexibility when choosing $\mu$ and $\eta$ once $\kappa,c,\alpha$ and $\beta$ are fixed. For instance, for $c=1/6$, $\kappa = 0.05$, $\alpha = 1/10$ and $\beta = 1/10$, then $\mu \in [0.4,6.3]$ and $\eta \in [0.6,7.9]$. If $\alpha = \beta = 1/5$ and for $c=1/6$ and $\kappa = 0.05$, then $\mu \in [0.4,4.3]$ and $\eta \in [0.3,5.9]$.

\mds

We consider 6 methods in the experiment: the Lasso (L), the Adaptive Lasso (AL), the Group Lasso (GL), the Adaptive Group Lasso (AGL), the Sparse Group Lasso (SGL) and the Adaptive Sparse Group Lasso (ASGL).

\mds

There are several methods to numerically solve the non-differentiable statistical problem (\ref{DA_SGL}). Fan and Li (2001) proposed a local quadratic approximation (LQA) of the first order derivative of the penalty function and a Newton-Raphson type algorithm. To circumvent numerical instability, they suggest to shrunk to zero coefficients that are close to zero, that is a coefficient $|\theta_j|< \eps$, with $\eps >0$ to be calibrated. The drawback is that once it is set to zero, it will be excluded at any step of the LQA algorithm. Hunter and Li (2005) proposed a more sophisticated version of the LQA algorithm to avoid the drawback of the stepwise selection and numerical instability. They also studied the convergence properties of the LQA method. Zou and Li (2008) proposed a local linear approximation (LLA) of the penalty function such that the estimated coefficients have naturally a sparse representation, under the condition that the penalty function enjoys the continuity condition. Zou (2006) or Zou and Zhang (2009) use the LQA algorithm for their empirical study. Other approaches are also possible such as gradient descent methods.

\mds

When one consider the OLS loss function, closed form algorithm can be applied to our problem. B\"uhlmann and van de Geer (2011) compiled these methodologies for solving the Lasso and the Group Lasso using gradient descent methods for general penalized convex empirical function. We used these algorithms in our study for solving the group LASSO. As for the LASSO, we applied the shooting algorithm developed by Fu (1998), which is a particular case of the gradient descent method. Simon and al. (2013) proposed an algorithm for solving the SGL that can accommodate likelihood criteria. This is a "two-step" method, where we first check whether the group is active, and then, if active, check if the coefficient within this group is active. In this simulation study, we used the alternative direction method of multipliers provided by Li and al. (2014).

\mds

We used a cross-validation procedure to select both parameters $\lambda_T$ and $\gamma_T$ such that both terms are defined by $\lambda_T = T^{\beta}$ and $\gamma_T = T^{\alpha}$, and $ \beta = \alpha = 1/8$. The adaptive weights are computed as follows: we first compute an OLS estimator $\tilde{\theta}$ such that the adaptive weights entering the penalties correspond to $\tilde{\tilde{\theta}} = \tilde{\theta} + T^{-\kappa}$, with $\kappa = 0.2$. As for the adaptive weights, they are chosen such that the above system is satisfied: we set $\eta = 3.5$ and $\mu = 2.5$.

\mds

We report the variable selection performance through the number of zero coefficients correctly estimated, denoted as $C$ and, the number of nonzero coefficients incorrectly estimated, denoted $IC$. Besides, the mean squared error is reported as an estimation accuracy measure.

\emph{Simulated experiment.} We consider a data generating process
\beqw
y = \underset{l}{\sum}\beta^{(l)}_0 \mathbf{X}^{(l)} + \sigma \eta,
\eeqw
where $\eta$ is a strong white noise, normally distributed, centered with unit variance and $\sigma = 0.3$. The matrices $\mathbf{X}^{(l)}$ follow $\cc_l$- dimensional multivariate normal distributions, centered and with variance covariance $\Sigma^{(l)}$ such that the entries are defined as $\Sigma^{(l)}_{ij} = \rho^{|i - j|}, 1 \leq j, i \leq \cc_l$. The correlation parameter $\rho$ is randomly chosen among $\{0.5,0.8,0.9\}$. Moreover, the dimension $d_T = [ x \times T^{1/6} ]$ with $T = 500, 2000, 4000$ and $x = 10,30,50$ respectively for the values of $T$. As $d_T = O(T^c)$ with $c=1/6$, we can multiply by $x$ to consider more realistic settings. The number of groups is defined as $N_g = 4$ (resp. $N_g = 8$, resp.  $N_g = 18$) for $n = 500$ (resp. for $n = 2000$, resp. for $n = 4000$) and the size of each of them is randomly chosen among $\{5,\cdots,30\}$. The number of active groups is defined as $|\Sc| = 2 a_T$ with $a_T = [N_g/3]$. Moreover, zero coefficients are randomly chosen among the whole vector $\beta$ for active groups, such that the total number of zeros -both the zero subvectors for inactive groups and zero components for active groups - matches the total number of inactive indices. The total number of active indices is defined as $|\Ac| = 3 b_T$ with $b_T=[ d_T/9 ]$. Finally, we generate the active indices among a uniform law $\Uc([0.1,0.99])$. Zou and Zhang (2009) experiment influenced our framework.

\begin{table}[h]\centering\caption{\emph{Model selection and precision accuracy based on 100 replications}}
\begin{tabular}{c c c c c c c c c}\hline \hline $T$ & $d_T$ & $N_g$ & $|\Sc|$ & $|\Ac|$ & Model & MSE & C & IC  \\
\hline
500 & 28 & 4 & 2 & 9 & Truth &  & 19 & 0 \\
    &   &    &    &   &  Lasso & 0.0178   &  13.13 &  0 \\
    &   &    &    &   &  aLasso & 0.0118  &  17.98  & 0  \\
    &   &    &    &   &  GLasso & 0.0146 & 12.77  &  0 \\
    &   &    &    &   &  AGLasso & 0.0129 &  13.57  & 0  \\
    &   &    &    &   &  SGL & 0.0183 &  12.97 &  0 \\
    &   &    &    &   &  ASGL & 0.0101 & 18.83  &  0 \\
    &   &    &    &   &      &    &   & \\

2000 & 106 & 8 & 4 & 33 & Truth &  & 73 & 0 \\
    &   &    &    &   &  Lasso & 0.0118 &  49.65  & 0  \\
    &   &    &    &   &  aLasso & 0.0103 & 70.95   & 0  \\
    &   &    &    &   &  GLasso & 0.0150 & 57.48   & 0  \\
    &   &    &    &   &  AGLasso & 0.0160 & 60.78  &  0 \\
    &   &    &    &   &  SGL & 0.0125 &  58.88  &  0 \\
    &   &    &    &   &  ASGL & 0.0095 & 72.70  & 0  \\
    &   &    &    &   &      &    &   & \\

4000 & 199 & 18 & 12 & 66 & Truth &  & 133 & 0 \\
    &   &    &    &   &  Lasso & 0.0105  &  87.17  &  0 \\
    &   &    &    &   &  aLasso & 0.0093 & 131.33   & 0  \\
    &   &    &    &   &  GLasso & 0.0140 &  113.42  &  0 \\
    &   &    &    &   &  AGLasso & 0.0150 & 113.17   & 0  \\
    &   &    &    &   &  SGL & 0.0102 & 98.92   &  0 \\
    &   &    &    &   &  ASGL & 0.0094 & 133  & 0  \\

\hline
\hline\end{tabular}
\end{table}

We can highlight some interesting remarks from this simulation study. First, the adaptive versions of the Lasso, the Group Lasso or the SGL outperfom their non adaptive versions. The difference is significant for the adaptive Lasso and the adaptive SGL. This is in line with the asymptotic theory. The adaptive SGL performs well as it can discard inactive groups and inactive indices among active groups and outperform other adaptive penalization methods.

\end{section}

\newpage
{\bf Acknowledgements.} I would like to thank Jean-Michel Zako\"ian and Christian Francq for all the theoretical references they provided. And I thank warmly Jean-David Fermanian for his significant help and helpful comments. I finally gratefully acknowledge the Ecodec Laboratory.

\newpage

\end{document}